\documentclass[a4paper]{amsart}                

\addtolength{\textwidth}{2.5cm} \addtolength{\hoffset}{-1.25cm}
\addtolength{\textheight}{2.5cm} \addtolength{\voffset}{-1.25cm}

\usepackage[latin1]{inputenc}                  
\usepackage[T1]{fontenc}                       
\usepackage[spanish,english]{babel}            
\usepackage{amsmath,amssymb,amsthm}            
\usepackage{latexsym}                          
\usepackage{delarray}                          
\usepackage{bbm}                               
\usepackage{hyperref}                          
\usepackage[pdftex,usenames,dvipsnames]{color} 
\usepackage{bbding,trfsigns}                   
\usepackage{wasysym}                           
\usepackage{datetime}                          

\DeclareMathAlphabet{\mathpzc}{OT1}{pzc}{m}{it} 



\newtheorem{Th}{Theorem}[section]              

\newtheorem{Prop}[Th]{Proposition}

\newcommand{\B}{\mathbb{B}}
\newcommand{\E}{\mathbb{E}}
\newcommand{\G}{\Gamma}
\newcommand{\LL}{\mathcal{L}}
\newcommand{\R}{{\mathbb{R}^n}}
\newcommand{\T}{\mathbb{T}}
\newcommand{\Y}{\mathcal{Y}}
\DeclareMathOperator{\supp}{supp}
\DeclareMathOperator{\spann}{span}


\title[$\gamma$-radonifying operators in the Hermite setting on BMO and Hardy spaces]
      {$\gamma$-radonifying operators and UMD-valued Littlewood-Paley-Stein functions in the Hermite setting on BMO and Hardy spaces}

\author[J.J. Betancor]{J.J. Betancor}
\author[A.J. Castro]{A.J. Castro}
\author[J. Curbelo]{J. Curbelo}
\author[J.C. Fari\~na]{J.C. Fari\~na}
\author[L. Rodr\'{\i}guez-Mesa]{L. Rodr\'{\i}guez-Mesa}
\address{\newline
        Jorge J. Betancor, Alejandro J. Castro, Juan C. Fari\~na and Lourdes Rodr\'{\i}guez-Mesa \newline
        Departamento de An\'alisis Matem\'atico,
        Universidad de la Laguna, \newline
        Campus de Anchieta, Avda. Astrof\'{\i}sico Francisco S\'anchez, s/n, \newline
        38271, La Laguna (Sta. Cruz de Tenerife), Spain}
\email{jbetanco@ull.es, ajcastro@ull.es, jcfarina@ull.es, lrguez@ull.es}

\address{\newline
        Jezabel Curbelo \newline
        Instituto de Ciencias Matem\'aticas (CSIC-UAM-UCM-UC3M), \newline
        Nicol\'as Cabrera, 13-15, \newline
        28049, Madrid, Spain}
\email{jezabel.curbelo@icmat.es}

\keywords{$\gamma$-radonifying operators,
                                          BMO,
                                          Hardy spaces,
                                          Hermite operator,
                                          Littlewood-Paley-Stein functions,
                                          UMD Banach spaces}

\subjclass[2010]{46E40, 42A50}

\thanks{The authors are partially supported by MTM2010/17974.
        The second author is also supported by a FPU grant from the Government of Spain.
        The third author is supported by a grant JAE-Predoc of the CSIC (Spain).}


\begin{document}


  \maketitle                

  \begin{abstract}
    In this paper we study Littlewood-Paley-Stein functions associated with the Poisson semigroup for the Hermite
    operator on functions with values in a UMD Banach space $\B.$ If we denote by $H$ the Hilbert space
    $L^2((0,\infty),dt/t),$ $\gamma(H,\B)$ represents the space of $\gamma$-radonifying operators from $H$ into $\B.$
    We prove that the Hermite square function defines bounded operators from $BMO_\mathcal{L}(\R,\B)$
    (respectively, $H^1_\mathcal{L}(\R, \B)$) into $BMO_\mathcal{L}(\R,\gamma(H,\B))$
    (respectively, $H^1_\mathcal{L}(\R, \gamma(H,\B))$), where $BMO_\mathcal{L}$ and $H^1_\mathcal{L}$
    denote $BMO$ and Hardy spaces in the Hermite setting. Also, we obtain equivalent norms in
    $BMO_\mathcal{L}(\R, \B)$ and $H^1_\mathcal{L}(\R,\B)$ by using Littlewood-Paley-Stein functions.
    As a consequence of our results, we establish new characterizations of the UMD Banach spaces.
  \end{abstract}

    \section{Introduction} \label{sec:intro}

    The Littlewood-Paley-Stein $g$-function associated with the classical Poisson semigroup $\{ P_t\}_{t>0}$ is given by
    \begin{equation*}\label{1.0}
        g(\{ P_t\}_{t>0})(f)(x)=\left( \int_0^\infty |t\partial_t P_t f(x)|^2 \frac{dt}{t}\right)^{1/2}, \quad x \in \R.
    \end{equation*}
    It is well-known that this $g$-function defines an equivalent norm in $L^p(\R)$, $1<p<\infty.$ Indeed, for every $1<p<\infty$ there exists $C_p>0$ such that
    \begin{equation}\label{1.1}
        \frac{1}{C_p}\|f\|_{L^p(\R)}\leq \|g(\{ P_t\}_{t>0})(f)\|_{L^p(\R)}\leq C_p \|f\|_{L^p(\R)}, \quad f\in L^p(\R).
    \end{equation}
    Equivalence \eqref{1.1} is useful, for instance, to study $L^p$-boundedness properties of certain type of spectral multipliers.\\

    In \cite{Ste} $g$-functions associated with diffusion semigroups $\{ T_t\}_{t>0}$ on
    the measure space $(\Omega,\mu)$ were considered. In this general case \eqref{1.1} takes the following form, for every $1<p<\infty$,
    $$ \frac{1}{C_p}\|f-E_0(f)\|_{L^p(\Omega,\mu)}
        \leq \|g(\{ T_t\}_{t>0})(f)\|_{L^p(\Omega,\mu)}
        \leq C_p \|f\|_{L^p(\Omega,\mu)},
        \quad f\in L^p(\Omega,\mu),$$
    where $C_p>0.$ Here $E_0$ is the projection onto the fixed point space of $\{ T_t\}_{t>0}.$\\

    Suppose that $\B$ is a Banach space. For every $1<p<\infty,$ we denote by $L^p(\R,\B)$ the $p$-Bochner-Lebesgue space.
    The natural way of extending the definition of $g(\{ P_t\}_{t>0})$ to $L^p(\R,\B),$ $1<p<\infty,$ is the following
    $$g_\B(\{ P_t\}_{t>0})(f)(x)=\left( \int_0^\infty \|t\partial_t P_t f(x)\|_\B^2 \frac{dt}{t}\right)^{1/2}, \quad f\in L^p(\R,\B), \ 1<p<\infty.$$
    Kwapie{\'n} in \cite{Kw} proved that $\B$ is isomorphic to a Hilbert space if only if
    \begin{equation}\label{1.2}
        \|f\|_{L^p(\R,\B)}\sim~\|g_\B(\{P_t \}_{t>0})(f)\|_{L^p(\R)},\quad f\in L^p(\R,\B),
    \end{equation}
    for some (or equivalently, for any) $1<p<\infty.$\\

    Xu \cite{Xu} considered generalized $g$-functions defined by
    $$ g_{\B,q}(\{P_t\}_{t>0})(f)(x)=\left( \int_0^\infty \|t\partial_tP_t(f)(x)\|_\B^q \frac{dt}{t}\right)^{1/q}, \quad f\in L^p(\R,\B),  \ 1<p<\infty,$$
    where $1<q<\infty.$ He characterized those Banach spaces $\B$ for which one of the following inequalities holds
    \begin{itemize}
        \item $ \displaystyle \|g_{\B,q}(\{P_t\}_{t>0})(f)\|_{L^p(\R)}\leq C\|f\|_{L^p(\R,\B)}, \quad f\in L^p(\R,\B),  \ 1<p<\infty, $\\

        \item $ \displaystyle \|f\|_{L^p(\R,\B)}\leq C \|g_{\B,q}(\{P_t \}_{t>0})(f)\|_{L^p(\R)}, \quad f\in L^p(\R,\B),  \ 1<p<\infty.$
    \end{itemize}
    The validity of these inequalities is characterized by the martingale type or cotype of the Banach space $\B.$\\

    Xu's results were extended to diffusion semigroups by Mart\'{\i}nez, Torrea and Xu \cite{MTX}.\\

    In order to get new equivalent norms in $L^p(\R,\B)$ for a wider class of
    Banach spaces, Hyt\"onen \cite{Hy1} and Kaiser and Weis (\cite{K} and \cite{KW1}) have introduced new definitions
    of $g$-functions for Banach valued functions.\\

    In this paper we are motivated by the ideas developed by Kaiser and Weis (\cite{K} and \cite{KW1}).
    They defined $g$-functions for Banach valued functions by using $\gamma$-radonifying operators.\\

    The main definitions and properties about $\gamma$-radonifying operators can be found in \cite{Nee}.
    We now recall those aspects of the theory of $\gamma$-radonifying operators that will be useful in the sequel.
    We consider the Hilbert space $H=L^2((0,\infty),dt/t)$. Suppose that
    $(e_k)_{k=1}^\infty$ is an orthonormal basis in $H$ and $(\gamma_k)_{k=1}^\infty$ is a sequence of independent standard Gaussian random variables on a    probability space $(\Omega,\mathbb{P})$.
    A bounded operator $T$ from $H$ into $\B$ is a $\gamma-$radonifying operator, shortly
    $T \in \gamma(H,\B),$ when $\sum_{k=1}^\infty \gamma_k T e_k$ converges in $L^2(\Omega,\B).$ We define the norm
    $\|T\|_{\gamma(H,\B)}$ by
    $$ \|T\|_{\gamma(H,\B)}= \left( \mathbb{E}\left\|\sum_{k=1}^\infty \gamma_k Te_k\right\|_\B^2\right)^{1/2}.$$
    This definition does not depend on the orthonormal basis $(e_k)_{k=1}^\infty$ of $H.$ $\gamma(H,\B)$ is a
    Banach space which is continuously   contained in the space $L(H,\B)$ of bounded operators from $H$ into $\B.$\\

    If $f:(0,\infty)\longrightarrow \B$ is a measurable function such that for every $S \in \B^*,$ the dual space of
    $\B,$ $S \circ f\in H,$ there exists $T_f \in L(H,\B)$ for which
    $$\langle S, T_f(h)\rangle_{\B^*,\B}=\int_0^\infty \langle S,f(t)\rangle_{\B^*,\B} h(t) \frac{dt}{t}, \quad h\in H \text{ and } S \in \B^*,$$
    where $\langle \cdot, \cdot \rangle_{\B^*,\B}$ denotes the duality pairing in $(\B^*,\B)$.
    When $T_f\in \gamma(H,\B)$ we say that $f\in \gamma(H,\B)$ and we write $\|f\|_{\gamma(H,\B)}$ to refer us to
    $\| T_f\|_{\gamma(H,\B)}.$\\

    The Hilbert transform $\mathcal{H}(f)$ of $f\in L^p(\mathbb{R}),$ $1\leq p <\infty,$ is defined by
    $$ \mathcal{H}(f)(x)=\frac{1}{\pi}\lim_{\varepsilon \rightarrow 0^+ } \int_{|x-y|>\varepsilon } \frac{f(y)}{x-y}, \quad \text{a.e. } x\in \mathbb{R}.$$
    The Hilbert transform $\mathcal{H}$ is defined on
    $L^p(\mathbb{R})\otimes \mathbb{B},$ $1\leq p<\infty$, in a natural way. We say that $\mathbb{B}$ is a UMD
    Banach space when for some (equivalent, for every) $1<p<\infty$ the Hilbert transformation can be extended from
    $L^p(\mathbb{R}, \mathbb{B})$ as a bounded operator from $L^p(\mathbb{R}, \mathbb{B})$ into itself.
    There exist many other characterizations of the UMD Banach spaces
    (see, for instance, \cite{AT}, \cite{Bou}, \cite{Bu}, \cite{GD}, \cite{HTV}, \cite{Hy}  and \cite{KW1}).
    Every Hilbert space is a UMD space and $\gamma(H,\B)$ is UMD provided that $\B$ is UMD.\\

    UMD Banach spaces are a suitable setting to establish Banach valued Fourier multiplier theorems
    (\cite{GW} and \cite{Hy1}). Convolution operators are closely connected with Fourier multipliers.
    Suppose that $\psi\in L^2(\mathbb{R}^n).$ We consider $\psi_t(x) =\frac{1}{t^n}\psi(x/t)$, $x\in \R$
    and $t>0.$ The wavelet transform $W_\psi$ associated with $\psi$ is defined by
    $$ W_\psi(f)(x,t)=(f* \psi_t)(x),\quad x\in \R \text{ and } t>0,$$
    where $f\in \mathcal{S}(\R,\B),$ the $\B$-valued Schwartz space.\\

    In \cite[Theorem 4.2]{KW1} Kaiser and Weis gave sufficient conditions for $\psi$ in order to
    \begin{equation}\label{1.3}
    \|W_\psi f\|_{E(\R,\gamma(H,\B))}\sim \|f\|_{E(\R,\B)},
    \end{equation}
    for every $f\in E(\R,\mathbb{B})$, where $\mathbb{B}$ is a UMD Banach space and $E$ represents
    $L^p,$ $1<p<\infty,$ $H^1$ or $BMO.$ Here, as usual, $H^1$ and $BMO$ denote the Hardy spaces and the space of
    bounded mean oscillation functions, respectively.\\

    If $P(x)= \G((n+1)/2)/\pi^{(n+1)/2} (1+|x|^2)^{-(n+1)/2},$ $ x\in \R,$ then
    $P_t(x)=\frac{1}{t^n}P(\frac{x}{t}),$ $x\in \R$ and $t>0,$ is the classical Poisson kernel. By taking
    $ \psi(x)=\partial_t P_t(x)_{|t=1},$ $x\in \R,$ we have that
    $$ W_\psi(f)(x,t)=t\partial_t P_t(f)(x), \quad x\in \R \text{ and } t>0.$$
    Moreover, $\gamma(H,\mathbb{C})=H$ and $\gamma(H,\mathbb{H})=L^2((0,\infty), dt/t; \mathbb{H}),$ provided
    that $\mathbb{H}$ is a Hilbert space \cite[p. 3]{Nee}. Then, when $E=L^p,$ $1<p<\infty,$ \eqref{1.3} can be
    seen as a Banach valued extension of \eqref{1.1} and \eqref{1.2}.\\

    Also, in \cite[Remark 4.6]{KW1} UMD Banach spaces are characterized by using wavelet transforms.\\

    Harmonic analysis associated with the harmonic oscillator (also called Hermite) operator $L=-\Delta+|x|^2$ on
    $\R$ has been developed in last years by several authors
    (see \cite{AT}, \cite{BFRST1}, \cite{ST1}, \cite{ST2}, \cite{ST3}, \cite{Th}   and  \cite{Th1}, amongst others).
    Littlewood-Paley $g$-functions in the Hermite setting were analyzed in \cite{ST2} for scalar functions and in
    \cite{BFRST2} for Banach valued functions. Motivated by the ideas developed by Kaiser and Weis \cite{KW1},
    the authors in \cite[Theorem 1]{BCCFR} established new equivalent norms for the Bochner-Lebesgue space $L^p(\R,\B)$ by using
    Littlewood-Paley functions associated with Poisson semigroups for the Hermite operator and $\gamma-$radonifying
    operators, provided that $\B$ is a UMD space. Our objectives in this paper are the following ones:
    \begin{itemize}
    \item[$(a)$] To obtain equivalent norms for the $\B$-valued Hardy space $H^1_\mathcal{L}(\R,\B)$ and $BMO_\mathcal{L}(\R,\B)$
    associated to the Hermite operator, when $\B$ is a UMD Banach space, and
    \item[$(b)$] To characterize the UMD  Banach spaces in terms of $H_\mathcal{L}^1(\R,\B)$ and $BMO_{\mathcal{L}}( \R,\B),$
    by using Littlewood-Paley functions for the Poisson semigroup in the Hermite context and $\gamma-$radoni\-fying
    operators.\\
    \end{itemize}

    We recall some definitions and properties about the Hermite setting. For every $k\in \mathbb{N}$ the $k$-th
    Hermite function is $h_k(x)=(\sqrt{\pi}2^k k!)^{-1/2}H_k(x)e^{-x^2/2},$ $x\in \mathbb{R},$ where $H_k$ represents
    the $k$-th Hermite polynomial \cite[p. 60]{Leb}. If $k=(k_1,\dots,k_n)\in \mathbb{N}^n$ the $k$-th multidimensional
    Hermite function $h_k$ is defined by
    $$ h_k(x)=\prod_{j=1}^n h_{k_j}(x_j), \quad x=(x_1,\dots,x_n)\in \R,$$
    and we have that
    $$Lh_k=(2|k|+n)h_k,$$
where $|k|=k_1+...+k_n$.
    The system $\{ h_k\}_{k\in \mathbb{N}^n}$ is a complete orthonormal system for $L^2(\R).$ We define,
    the operator $\mathcal{L}$ as follows
    $$ \mathcal{L} f=\sum_{k\in \mathbb{N}^n} (2|k|+n)\langle f, h_k \rangle h_k, \quad f\in D(\mathcal{L}),$$
    where the domain $D(\mathcal{L})$ is constituted by all those $f\in L^2(\R)$ such that
    $\sum_{k\in \mathbb{N}^n}(2|k|+n)^2 |\langle f,h_k\rangle |^2 <\infty.$ Here $\langle \cdot, \cdot \rangle$ denotes
    the usual inner product in $L^2(\R).$ It is clear that if $\phi\in C_c^\infty(\R),$
    the space of smooth functions with compact support in $\R$, then $L\phi=\mathcal{L}\phi.$\\

    For every $t>0$ we consider the operator $W_t^\mathcal{L}$ defined by
    $$ W_t^\mathcal{L}(f)=\sum_{k\in \mathbb{N}^n} e^{-t(2|k|+n)}\langle f, h_k\rangle h_k, \quad f\in L^2(\R).$$
    The family $\{ W_t^\mathcal{L}\}_{t>0}$ is a semigroup of operators generated by $-\mathcal{L}$ in
    $L^2(\R)$ which is usually called the heat semigroup associated to $\mathcal{L}.$ By taking into account
    the Mehler's formula \cite[(1.1.36)]{Th} we can write, for every $f\in L^2(\R),$
    $$ W_t^\mathcal{L}(f)(x)=\int_{\R} W_t^\mathcal{L}(x,y)f(y)dy, \quad x\in \R \text{ and } t>0,$$
    where, for every $x,y\in \R$ and $t>0$,
    $$ W_t^\mathcal{L}(x,y)
        = \left(\frac{e^{-2t}}{\pi(1-e^{-4t})}\right)^{n/2}\exp\left(-\frac{1}{4}\left(\frac{1+e^{-2t}}{1-e^{-2t}}|x-y|^2+ \frac{1-e^{-2t}}{1+e^{-2t}}|x+y|^2 \right) \right).$$
    The Poisson semigroup $\{ P_t^\mathcal{L}\}_{t>0}$ associated to $\mathcal{L}$,
    that is, the semigroup of operators generated by $-\sqrt{\mathcal{L}},$ can be written by using the subordination formula by
    \begin{equation}\label{1.3.1}
        P_t^\mathcal{L}(f)= \frac{t}{\sqrt{4\pi}}\int_0^\infty s^{-3/2}e^{-t^2/4s}W_s^\mathcal{L}(f)ds, \quad f\in L^2(\R) \text{ and } t>0.
    \end{equation}
    The families $\{ W_t^\LL\}_{t>0}$ and $\{ P_t^\mathcal{L}\}_{t>0}$ are also $C_0-$semigroups in $L^p(\R),$ for
    every $1<p<\infty$ (see \cite{Ste}), but they are not Markovian.\\

    In \cite{ST2} Stempak and Torrea studied the Littlewood Paley $g$-functions in the Hermite setting.
    They proved that the $g$-function defined by
    $$ g(\{ P_t^\LL\}_{t>0})(f)(x)=\left( \int_0^\infty |t\partial_t P_t^\LL f(x)|^2 \frac{dt}{t}\right)^{1/2}, \quad x \in \R,$$
    is bounded from $L^p(\R)$ into itself, when $1<p<\infty$ (\cite[Theorem 3.2]{ST2}). Also, we have that
    \begin{equation}\label{1.4}
    \|f\|_{L^p(\R)}\sim \|g(\{P_t^\LL \}_{t>0})\|_{L^p(\R)},\quad f\in L^p(\R),
    \end{equation}
    (\cite[Proposition 2.3]{BFRTT}).\\

    From \cite[Theorems 1 and 2]{BFRST2} and \cite{Kw} we deduce that by defining, for every $1<p<\infty,$
    $$ g_\B(\{ P_t^\LL\}_{t>0})(f)(x)=\left(\int_0^\infty \| t\partial_t P_t^\LL f(x)\|_\B^2 \frac{dt}{t} \right)^{1/2}, \quad f\in L^p(\R,\B),$$
    then, for some (equivalently, for every) $1<p<\infty,$
    $$ \|f\|_{L^p(\R,\B)}\sim \|g_\B(\{ P_t^\LL\}_{t>0})(f) \|_{L^p(\R)}, \quad f\in L^p(\R,\B),$$
    if, and only if, $\B$ is isomorphic to a Hilbert space.\\

    We consider the operator $\mathcal{G}_{\LL,\B}$ defined by
    $$\mathcal{G}_{\LL,\B} (f)(x,t)=t\partial_t P_t^\LL (f)(x), \quad x\in \R \text{ and } t>0, $$
    for every $f\in L^p(\R,\B),$ $1\leq p<\infty.$\\

    In \cite{BCCFR} the authors proved that, for every $1<p<\infty,$
    \begin{equation}\label{1.5}
    \|f\|_{L^p(\R,\B)} \sim \| \mathcal{G}_{\LL,\B}(f)\|_{L^p(\R, \gamma(H,\B))},
    \end{equation}
    provided that $\B$ is a UMD Banach space. Since $\gamma(H,\mathbb{C})=H$, \eqref{1.5} can be seen as a
    Banach valued extension of \eqref{1.4}.\\

    Our first objective is to establish \eqref{1.5} when the space $L^p$ is replaced by the Hardy space $H^1$
    and the BMO space associated with the Hermite operator.\\

    Dziuba\'nski and Zienkiewicz \cite{DZ1} investigated the Hardy space $H^1_{\mathcal{S}_V}(\R)$ in the Schr�dinger context, where
    $\mathcal{S}_V=-\Delta + V$ and $V$ is a suitable positive potential.
    The Hermite operator is a special case of the Schr\"odinger operator.
    In \cite{DGMTZ} the dual space of $H^1_{\mathcal{S}_V}(\R)$ is characterized as the space $BMO_{\mathcal{S}_V}(\R)$ that is contained in the
    classical $BMO(\R)$ of bounded mean oscillation function in $\R.$ The results in \cite{DGMTZ} and \cite{DZ1} hold
    when the dimension $n$ is greater than 2, but when $V(x)=|x|^2$, $x\in \R,$ that is, when $\mathcal{S}_V=\LL$ the results in
    \cite{DGMTZ} and \cite{DZ1} about Hardy and BMO spaces hold for every dimension $n\geq 1.$\\

    We say that a function $f\in L^1(\R,\B)$ is in $H^1_\LL(\R,\B)$ when
    $$ \sup_{t>0}\|  W_t^\LL (f)\|_\B \in L^1(\R).$$
    As usual we consider on $H_\LL^1 (\R,\B)$ the norm $\| \cdot \|_{H^1_\LL(\R,\B)}$ defined by
    $$\| f\|_{H^1_\LL(\R,\B)}= \|\sup_{t>0}\|W_t^\LL(f) \|_\B \|_{L^1(\R)}, \quad f\in H^1_\LL(\R,\B).$$
    The dual space of $H_\LL^1(\R,\B)$ is the space $BMO_\LL(\R,\B^*)$ defined as follows, provided that $\B$
    satisfies the Radon-Nikod\'ym property (see \cite{Bl}). Note that every UMD space is reflexive (\cite[Proposition 2, p. 205]{RdF}) and therefore verifies
    the Radon-Nikod\'ym property (\cite[Corollary 13, p. 76]{DU}).
    A function $f\in L^1_{\text{loc}}(\R,\B)$ is in $BMO_\LL(\R,\B)$ if there exists $C>0$ such that
    \begin{itemize}
    \item[$(i)$] for every $a\in \R$ and $0<r<\rho(a)$
    $$ \frac{1}{| B(a,r)|}\int_{B(a,r)}\|f(z)-f_{B(a,r)}\|_B dz \leq C,$$
    where $f_{B(a,r)}= \frac{1}{B(a,r)}\int_{B(a,r)}f(z)dz,$ and
    \item[$(ii)$] for every $a\in\R$ and $r\geq \rho(a),$
    $$ \frac{1}{|B(a,r)|}\int_{B(a,r)}\|f(z)\|_\B dz \leq C.$$
    \end{itemize}
    Here $\rho$ is given by
    $$ \rho(x)=\left\{
        \begin{array}{ll}
            \dfrac{1}{1+|x|}, & |x|\geq 1\\
            \\
            \dfrac{1}{2}, &|x|<1
        \end{array}\right. .$$

    When $\B=\mathbb{C}$ we simply write $H^1_\LL(\R)$ and $BMO_\LL(\R)$, instead of $H^1_\LL(\R,\mathbb{C})$ or $BMO_\LL(\R,\mathbb{C})$,
    respectively.\\

    In \cite{BCFST} it was established a T1 type theorem that gives sufficient conditions in order that an operator
    is bounded between $BMO_\LL$ spaces.\\

    Suppose that $\B_1$ and $\B_2$ are Banach spaces and $T$ is a linear operator bounded from $L^2(\R, \B_1)$ into
    $L^2(\R,\B_2)$ such that
    $$ T(f)(x)=\int_\R K(x,y) f(y) dy, \quad x\not \in \supp f, \quad f\in L^\infty_c(\R, \B_1),$$
    where $K(x,y)$ is a bounded operator from $\B_1$ into $\B_2,$ for every $x,y\in \R,$ $x\neq y,$ and the
    integral is understood  in the $\B_2-$Bochner sense.\\

    As in \cite{BCFST} we say that $T$ is a $(\B_1,\B_2)$-Hermite-Calder\'on-Zygmund operator when the following two conditions
    are satisfied:
    \begin{itemize}
    \item[$(i)$] $\displaystyle \| K(x,y)\|_{L(\B_1, \B_2)}\leq C \frac{e^{-c(|x-y|^2+ |x||x-y|)}}{|x-y|^n}, \quad x,y\in \R, \ x\neq y,$
    \item[$(ii)$] $\displaystyle \| K(x,y)-K(x,z)\|_{L(\B_1, \B_2)}+ \| K(y,x)-K(z,x) \|_{L(\B_1, \B_2)}\leq C \frac{|y-z|}{|x-y|^{n+1}}, \quad |x-y|>2|y-z|$,
    \end{itemize}
    where $C,c>0$ and $L(\B_1, \B_2)$ denotes the space of bounded operators from $\B_1$ into $\B_2.$\\

    If $T$ is a Hermite-Calder\'on-Zygmund operator, we define the operator $\mathbb{T}$ on $BMO_\LL(\R, \B_1)$ as
    follows: for every $f\in BMO_\LL(\R, \B_1),$
    $$ \mathbb{T}(f)(x)
        =T(f\chi_{B})(x)+ \int_{\mathbb{R}^n \setminus B} K(x,y)f(y)dy,
        \quad \text{ a.e. } x\in B=B(x_0,r_0), \ x_0 \in \mathbb{R}^n \text{ and } r_0>0.$$
    This definition is consistent in the sense that it does not depend on $x_0$ or $r_0.$ Note that if $f\in BMO_\LL (\R, \B_1)$,
    $B=B(x_0,r_0),$ and $B^*=B(x_0,2r_0)$ where $x_0\in \R$ and $r_0>0,$ then
    $$ \mathbb{T} (f)(x)=T((f-f_B)\chi_{B^*})(x)+ \int_{\R \setminus B^*} K(x,y)(f(y)-f_B)dy +  \mathbb{T}(f_B)(x), \text{ a.e }x\in B^*.$$
    Note that if $f \in L^\infty_c(\R,\B_1)$ then $\mathbb{T}(f)=T(f)$.
    In Theorems \ref{Th1.1} and \ref{Th1.2} below we establish the boundedness of certain Banach valued
    Hermite-Calder\'on-Zygmund operators between $BMO_\LL$ spaces. When we say that an operator $T$ is bounded between
    $BMO_\LL$ spaces we always are speaking of the corresponding operator $\mathbb{T}$, although we continue writing
    $T.$ In order to show the boundedness of our operators in Banach valued $BMO_\LL$ spaces
    we will use a Banach valued version of \cite[Theorem 1.1]{BCFST} (see \cite[Remark 1.1]{BCFST}).

    \begin{Th}\label{Th1.0}
        Let $\B_1$ and $\B_2$ be Banach spaces. Suppose that $T$ is a $(\B_1,\B_2)$ Hermite-Calder\'on-Zygmund
        operator. Then, the operator $T$ is bounded from $BMO_\LL(\R,\B_1)$ into $BMO_\LL(\R,\B_2)$
        provided that there exists $C>0$ such that:
        \begin{itemize}
            \item[$(i)$] for every $b \in \B_1$ and $x \in \R$,
            $$\frac{1}{|B(x,\rho(x))|} \int_{B(x,\rho(x))} \|T(b)(y)\|_{\B_2} dy \leq C \|b\|_{\B_1},$$
            \item[$(ii)$] for every $b \in \B_1$, $x \in \R$ and $0<s \leq \rho(x)$,
            $$\left( 1 + \log \left( \frac{\rho(x)}{s} \right) \right)\frac{1}{|B(x,s)|}
                \int_{B(x,s)} \|T(b)(y) - (T(b))_{B(x,s)}\|_{\B_2} dy \leq C \|b\|_{\B_1},$$
            where $ \displaystyle (T(b))_{B(x,s)} = \frac{1}{|B(x,s)|} \int_{B(x,s)} T(b)(y) dy $.
        \end{itemize}
    \end{Th}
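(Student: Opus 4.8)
The plan is to verify, for an arbitrary $f\in BMO_\LL(\R,\B_1)$ normalized so that $\|f\|_{BMO_\LL(\R,\B_1)}=1$, the two conditions defining membership of $\mathbb{T}(f)$ in $BMO_\LL(\R,\B_2)$: the oscillation estimate on sub-critical balls $B(a,r)$ with $0<r<\rho(a)$, and the mean estimate on super-critical balls $B(a,r)$ with $r\geq\rho(a)$. The tools I would use are the $L^2$-boundedness of $T$, the size and regularity estimates $(i)$ and $(ii)$ of the kernel $K$, and a John--Nirenberg inequality for $BMO_\LL$ (available since in the Hermite case $BMO_\LL(\R,\B)$ is contained in the classical vector-valued $BMO(\R,\B)$), which supplies both the $L^2$ control $\big(|B|^{-1}\int_B\|f-f_B\|_{\B_1}^2\big)^{1/2}\leq C$ on every ball and the across-scales estimate $\|f_{B(a,r)}-f_{B(a,\rho(a))}\|_{\B_1}\leq C\big(1+\log(\rho(a)/r)\big)$ for $0<r<\rho(a)$. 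Combined with condition $(ii)$ of the definition of $BMO_\LL$ applied on $B(a,\rho(a))$, the latter yields $\|f_B\|_{\B_1}\leq C\big(1+\log(\rho(a)/r)\big)$ on sub-critical balls, which will be decisive below. I would also use the standard slowly-varying properties of the critical radius $\rho$. Since this statement is the Banach-valued counterpart of \cite[Theorem 1.1]{BCFST} singled out in \cite[Remark 1.1]{BCFST}, the whole argument parallels the scalar one with $|\cdot|$ replaced by $\|\cdot\|_{\B_i}$, the only structural inputs being integration inequalities and the triangle inequality.

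For the sub-critical case I would fix $B=B(a,r)$ with $0<r<\rho(a)$, write $B^*=B(a,2r)$, and use the decomposition recorded before the statement,
$$\mathbb{T}(f)=T\big((f-f_B)\chi_{B^*}\big)+\int_{\R\setminus B^*}K(\cdot,y)\big(f(y)-f_B\big)\,dy+\mathbb{T}(f_B),\quad\text{a.e. on }B^*.$$
The first term I would control by H\"older's inequality and the $L^2$-boundedness of $T$, bounding $|B|^{-1}\int_B\|T((f-f_B)\chi_{B^*})\|_{\B_2}$ by $C|B|^{-1/2}\big(\int_{B^*}\|f-f_B\|_{\B_1}^2\big)^{1/2}$ and then invoking John--Nirenberg. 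For the second term I would subtract its value at the centre $a$ and use the regularity estimate $(ii)$ of $K$, splitting $\R\setminus B^*$ into dyadic annuli; the gain $|y-a|^{-(n+1)}$ dominates the at most logarithmic growth of the annular means of $\|f-f_B\|_{\B_1}$, so the resulting oscillation is $\leq C$. The third term is where hypothesis $(ii)$ of the theorem is used: since $f_B$ is a constant $b\in\B_1$ one has $\mathbb{T}(f_B)=T(b)$, and $(ii)$ gives $|B|^{-1}\int_B\|T(b)-(T(b))_B\|_{\B_2}\leq C\|f_B\|_{\B_1}\big(1+\log(\rho(a)/r)\big)^{-1}$, which together with $\|f_B\|_{\B_1}\leq C(1+\log(\rho(a)/r))$ is $\leq C$. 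This cancellation of the logarithm is the conceptual heart of the argument and is exactly what the weight in $(ii)$ is designed to produce.

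For the super-critical case I would fix $B=B(a,r)$ with $r\geq\rho(a)$ and now estimate the plain mean $|B|^{-1}\int_B\|\mathbb{T}(f)\|_{\B_2}$, using the same three-term decomposition. The near term is handled as above by the $L^2$-boundedness of $T$ together with the John--Nirenberg bound $\int_{B^*}\|f-f_B\|_{\B_1}^2\leq C|B^*|$ and the estimate $\|f_B\|_{\B_1}\leq C$ coming from condition $(ii)$ of $BMO_\LL$ on the super-critical ball $B$. For the far term I would exploit the feature that distinguishes the Hermite kernel from the classical one, namely the Gaussian factor $e^{-c(|x-y|^2+|x||x-y|)}$ in the size estimate $(i)$ of $K$: decomposing $\R\setminus B^*$ into dyadic annuli and bounding the means of $\|f-f_B\|_{\B_1}$ on them uniformly by $C$ at super-critical scales, the Gaussian decay produces a convergent geometric series bounded by $C$. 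Finally, the constant term $\mathbb{T}(f_B)=T(b)$ is controlled by hypothesis $(i)$ of the theorem: covering $B(a,r)$ by a bounded-overlap family of critical balls $B(x_i,\rho(x_i))$ and applying $(i)$ on each, the slowly-varying property of $\rho$ gives $|B|^{-1}\int_B\|T(b)\|_{\B_2}\leq C\|b\|_{\B_1}=C\|f_B\|_{\B_1}\leq C$.

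I expect the main technical obstacle to lie in the super-critical estimates, where one must weigh the growth that $BMO_\LL$ permits for the means of $f$ against the precise exponential decay of the Hermite kernel and the geometry of the critical radius function $\rho$ (slow variation and the bounded-overlap covering by critical balls); getting the far-term series and the covering argument to close uniformly in $a$ and $r$ is the delicate point. By contrast, the sub-critical logarithmic cancellation, though it is the crux conceptually, reduces to the clean interplay between hypothesis $(ii)$ and the across-scales control of $f_B$, and is comparatively routine once John--Nirenberg is in hand.
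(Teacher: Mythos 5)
Your proposal is correct and follows essentially the route the paper itself indicates: the paper gives no independent proof of Theorem \ref{Th1.0}, stating only that it ``can be proved in the same way as \cite[Theorem 1.1]{BCFST}'' (see \cite[Remark 1.1]{BCFST}), and your three-term decomposition on $B^*$, the $L^2$-boundedness plus John--Nirenberg treatment of the local part, the annular estimates via the kernel bounds, the cancellation of $\|f_B\|_{\B_1}\leq C(1+\log(\rho(a)/r))$ against the logarithmic weight in hypothesis $(ii)$, and the covering of super-critical balls by critical balls for hypothesis $(i)$ are exactly the ingredients of that scalar argument, transplanted with $\|\cdot\|_{\B_i}$ in place of $|\cdot|$. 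Your own caveat about the super-critical far term is well placed: there the factor $e^{-c|x||x-y|}$ (equivalently, reduction to balls of radius comparable to $\rho(a)$ via the bounded-overlap critical covering) is genuinely needed, since the Gaussian factor $e^{-c|x-y|^2}$ alone does not give a uniformly bounded series when $\rho(a)$ is small.
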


    This result can be proved in the same way as \cite[Theorem 1.1]{BCFST}. In some special cases the conditions
    $(i)$ and $(ii)$  reduce to simpler forms. For instance, if $T(b)=\widetilde{T}(1)b$, $b \in \B_1$, where
    $\widetilde{T}$ is a $(\mathbb{C}, L(\B_1,\B_2))$ operator (where $(\mathbb{C}, L(\B_1,\B_2))$ has the
    obvious meaning) then properties $(i)$ and $(ii)$ are satisfied provided that $\widetilde{T}(1) \in L^\infty(\R,L(\B_1,\B_2))$
    and $\nabla \widetilde{T}(1) \in L^\infty(\R, L(\B_1,\B_2))$.\\

    We denote by $\{ P_t^{\LL + \alpha}\}_{t>0}$ the Poisson semigroup associated with the operator $\LL+\alpha,$
    when $\alpha>-n.$ We can write
    $$ P_t^{\LL+ \alpha}(f)= \frac{t}{\sqrt{4\pi}}\int_0^\infty s^{-3/2}e^{-t^2/(4s)}e^{-\alpha s}W_s^\LL(f)ds.$$
    The operator $\mathcal{G_{\LL+\alpha,\B}}$ is defined by
    $$\mathcal{G}_{\LL+\alpha, \B}(f)(x,t)=t\partial_t P_t^{\LL+\alpha}(f)(x),\quad x\in \mathbb{R}^n \mbox{ and }t>0.$$

    Our first result is the following one.

    \begin{Th}\label{Th1.1}
        Let $\B$ be a UMD Banach space and $\alpha>-n$. Then, if $E$ represents $H^1_\LL$ or $BMO_\LL$ we have that
        $$ \|f\|_{E(\R,\B)}\sim \| \mathcal{G}_{\LL+ \alpha,\B} (f)\|_{E(\R, \gamma(H,\B))}, \quad f\in E(\R,\B).$$
    \end{Th}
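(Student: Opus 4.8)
The plan is to deduce both the direct and the reverse inequalities, for both choices of $E$, from the boundedness of two closely related Hermite-Calder\'on-Zygmund operators together with a reproducing formula that is available precisely because $\alpha>-n$. The starting observation is that $\mathcal{G}_{\LL+\alpha,\B}(f)(x)$ is naturally an element of $\gamma(H,\B)$, the one induced by $t\mapsto t\partial_t P_t^{\LL+\alpha}(f)(x)$, and that the associated kernel factorizes as $K(x,y)=\kappa(x,y)\otimes\mathrm{Id}_\B$, where $\kappa(x,y)$ is the \emph{scalar} function $t\mapsto t\partial_t P_t^{\LL+\alpha}(x,y)\in H$. Since $\gamma(H,\mathbb{C})=H$, this gives $\|K(x,y)\|_{L(\B,\gamma(H,\B))}=\|\kappa(x,y)\|_H$ and reduces all kernel bounds to $H$-norm estimates for the scalar Poisson kernel of $\LL+\alpha$. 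The $L^2$-boundedness needed as the base hypothesis of the Calder\'on-Zygmund machinery is the case $p=2$ of \eqref{1.5}, valid for UMD $\B$.

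For the direct inequality with $E=BMO_\LL$ I would apply Theorem \ref{Th1.0} with $\B_1=\B$ and $\B_2=\gamma(H,\B)$. The size and smoothness conditions in the definition of Hermite-Calder\'on-Zygmund operator become $\|\kappa(x,y)\|_H\le C\,e^{-c(|x-y|^2+|x||x-y|)}/|x-y|^n$ and the corresponding Lipschitz bound in $H$-norm; both follow from the subordination formula \eqref{1.3.1} together with the Mehler kernel estimates for $W_s^\LL(x,y)$ and the extra factor $e^{-\alpha s}$. Because $\mathcal{G}_{\LL+\alpha,\B}(b)=\mathcal{G}_{\LL+\alpha}(1)\,b$ for constant $b\in\B$, the hypotheses $(i)$ and $(ii)$ of Theorem \ref{Th1.0} reduce, via the simplified criterion stated just after it, to checking $\mathcal{G}_{\LL+\alpha}(1)\in L^\infty(\R,H)$ and $\nabla_x\mathcal{G}_{\LL+\alpha}(1)\in L^\infty(\R,H)$, which is again a direct computation with the Poisson kernel.

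For the reverse inequality the key is the identity $\mathcal{G}^*_{\LL+\alpha,\B}\,\mathcal{G}_{\LL+\alpha,\B}=\tfrac14\,\mathrm{Id}$, where $\mathcal{G}^*_{\LL+\alpha,\B}(G)(x)=\int_0^\infty t\partial_t P_t^{\LL+\alpha}(G(\cdot,t))(x)\,dt/t$ is the formal adjoint. This follows by spectral calculus from $\int_0^\infty (t\lambda e^{-t\lambda})^2\,dt/t=\tfrac14$ applied to each eigenvalue $\lambda=\sqrt{2|k|+n+\alpha}$ of $\sqrt{\LL+\alpha}$; here $\alpha>-n$ guarantees $\lambda>0$ for all $k$, so there is no fixed-point projection to subtract and the reproducing formula is exact. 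Writing $f=4\,\mathcal{G}^*_{\LL+\alpha,\B}(\mathcal{G}_{\LL+\alpha,\B}f)$, the reverse inequality follows once $\mathcal{G}^*_{\LL+\alpha,\B}$ is bounded from $E(\R,\gamma(H,\B))$ into $E(\R,\B)$. For $E=BMO_\LL$ this is again Theorem \ref{Th1.0}, now with $\B_1=\gamma(H,\B)$, $\B_2=\B$: the bound $\|T_g(\kappa)\|_\B\le\|g\|_{\gamma(H,\B)}\|\kappa\|_H$, where $T_g\in L(H,\B)$ is the operator induced by $g$, shows that $\mathcal{G}^*$ satisfies the same kernel estimates, and the simplified criterion again reduces $(i),(ii)$ to the same bounds on $\mathcal{G}_{\LL+\alpha}(1)$.

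Finally, the two inequalities for $E=H^1_\LL$ I would obtain by duality. Since $\B$ is UMD it is reflexive, hence so is $\gamma(H,\B)$, and $(H^1_\LL(\R,\B))^*=BMO_\LL(\R,\B^*)$ with $\gamma(H,\B)^*=\gamma(H,\B^*)$. Thus the $H^1_\LL$-boundedness of $\mathcal{G}_{\LL+\alpha,\B}$ is equivalent to the $BMO_\LL$-boundedness of $\mathcal{G}^*_{\LL+\alpha,\B^*}$, and that of $\mathcal{G}^*_{\LL+\alpha,\B}$ to the $BMO_\LL$-boundedness of $\mathcal{G}_{\LL+\alpha,\B^*}$; both are covered by the previous paragraphs applied to the UMD space $\B^*$. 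I expect the main obstacle to be the kernel estimates themselves: controlling $\|\kappa(x,y)\|_H$ and its $H$-norm increments uniformly, together with the behaviour of $\mathcal{G}_{\LL+\alpha}(1)$ near the critical scale $\rho(x)$, requires a delicate treatment of the mixed exponent $|x-y|^2+|x||x-y|$ coming from the Mehler kernel once it is integrated in $L^2((0,\infty),dt/t)$; a secondary technical point is justifying the reproducing formula and the underlying pairings at the level of the $BMO_\LL$ extension $\mathbb{T}$ rather than on $L^2$.
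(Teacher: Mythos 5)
Your first half is sound and coincides with the paper's Subsection~\ref{subsec:2.1}: the identification $\|K(x,y)\|_{L(\B,\gamma(H,\B))}=\|t\partial_tP_t^{\LL+\alpha}(x,y)\|_H$, the Hermite--Calder\'on--Zygmund kernel estimates via subordination and the Mehler bounds, and the reduction of hypotheses $(i)$--$(ii)$ of Theorem~\ref{Th1.0} to $\mathcal{G}_{\LL+\alpha,\mathbb{C}}(1),\nabla_x\mathcal{G}_{\LL+\alpha,\mathbb{C}}(1)\in L^\infty(\R,H)$ are exactly what is done there. The spectral identity $\int_0^\infty(t\lambda e^{-t\lambda})^2\,dt/t=1/4$ with $\lambda=\sqrt{2|k|+n+\alpha}>0$ is likewise the algebraic core of the paper's Proposition~\ref{Prop2.1}. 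But there are two genuine gaps. First, the direct inequality for $E=H^1_\LL$ cannot be obtained ``by duality'' from $BMO_\LL$-boundedness of the adjoint: for $f$ in a dense class, bounding the pairings $\int\langle g,\mathcal{G}_{\LL+\alpha,\B}(f)\rangle$ over the unit ball of $BMO_\LL(\R,\gamma(H,\B^*))$ only exhibits $\mathcal{G}_{\LL+\alpha,\B}(f)$ as a bounded functional on $BMO_\LL$, i.e.\ as an element of $\bigl(H^1_\LL\bigr)^{**}$, which strictly contains $H^1_\LL$ since $H^1_\LL$ is not reflexive; membership in $H^1_\LL(\R,\gamma(H,\B))$ must be proved by other means. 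The paper does this directly: it develops a Banach-valued extension of the Dziuba\'nski--Zienkiewicz atomic theory (Propositions~\ref{Prop2.3} and~\ref{Prop2.4}), checks the action of $\mathcal{G}_{\LL+\alpha,\B}$ on the two types of atoms, and proves via Calder\'on--Zygmund theory that the maximal operator $f\mapsto\sup_{s>0}\|P_s^{\LL+\alpha}\mathcal{G}_{\LL+\alpha,\B}(f)(\cdot,\cdot)\|_{\gamma(H,\B)}$ maps $H^1_\LL(\R,\B)$ into $L^1(\R)$ (Proposition~\ref{Prop2.5}). None of this is dispensable, and your proposal contains no substitute for it.

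Second, what you call a ``secondary technical point'' --- validating $f=4\,\mathcal{G}^*_{\LL+\alpha,\B}\mathcal{G}_{\LL+\alpha,\B}(f)$ beyond $L^2$ --- is in fact the heart of the reverse inequalities. For $f\in BMO_\LL(\R,\B)$ the operator $\mathcal{G}_{\LL+\alpha,\B}(f)$ is only defined through the local extension $\mathbb{T}$, and the $L^2$ spectral identity does not transfer to compositions of such extensions; even if you prove $BMO_\LL$-boundedness of $\mathcal{G}^*_{\LL+\alpha,\B}$ via Theorem~\ref{Th1.0} (plausible, since the transposed kernel obeys the same $H$-norm bounds), the identity $\mathbb{T}_{\mathcal{G}^*}\mathbb{T}_{\mathcal{G}}=\frac14\,\mathrm{Id}$ on $BMO_\LL$ has to be verified by testing against $H^1_\LL$ atoms, which is precisely the paper's polarization identity (Proposition~\ref{Prop2.1}). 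Its proof is not routine: it needs the area integral $S_\alpha$ bounded from $H^1_\LL(\R)$ into $L^1(\R)$ (via the Hofmann--Lu--Mitrea--Mitrea--Yan result for $S_0$ plus an $L^1$ perturbation estimate for $S_\alpha-S_0$), the Carleson-type bound $I_\alpha(f)\in L^\infty(\R)$ (where the estimate $|\mathcal{G}_{\LL+\alpha,\mathbb{C}}(1)(x,t)|\leq C(t/\rho(x))^{1/2}$ near the critical radius is essential), justified interchanges of integration, and an $L^2$ limiting argument for the truncated reproducing integrals. Moreover, the paper's execution of the reverse $BMO_\LL$ inequality consumes the direct $H^1_\LL$ result (to know $\|\mathcal{G}_{\LL+\alpha,\B^*}(a)\|_{H^1_\LL(\R,\gamma(H,\B^*))}\leq C\|a\|_{H^1_\LL(\R,\B^*)}$ before invoking Proposition~\ref{Prop2.2}, whose hypothesis of absolute integrability of the pairing must itself be checked), so the gap in your $H^1_\LL$ argument propagates into your reverse inequalities as well.
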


    In order to establish our characterization for the UMD Banach spaces we introduce the operators $T_{j,\pm}^\LL,$ $j=1, \dots, n,$ defined as follows:
    $$ T_{j,\pm}^\LL (f)(x,t)= t\left(\partial_{x_j} \pm x_j \right) P_t^\LL (f)(x), \quad x\in \mathbb{R}^n \mbox{ and }t>0.$$

    In \cite[Theorem 2]{BCCFR} it was established that if $\B$ is a UMD Banach space then the operators
    $T_{j,\pm}^\LL$ are bounded from $L^p(\R,\B)$ into $L^p(\R,\gamma(H,\B))$, for every
    $1<p<\infty$ and $j=1, \dots, n$, provided that $n \geq 3$ in the case of $T_{j,-}^\LL$.\\

    The behavior of the operators $T_{j,\pm}^\LL$ on the spaces $H^1_\LL(\R,\B)$ and $BMO_\LL(\R,\B)$
    is now stated.

    \begin{Th}\label{Th1.2}
        Let $\B$ be a UMD Banach space and $j=1,\dots, n.$ By $E$ we represent the space $H^1_\LL$ or $BMO_\LL.$
        Then, the operators $T_{j,\pm}^\LL$ are bounded from $E(\R, \B)$ into $E(\R, \gamma(H, \B)),$
        provided that $n \geq 3$ in the case of  $T_{j,-}^\LL$.
    \end{Th}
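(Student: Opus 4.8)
The plan is to treat the spaces $BMO_\LL$ and $H^1_\LL$ separately: I would establish the $BMO_\LL$ estimate directly through the $T1$-type criterion of Theorem~\ref{Th1.0}, and then deduce the $H^1_\LL$ estimate by duality. Fix $j\in\{1,\dots,n\}$ and a choice of sign, and regard $T_{j,\pm}^\LL$ as a $(\B,\gamma(H,\B))$ operator, the Hilbert-space variable $t$ of $H=L^2((0,\infty),dt/t)$ carrying the $\gamma$-radonifying structure. To apply Theorem~\ref{Th1.0} with $\B_1=\B$ and $\B_2=\gamma(H,\B)$, I first need $T_{j,\pm}^\LL$ to be a $(\B,\gamma(H,\B))$-Hermite-Calder\'on-Zygmund operator. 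Its $L^2(\R,\B)\to L^2(\R,\gamma(H,\B))$ boundedness is exactly \cite[Theorem 2]{BCCFR} for $p=2$; this is where the restriction $n\geq 3$ for $T_{j,-}^\LL$ enters and then propagates through the whole argument.

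The kernel of $T_{j,\pm}^\LL$ is the $L(\B,\gamma(H,\B))$-valued function $K_{j,\pm}(x,y)$ sending $b\in\B$ to the map $t\mapsto t(\partial_{x_j}\pm x_j)P_t^\LL(x,y)\,b$. Since a function of the form $t\mapsto g(t)b$ defines a rank-one element of $\gamma(H,\B)$ of norm $\|g\|_H\|b\|_\B$, verifying the size and regularity conditions in the definition of a Hermite-Calder\'on-Zygmund operator reduces to the scalar estimate
$$\Big\| t\mapsto t(\partial_{x_j}\pm x_j)P_t^\LL(x,y)\Big\|_H \leq C\,\frac{e^{-c(|x-y|^2+|x||x-y|)}}{|x-y|^n},$$
together with the analogous bounds for the $x$- and $y$-gradients carrying the extra factor $|x-y|^{-1}$. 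I would obtain these by inserting the subordination formula \eqref{1.3.1} and Mehler's formula for $W_s^\LL(x,y)$, differentiating under the integral sign, and estimating the resulting integral in $(s,t)$; the Gaussian factors of Mehler's kernel produce exactly the required exponential decay once the $L^2(dt/t)$-norm is computed.

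It then remains to check hypotheses $(i)$ and $(ii)$ of Theorem~\ref{Th1.0}, which only involve the action of $T_{j,\pm}^\LL$ on a constant $b\in\B$. Writing $T_{j,\pm}^\LL(b)(y,t)=\phi_\pm(y,t)\,b$ with $\phi_\pm(y,t)=t(\partial_{y_j}\pm y_j)P_t^\LL(\mathbf 1)(y)$, and again using $\|\phi_\pm(y,\cdot)\,b\|_{\gamma(H,\B)}=\|\phi_\pm(y,\cdot)\|_H\|b\|_\B$, both hypotheses become scalar statements about $\|\phi_\pm(y,\cdot)\|_H$: an $L^\infty$-type bound on balls $B(x,\rho(x))$ and a logarithmically weighted oscillation estimate on balls $B(x,s)$ with $0<s\leq\rho(x)$. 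These follow from estimates for the Hermite-Poisson integral of the constant function $\mathbf 1$ and its first $y$-derivatives, localized through the function $\rho$. This is the step I expect to be the main obstacle: it is where the non-Markovian character of $\{P_t^\LL\}_{t>0}$ (which does not fix constants) and the interaction between the $\rho$-scale and the logarithmic weight of Theorem~\ref{Th1.0}$(ii)$ must be handled, and where the genuinely Hermite-specific, non translation-invariant analysis is concentrated.

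For the Hardy space case I would argue by duality. Since $\B$ is UMD it is reflexive, hence enjoys the Radon-Nikod\'ym property, so $(H^1_\LL(\R,X))^*=BMO_\LL(\R,X^*)$ applies both to $X=\B$ and to $X=\gamma(H,\B)$, the latter being UMD and therefore also reflexive. Moreover $\gamma(H,\B)^*=\gamma(H,\B^*)$ because $\B$, being UMD, is $K$-convex. The transpose $S$ of $T_{j,\pm}^\LL$ is a $(\gamma(H,\B^*),\B^*)$ operator whose kernel is obtained from $K_{j,\pm}$ by transposition; by symmetry of the Hermite-Poisson kernel its $L(\gamma(H,\B^*),\B^*)$-norm again equals the scalar quantity $\|t\mapsto t(\partial_{x_j}\pm x_j)P_t^\LL(x,y)\|_H$, so $S$ satisfies the same Gaussian size and regularity estimates, and the hypotheses of Theorem~\ref{Th1.0} can be verified for $S$ by computations parallel to those above. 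This yields the boundedness of $S$ from $BMO_\LL(\R,\gamma(H,\B^*))=BMO_\LL(\R,\gamma(H,\B)^*)$ into $BMO_\LL(\R,\B^*)$, and dualizing gives the boundedness of $T_{j,\pm}^\LL$ from $H^1_\LL(\R,\B)$ into $H^1_\LL(\R,\gamma(H,\B))$, completing the proof.
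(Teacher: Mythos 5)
Your $BMO_\LL$ half is essentially the paper's own proof: the paper also verifies that $T_{j,\pm}^\LL$ is a $(\B,\gamma(H,\B))$-Hermite-Calder\'on-Zygmund operator (using \cite[Theorem 2]{BCCFR} for the $L^2$ bound, where the restriction $n\geq 3$ for $T_{j,-}^\LL$ enters, and the rank-one identity $\|g(\cdot)b\|_{\gamma(H,\B)}=\|g\|_H\|b\|_\B$ to reduce the kernel bounds to the scalar $H$-norm estimates \eqref{N2.0}--\eqref{N3.2}), and then checks the hypotheses of Theorem~\ref{Th1.0} through the simplified criterion $T_{j,+}^\LL(1),\ \nabla_x T_{j,+}^\LL(1)\in L^\infty(\R,H)$, computed from the explicit formula for $(\partial_{x_j}+x_j)W_s^\LL(1)$.

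The Hardy-space half, however, has a genuine gap. Boundedness of the transpose $S$ from $BMO_\LL(\R,\gamma(H,\B^*))$ into $BMO_\LL(\R,\B^*)$ only tells you that, for suitable $f$, the map $g\mapsto\int\langle T_{j,\pm}^\LL f,g\rangle$ is a bounded functional on $BMO_\LL(\R,\gamma(H,\B^*))$, i.e.\ that $T_{j,\pm}^\LL f$ defines an element of the \emph{bidual} $\bigl(H^1_\LL(\R,\gamma(H,\B))\bigr)^{**}$. Since $H^1_\LL$ is not reflexive, this does not place $T_{j,\pm}^\LL f$ in $H^1_\LL(\R,\gamma(H,\B))$. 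Put differently: a bounded operator on a dual space has a bounded pre-adjoint on the predual only if it is weak-$*$ continuous, and you have not shown (and cannot easily show, prior to the theorem itself) that $S$ is an adjoint. The duality computation $\|T f\|_{H^1_\LL}=\sup_{\|g\|_{BMO_\LL}\leq 1}|\int\langle Tf,g\rangle|$ is legitimate only once one \emph{already knows} $Tf\in H^1_\LL(\R,\gamma(H,\B))$ and the integral pairing is justified (this is exactly the integrability hypothesis in Proposition~\ref{Prop2.2}); and the membership $T_{j,\pm}^\LL f\in H^1_\LL$, even for a single atom $f$, is precisely the hard content you would be assuming. Indeed, the paper uses duality only in the reverse direction (Subsections~\ref{subsec:2.3} and~\ref{subsec:2.4}, for the lower bounds in Theorem~\ref{Th1.1}), where membership has been established beforehand by a direct argument.

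What the paper does instead (Subsection~\ref{subsec:3.2}) is prove the $H^1_\LL$ bound directly via the maximal-function characterization of Proposition~\ref{Prop2.4}: one must show $(a)$ that $T_{j,+}^\LL$ maps $H^1_\LL(\R,\B)$ into $L^1(\R,\gamma(H,\B))$ — handled atom by atom, where the Gaussian factor in \eqref{N2.0} is essential because atoms at the critical scale $r_0\sim\rho(x_0)$ carry no cancellation — and $(b)$ that the maximal operator $f\mapsto\sup_{s>0}\|P_s^{\LL+2}(T_{j,+}^\LL f)(x,\cdot)\|_{\gamma(H,\B)}$ is bounded from $H^1_\LL(\R,\B)$ into $L^1(\R)$. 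Step $(b)$ rests on the Hermite-specific intertwining identity $P_s^{\LL+2}\bigl(T_{j,+}^\LL f(\cdot,t)\bigr)(x)=t(\partial_{x_j}+x_j)P_{s+t}^\LL f(x)$ of \cite[Lemmas 4.1 and 4.2]{ST1} — note the shifted semigroup $\LL+2$, which is the reason Proposition~\ref{Prop2.4} is proved for general $\alpha>-n$ — after which the composite operator is treated by vector-valued Calder\'on-Zygmund theory with the $L^\infty((0,\infty),H)$-valued kernel $\Y(x,y,s,t)$ and a limiting argument as at the end of Subsection~\ref{subsec:2.2}. Your proposal contains no substitute for these steps, so the $H^1_\LL$ case remains unproved as written.
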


    UMD Banach spaces are characterized as follows.
    \begin{Th}\label{Th1.3}
    Let $\B$ be a Banach space. Then, the following assertions are equivalent.
    \begin{itemize}
    \item[$(i)$] $\B$ is UMD.
    \item[$(ii)$] For some (equivalently, for every) $j=1, \dots,n,$ there exists $C>0$ such that, for every $f\in H_\LL^1(\R) \otimes \B,$
        $$ \|f\|_{H^1_\LL(\R, \B)} \leq C\| \mathcal{G}_{\LL+2,\B}(f)\|_{H^1_\LL(\R, \gamma(H,\B))},$$
        and
        $$ \|T_{j,+}^\LL (f)\|_{H^1_\LL(\R,\gamma(H,\B))} \leq C\|f\|_{H_\LL^1 (\R, \B)}. $$
    \item[$(iii)$] For some (equivalently, for every) $j=1,\dots, n,$ there exists $C>0$ such that, for every
    $f\in L^\infty_c(\R) \otimes \B,$
    $$ \|f\|_{BMO_\LL(\R,\B)}  \leq C\| \mathcal{G}_{\LL+2,\B} (f) \|_{BMO_\LL (\R, \gamma(H,\B))}$$
    and
    $$ \|T^\LL_{j,+} (f)\|_{BMO_\LL (\R, \gamma(H,\B))}\leq C \|f\|_{BMO_\LL(\R, \B)}.$$
    \end{itemize}
    In $(ii)$ and $(iii)$ the operators $\mathcal{G}_{\LL+2, \B}$ and $T_{j,+}^\LL,$ $j=1, \dots, n,$ can be replaced by
    $\mathcal{G}_{\LL-2, \B}$ and $T_{j,-}^\LL,$ $j=1, \dots, n,$ respectively, provided that $n\geq 3.$
    \end{Th}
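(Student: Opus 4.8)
The plan is to prove the cycle of implications $(i)\Rightarrow(ii)\Rightarrow(i)$ and $(i)\Rightarrow(iii)\Rightarrow(i)$, drawing on the earlier theorems for the forward directions and constructing the Hilbert-transform bound from the square-function inequalities for the converse directions. First I would dispatch $(i)\Rightarrow(ii)$ and $(i)\Rightarrow(iii)$ almost for free: assuming $\B$ is UMD, Theorem~\ref{Th1.1} applied with $\alpha=2$ (and $\alpha=-2$ when $n\geq 3$) gives the lower bound $\|f\|_{E(\R,\B)}\lesssim\|\mathcal{G}_{\LL+2,\B}(f)\|_{E(\R,\gamma(H,\B))}$ for $E=H^1_\LL$ and $E=BMO_\LL$, while Theorem~\ref{Th1.2} gives the boundedness of $T_{j,+}^\LL$ (respectively $T_{j,-}^\LL$) on both spaces. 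These are exactly the two estimates listed in $(ii)$ and $(iii)$, so both forward implications are immediate consequences of the already-established results; the restriction $n\geq 3$ for the minus sign is inherited verbatim.

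The substance of the theorem is the converse, and here the strategy is to recover the UMD property of $\B$ by producing a concrete bounded realization of the Hilbert transform. The essential observation is that the first-order operators $\partial_{x_j}\pm x_j$ are, up to the Poisson factor $t\partial_t$, the Hermite analogues of the Riesz transforms, and the combination $T_{j,+}^\LL$ together with the reconstruction map implicit in the inequality $\|f\|\lesssim\|\mathcal{G}_{\LL+2,\B}(f)\|$ encodes a Hermite Riesz transform acting on $\B$-valued functions. I would therefore compose the two inequalities in $(ii)$: the second one sends $f$ to $T_{j,+}^\LL(f)\in H^1_\LL(\R,\gamma(H,\B))$ boundedly, and the first one, read as a coercivity estimate for $\mathcal{G}_{\LL+2,\B}$, allows one to invert the square function and thereby bound $f$ in terms of the image of the Riesz-type operator. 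The composition yields the boundedness on $H^1_\LL(\R,\B)$ (respectively $BMO_\LL(\R,\B)$) of the Hermite Riesz transform $R_j^\LL$, and one then transfers this to $L^p$-boundedness of the classical Riesz transform on $\R$, which by the standard characterization (via the Hilbert transform being generated by Riesz transforms, or by the Fourier-multiplier description of UMD) forces $\B$ to be UMD.

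The main obstacle I expect is making rigorous the transference from the local, Hermite-adapted statement on $H^1_\LL$ or $BMO_\LL$ to a global statement strong enough to detect UMD. The Hermite operator differs from $-\Delta$ by the potential $|x|^2$, so the kernels carry the Gaussian factor $e^{-c(|x-y|^2+|x||x-y|)}$ appearing in the Hermite-Calder\'on-Zygmund conditions, and $H^1_\LL$, $BMO_\LL$ are defined through the admissibility radius $\rho(x)$ rather than globally. To extract a clean Hilbert-transform bound one must localize to a fixed ball $B(x_0,r_0)$ with $r_0<\rho(x_0)$, on which the Hermite kernel is comparable to the Euclidean Riesz kernel and the spaces $H^1_\LL$, $BMO_\LL$ coincide with their classical counterparts up to controlled errors; a rescaling/blow-up argument concentrating near a point then produces the constant-coefficient Riesz transform on $\B$-valued functions on $\R$. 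This localization-and-rescaling step, and the verification that the error terms coming from the potential and from $\rho$ do not destroy the boundedness constant, is where the real work lies. The shift from $\LL$ to $\LL+2$ (or $\LL-2$) is precisely the bookkeeping device that aligns the first-order operators $\partial_{x_j}\pm x_j$ with the correct spectral shift so that $T_{j,\pm}^\LL$ behaves like a genuine Riesz transform; I would keep careful track of this shift throughout, since it is what makes the square-function inequality with parameter $2$ pair correctly with $T_{j,+}^\LL$, and likewise $-2$ with $T_{j,-}^\LL$ in dimension $n\geq 3$.
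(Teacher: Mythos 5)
Your forward implications are exactly the paper's: $(i)\Rightarrow(ii)$ and $(i)\Rightarrow(iii)$ follow at once from Theorems \ref{Th1.1} and \ref{Th1.2}, and you correctly identify the engine of the converse, namely that the spectral shift by $2$ makes $T_{j,+}^\LL$ factor through the Hermite Riesz transform: since $(\partial_{x_j}+x_j)h_k=(2k_j)^{1/2}h_{k-e_j}$ (see \cite[Lemmas 4.1 and 4.2]{ST1}) one has $T_{j,+}^\LL(a)=-\mathcal{G}_{\LL+2,\mathbb{C}}R_{j,+}^\LL(a)$ on atoms, so the two inequalities in $(ii)$ compose to give $\|R_{j,+}^\LL(f)\|_{H^1_\LL(\R,\B)}\leq C\|f\|_{H^1_\LL(\R,\B)}$, and similarly in $(iii)$. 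But already at this stage your sketch omits a necessary ingredient: the first inequality in $(ii)$ is only hypothesized for $f\in H^1_\LL(\R)\otimes\B$, so to apply it to $R_{j,+}^\LL(f)$ you must first know that the scalar Riesz transform maps $H^1_\LL(\R)$ into itself (respectively $BMO_\LL(\R)$ into itself for $(iii)$), and the same scalar boundedness is what allows the atomic identity $T_{j,+}^\LL=-\mathcal{G}_{\LL+2,\mathbb{C}}\circ R_{j,+}^\LL$ to be extended from atoms to all of $H^1_\LL(\R)$. This is precisely the content of Propositions \ref{Pr3.1} and \ref{Pr3.2}, which the paper proves in the Appendix for this purpose; your proposal neither proves nor cites such a statement.

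The genuine gap is your final transference step. You propose to localize to balls of radius smaller than $\rho(x_0)$, compare the Hermite kernels with the Euclidean ones, and blow up to recover the classical Riesz transform, and you yourself flag this as where the real work lies --- but you never carry it out, and it is far from routine: $H^1_\LL$ and $BMO_\LL$ agree with their classical counterparts only locally at scale $\rho$, and one would have to verify that the error terms coming from the potential and the constants in the hypothesized inequalities survive the rescaling uniformly. The paper's route makes all of this unnecessary. From $\|R_{j,+}^\LL(f)\|_{H^1_\LL(\R,\B)}\leq C\|f\|_{H^1_\LL(\R,\B)}$, using the embeddings $H^1(\R,\B)\subset H^1_\LL(\R,\B)\subset L^1(\R,\B)$ and the density of $H^1_\LL(\R)\otimes\B$, it invokes \cite[Theorem 4.1]{MTX} to conclude that $R_{j,+}^\LL$ extends to a bounded operator on $L^2(\R,\B)$; in the $BMO$ case one must first restrict to finite-dimensional subspaces $\E\subset\B$, so that $L^\infty_c(\R,\E)=L^\infty_c(\R)\otimes\E\subset BMO_\LL(\R,\E)\subset BMO(\R,\E)$ with constants independent of $\E$ --- a uniformity device absent from your sketch. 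Then, instead of manufacturing the constant-coefficient Riesz transform, the paper applies \cite[Theorem 2.3]{AT}, which characterizes UMD directly by the $L^2(\R,\B)$-boundedness of the Hermite Riesz transforms, so no passage back to the Euclidean setting is needed. Without either this citation path or a fully executed localization and blow-up argument, your converse directions remain a program rather than a proof.
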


    In the following sections we present proofs of Theorems \ref{Th1.1}, \ref{Th1.2} and \ref{Th1.3}.
    In the Appendix (Section~\ref{sec:app}) we show that the Riesz transforms in the Hermite setting can be extended as bounded
    operators from $BMO_\LL(\R, \B)$ into itself and from  $H^1_\LL(\R, \B)$ into itself. These boundedness properties will be needed when proving
    Theorem~\ref{Th1.3}. Moreover, they have interest in themself  and complete the results established
    in \cite{BCFST} and in \cite{DZ1}.\\

    Throughout this paper by $C$ and $c$ we always denote positive constants that can change on each occurrence.

    \section{Proof of Theorem~\ref{Th1.1}}  \label{sec:Th1.1}

    We distinguish four parts in the proof of Theorem~\ref{Th1.1}.

    \subsection{}\label{subsec:2.1}

    We are going to show that the operator $\mathcal{G}_{\mathcal{L}+\alpha, \B}$ is bounded from
    $BMO_\mathcal{L}(\R,\B)$ into $BMO_\mathcal{L}(\R,$ $\gamma(H,\B))$. In order to see this
    we will use Theorem~\ref{Th1.0}. According to \cite[Theorem 1]{BCCFR} the operator
    $\mathcal{G}_{\mathcal{L}+\alpha, \B}$ is bounded from $L^2(\R,\B)$ into $L^2(\R,\gamma(H,\B))$,
    because $\B$ is UMD. \\

    Suppose that $f \in BMO_\mathcal{L}(\R,\B)$. Then, $f$ is a $\B$-valued function with bounded mean
    oscillation and hence $\int_{\R} \|f(x)\|_\B/(1+|x|)^{n+1} dx < \infty$. The kernel
    $P_t^{\mathcal{L}+\alpha}(x,y)$ of the operator $P_t^{\mathcal{L}+\alpha}$ can be written as
    $$P_t^{\mathcal{L}+\alpha}(x,y)
        = \frac{t}{\sqrt{4 \pi}} \int_0^\infty s^{-3/2} e^{-t^2/(4s)-\alpha s} W_s^\mathcal{L}(x,y) ds,
        \quad x,y \in \R \text{ and } t>0.$$
    We have that
    \begin{align*}
        t \partial_t P_t^{\mathcal{L}+\alpha}(x,y)
            = & \frac{t}{\sqrt{4 \pi}} \int_0^\infty s^{-3/2} \left(1-\frac{t^2}{2s}\right) e^{-t^2/(4s)-\alpha s} W_s^\mathcal{L}(x,y)ds,
              \quad x,y \in \R \text{ and } t>0.
    \end{align*}
 By \cite[(4.4) and (4.5)]{BCFST} we have that, for every $x,y \in \R \text{ and } s>0$,
 \begin{align}\label{2.1}
         W_s^\mathcal{L}(x,y)
            \leq &C\frac{e^{-ns}}{(1-e^{-4s})^{n/2}} \exp\left(-c\left( \frac{|x-y|^2}{1-e^{-2s}} + (1-e^{-2s})|x+y|^2+(|x|+|y|)|x-y|\right)\right)\nonumber\\
    \leq& Ce^{-c(|x-y|^2 + (|x|+|y|)|x-y|)}\frac{e^{-ns-c\frac{|x-y|^2}{s}-c(1-e^{-2s})|x+y|^2}}{(1-e^{-4s})^{n/2}}.
    \end{align}  Hence, since $\alpha +n>0$, for each $x,y \in \R \text{ and } t>0$,
  \begin{align}\label{2.0}
        | t \partial_t P_t^{\mathcal{L}+\alpha}(x,y)|
            \leq & C t e^{-c(|x-y|^2 + (|x|+|y|)|x-y|)} \int_0^\infty \frac{e^{-c \frac{|x-y|^2+t^2}{s}}}{s^{3/2}}\frac{e^{-(\alpha +n)s}}{(1-e^{-4s})^{n/2}}ds\nonumber\\
\leq &Ct e^{-c(|x-y|^2 + (|x|+|y|)|x-y|)}\int_0^\infty \frac{e^{-c \frac{|x-y|^2+t^2}{s}}}{s^{(n+3)/2}}ds
 \nonumber \\
            \leq & C e^{-c(|x-y|^2 + (|x|+|y|)|x-y|)}\frac{t}{(t+|x-y|)^{n+1}}\leq C\frac{t}{(t+|x-y|)^{n+1}}.
    \end{align}
 Then, $\int_{\R} |t \partial_t P_t^{\mathcal{L}+\alpha}(x,y)| \ \|f(y)\|_\B dy < \infty$, for every
    $x \in \R$ and $t>0$, and we deduce that
    $$t \partial_t P_t^{\mathcal{L}+\alpha}(f)(x)
        = \int_{\R} t \partial_t P_t^{\mathcal{L}+\alpha}(x,y) f(y) dy,\quad x\in \mathbb{R}^n\mbox{ and }t>0 .
$$

Moreover, by (\ref{2.0}) we get that,
    \begin{align}\label{2.0.1}
        \|t \partial_t P_t^{\mathcal{L}+\alpha}(x,y)\|_H\leq &Ce^{-c( |x-y|^2 + |y|  |x-y| )}\left(\int_0^\infty \frac{t}{(t+|x-y|)^{2(n+1)}}dt\right)^{1/2}\nonumber\\
\leq &C \frac{e^{-c( |x-y|^2 + |y|  |x-y| )}}{|x-y|^n},\quad x,y \in \R, \ x \neq y.
    \end{align}

    Let $x,y \in \R$, $x \neq y$. We write $F(x,y;t)=t \partial_t P_t^{\mathcal{L}+\alpha}(x,y)$, $t>0$.
    Since $F(x,y; \cdot) \in H$, for every $b \in \B$, the function
    $F_b(x,y;t)=F(x,y;t)b$, $t>0$, defines an element $\tilde{F}_b(x,y;\cdot) \in \gamma(H, \B)$ satisfying that
    \begin{align*}
        \langle S, \tilde{F}_b(x,y;\cdot)(h) \rangle_{\B^*,\B}
            = & \int_0^\infty \langle S, F_b(x,y;t) \rangle_{\B^*,\B} h(t) \frac{dt}{t} \\
            = & \langle S, b \rangle_{\B^*,\B} \int_0^\infty F(x,y;t)  h(t) \frac{dt}{t},
            \quad S \in \B^* \text{ and } h \in H.
    \end{align*}
    Then, for every $b \in \B$,
    $$\tilde{F}_b(x,y;\cdot)(h)
        = \left(\int_0^\infty F(x,y;t) h(t) \frac{dt}{t}\right) b,
        \quad h \in H.$$
    We consider the operator $\tau(x,y)(b)=\tilde{F}_b(x,y;\cdot)$, $b \in \B$. We have that
    \begin{align}\label{f1}
        \|\tau(x,y)(b)\|_{\gamma(H,\B)}
            = & \left( \E \left\| \sum_{k=1}^\infty \gamma_k \tilde{F}_b(x,y;\cdot)(e_k)  \right\|_\B^2 \right)^{1/2}
            =   \left( \E \left\| \sum_{k=1}^\infty \gamma_k \int_0^\infty F(x,y;t)e_k(t) \frac{dt}{t} b  \right\|_\B^2 \right)^{1/2} \nonumber \\
            = & \left( \E \left|  \sum_{k=1}^\infty \gamma_k \int_0^\infty F(x,y;t)e_k(t) \frac{dt}{t} \right|^2  \right)^{1/2} \left\| b \right\|_\B
            = \|F(x,y;\cdot)\|_H \|b\|_\B,
            \quad b \in \B.
    \end{align}
    Hence, if $L(\B, \gamma(H,\B))$ denotes the space of bounded operators from $\B$ into $\gamma(H, \B)$, we obtain
    \begin{equation*}
        \|\tau(x,y)\|_{L(\B, \gamma(H,\B))}
        \leq C  \frac{e^{-c( |x-y|^2 + |y|  |x-y| )}}{|x-y|^n}.
    \end{equation*}
Let $j=1, \dots, n$. We have that
$$
        \partial_{x_j}\left( t \partial_t P_t^{\mathcal{L}+\alpha}(x,y) \right)
            =  \frac{t}{\sqrt{4 \pi}} \int_0^\infty s^{-3/2} \left(1-\frac{t^2}{2s}\right) e^{-t^2/(4s)-\alpha s}\partial_{x_j}(W_s^\mathcal{L}(x,y))ds
              \quad x,y \in \R \text{ and } t>0.
$$
Since
$$
\partial_{x_j}(W_s^\mathcal{L}(x,y))=-\frac{1}{2}\left( \frac{1+e^{-2s}}{1-e^{-2s}}(x_j-y_j) + \frac{1-e^{-2s}}{1+e^{-2s}}(x_j+y_j)\right)W_s^\mathcal{L}(x,y), \quad x,y \in \R \text{ and } s>0,
$$
we obtain that
\begin{equation}\label{derivW}
|\partial_{x_j}(W_s^\mathcal{L}(x,y))|\leq Ce^{-c(|x-y|^2 + (|x|+|y|)|x-y|)}\frac{e^{-ns-c\frac{|x-y|^2}{s}}}{(1-e^{-4s})^{(n+1)/2}},\quad x,y \in \R \text{ and } s>0.
\end{equation}
By proceeding as above we get
    $$\|\partial_{x_j}\left( t \partial_t P_t^{\mathcal{L}+\alpha}(x,y) \right)\|_H
        \leq \frac{C}{|x-y|^{n+1}},
        \quad x,y \in \R, \ x \neq y,$$
    and then
    $$\|\partial_{x_j} \tau(x,y)\|_{L(\B, \gamma(H,\B))}
        \leq \frac{C}{|x-y|^{n+1}},
        \quad x,y \in \R, \ x \neq y.$$
    By taking into account symmetries we obtain the same estimates when $\partial_{x_j}$ is replaced by $\partial_{y_j}$.\\

    Next we show that if $f \in L_c^\infty(\R,\B)$ then
    \begin{equation}\label{A}
        t \partial_t P_t^{\mathcal{L}+\alpha}(f)(x)
            =  \int_{\R} \tau(x,y) f(y)dy,
        \quad x \notin \supp(f),
    \end{equation}
    where the integral is understood in the $\gamma(H,\B)$-Bochner sense. Indeed, let $f \in L_c^\infty(\R,\B)$ and
    $x \notin \supp(f)$. We have that
    $$\int_{\R} \|  \tau(x,y) f(y)\|_{\gamma(H,\B)} dy
        \leq C \int_{\supp(f)} \frac{\|f(y)\|_\B}{|x-y|^n} dy
        < \infty.$$
    Since $\gamma(H,\B)$ is continuously contained in the space $L(H,\B)$,
    $ \tau(x,\cdot) f \in L^1(\R,L(H,\B))$. Then, there exists a sequence $(T_k)_{k \in \mathbb{N}}$
    in $L^1(\R) \otimes L(H,\B)$ such that
    $$T_k
        \longrightarrow \tau(x,\cdot) f,
        \quad \text{as } k \to \infty, \text{ in } L^1(\R,L(H,\B)).$$
    Hence,
    $$\int_{\R} T_k(y) dy
        \longrightarrow \int_{\R} \tau(x,y) f(y) dy,
        \quad \text{as } k \to \infty, \text{ in } L(H,\B), $$
    and also, for every $h \in H$,
    $$T_k[h]
        \longrightarrow \tau(x,\cdot)f[h],
        \quad \text{as } k \to \infty, \text{ in } L^1(\R,\B).$$
    Suppose that $T=\sum_{\ell=1}^m f_{\ell} \tau_\ell$, where $f_\ell \in L^1(\R)$ and $\tau_\ell \in L(H,\B)$,
    $\ell =1, \dots, m \in \mathbb{N}$. We can write
    $$\left( \int_{\R} T(y) dy \right)[h]
        = \sum_{\ell=1}^m  \tau_\ell[h] \int_{\R} f_{\ell}(y) dy
        =  \int_{\R} T(y)[h] dy,
        \quad h \in H.$$
    Hence, we conclude that
    $$ \left(\int_{\R} \tau(x,y) f(y) dy\right)[h]
        =  \int_{\R} \tau(x,y) f(y)[h] dy,
        \quad h \in H,$$
    where the last integral is understood in the $\B$-Bochner sense.\\

    For every $h \in H$, by \eqref{2.0.1} we have that
    \begin{align*}
        \int_{\R} \tau(x,y) f(y)[h] dy
            = & \int_{\supp(f)}  \left(\int_0^\infty t \partial_t P_t^{\mathcal{L}+\alpha}(x,y) h(t) \frac{dt}{t}\right) f(y) dy \\
            = & \int_0^\infty \left(\int_{\supp(f)} t \partial_t P_t^{\mathcal{L}+\alpha}(x,y) f(y) dy\right) h(t) \frac{ dt}{t} \\
            = & \int_0^\infty t \partial_t P_t^{\mathcal{L}+\alpha}(f)(x)  h(t) \frac{dt}{t}
            =  \left(t \partial_t P_t^{\mathcal{L}+\alpha}(f)(x)\right)[h].
    \end{align*}
    Thus \eqref{A} is established.\\

    We conclude that $\mathcal{G}_{\mathcal{L}+\alpha,\B}$ is a $(\B,\gamma(H,\B))$-Hermite-Calder\'on-Zygmund
    operator.\\

    On the other hand, by \cite[Proposition 3.3]{ST4} we have that
    \begin{equation}\label{2.1.0}
        W_t^\mathcal{L}(1)(x)
            = \frac{1}{\pi^{n/2}} \left( \frac{e^{-2t}}{1+e^{-4t}} \right)^{n/2}
                \exp\left(-\frac{1-e^{-4t}}{2(1+e^{-4t})}|x|^2\right),
            \quad x \in \R \text{ and } t>0.
    \end{equation}
    It follows that, for every $x \in \R \text{ and } t>0$,
    \begin{equation}\label{2.1.1}
        \partial_t W_t^{\mathcal{L}+\alpha}(1)(x)
            = \partial_t \left( e^{-\alpha t} W_t^\mathcal{L}(1)(x) \right)= -e^{-\alpha t}\left(\alpha + n \frac{1-e^{-4t}}{1+e^{-4t}}+|x|^2 \frac{4e^{-4t}}{(1+e^{-4t})^2}\right) W_t^\mathcal{L}(1)(x).
    \end{equation}
    We can write
    \begin{align}\label{2.1.2}
        \mathcal{G}_{\mathcal{L}+\alpha,\mathbb{C}}(1)(x,t)
            = & \frac{t}{\sqrt{\pi}} \int_0^\infty \frac{e^{-u}}{\sqrt{u}} \partial_t W_{t^2/4u}^{\mathcal{L}+\alpha}(1)(x) du \nonumber \\
            = & \frac{t^2}{\sqrt{4\pi}} \int_0^\infty \frac{e^{-u}}{u^{3/2}} \partial_z W_z^{\mathcal{L}+\alpha}(1)(x)_{|{ z=t^2/4u}} du,
            \ x \in \R \text{ and } t>0.
    \end{align}
    Minkowski's inequality leads to
    \begin{align*}
        \|\mathcal{G}_{\mathcal{L}+\alpha,\mathbb{C}}(1)(x,\cdot)\|_H
            \leq & C \int_0^\infty \frac{e^{-u}}{u^{3/2}} \| t^2 \partial_z W_z^{\mathcal{L}+\alpha}(1)(x)_{|{ z=t^2/4u}} \|_H du \\
            \leq & C \int_0^\infty \frac{e^{-u}}{u^{1/2}} \| z \partial_z W_z^{\mathcal{L}+\alpha}(1)(x) \|_H du,
            \quad  x \in \R.
    \end{align*}
    Moreover, we have that
    \begin{align*}
        \| z \partial_z W_z^{\mathcal{L}+\alpha}(1)(x) \|_H
            \leq & C \left( \int_0^1 e^{-cz|x|^2}(1+|x|^4)z dz  + \int_1^\infty e^{-2(n+\alpha)z}z dz \right)^{1/2}
            \leq C, \quad  x \in \R.
    \end{align*}
    Hence, $\|\mathcal{G}_{\mathcal{L}+\alpha,\mathbb{C}}(1)(x,\cdot)\|_H \in L^\infty(\R)$.
    As above, this means that
    $\mathcal{G}_{\mathcal{L}+\alpha,\mathbb{C}}(1) \in L^\infty(\mathbb{R}^n,H)$.\\

    In a similar way we can see that, for every $j=1, \dots, n$,
    $\partial_{x_j}\mathcal{G}_{\mathcal{L}+\alpha,\mathbb{C}}(1) \in L^\infty(\mathbb{R}^n,H)$.\\

    By using Theorem~\ref{Th1.0} we can show that the operator
    $\mathcal{G}_{\mathcal{L}+\alpha,\B}$ is bounded from $BMO_\mathcal{L}(\R,\B)$ into
    $BMO_\mathcal{L}(\R,\gamma(H,\B))$.\\

    \subsection{}\label{subsec:2.2}

    We are going to prove that $\mathcal{G}_{\mathcal{L}+\alpha,\B}$ is a bounded operator from $H^1_\mathcal{L}(\R,\B)$
    into $H^1_\mathcal{L}(\R,\gamma(H,\B))$. In order to show this property we extend to a Banach valued setting the atomic characterization
    of Hardy spaces due to Dziuba\'nski and Zienkiewicz (\cite{Dz} and \cite{DZ1}).\\

    A strongly measurable function $a : \R \longrightarrow \B$ is an atom for $H^1_\mathcal{L}(\R,\B)$ when there
    exists $x_0 \in \R$ and $0<r_0 \leq \rho(x_0)$
    such that the support of $a$ is contained in $B(x_0,r_0)$ and
    \begin{itemize}
        \item[$(i)$]  $\displaystyle \|a\|_{L^\infty(\R,\B)} \leq |B(x_0,r_0)|^{-1}$,
        \item[$(ii)$] $\displaystyle \int_\R a(x)dx=0$, provided that $r_0 \leq \rho(x_0)/2$.
    \end{itemize}

    \begin{Prop}\label{Prop2.3}
        Let $Y$ be a Banach space. Suppose that $f \in L^1(\R,Y)$. The following assertions are equivalent.
        \begin{itemize}
            \item[$(i)$]   $\displaystyle \sup_{t>0} \|W_t^\mathcal{L}(f)\|_Y \in L^1(\R)$.
            \item[$(ii)$]  $\displaystyle \sup_{t>0} \|P_t^\mathcal{L}(f)\|_Y \in L^1(\R)$.
            \item[$(iii)$] There exists a sequence $(a_j)_{j \in \mathbb{N}}$ of atoms in $H^1_\mathcal{L}(\R,Y)$
            and a sequence $(\lambda_j)_{j \in \mathbb{N}}$ of complex numbers such that $\sum_{j\in \mathbb{N}}|\lambda_j| < \infty$
            and $f= \sum_{j\in \mathbb{N}}\lambda_j a_j$.
        \end{itemize}
    \end{Prop}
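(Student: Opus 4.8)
The plan is to prove the cycle $(i)\Rightarrow(ii)\Rightarrow(iii)\Rightarrow(i)$, adapting to the $Y$-valued setting the scalar atomic theory of Dziuba\'nski and Zienkiewicz (\cite{Dz}, \cite{DZ1}). The guiding principle is that the heat and Poisson kernels $W_s^\mathcal{L}(x,y)$ and $P_t^\mathcal{L}(x,y)$ are \emph{scalar}, so the operators are linear in $f$ and every estimate of the scalar theory transfers once $|\cdot|$ is replaced by $\|\cdot\|_Y$ and the integrals are read in the $Y$-Bochner sense. In particular no geometry of $Y$ (such as UMD) is needed.

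The implication $(i)\Rightarrow(ii)$ is immediate from subordination. Since by \eqref{1.3.1} the measure $\frac{t}{\sqrt{4\pi}}s^{-3/2}e^{-t^2/(4s)}\,ds$ is a probability measure on $(0,\infty)$, for every $x\in\R$ and $t>0$ I obtain
$$\|P_t^\mathcal{L}(f)(x)\|_Y\leq \frac{t}{\sqrt{4\pi}}\int_0^\infty s^{-3/2}e^{-t^2/(4s)}\,\|W_s^\mathcal{L}(f)(x)\|_Y\,ds\leq \sup_{s>0}\|W_s^\mathcal{L}(f)(x)\|_Y,$$
so the Poisson maximal function is pointwise dominated by the heat maximal function. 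For $(iii)\Rightarrow(i)$ (and likewise $(iii)\Rightarrow(ii)$), by the triangle inequality and $\sum_j|\lambda_j|<\infty$ it is enough to bound $\big\|\sup_{t>0}\|W_t^\mathcal{L}(a)\|_Y\big\|_{L^1(\R)}$ uniformly over atoms $a$ supported in $B=B(x_0,r_0)$. I would split $\R=2B\cup(\R\setminus 2B)$. On $2B$, the Gaussian bound contained in \eqref{2.1} gives $\sup_{t>0}\|W_t^\mathcal{L}(a)(x)\|_Y\leq C\,\mathcal{M}(\|a(\cdot)\|_Y)(x)$, where $\mathcal{M}$ is the Hardy--Littlewood maximal operator; then the Cauchy--Schwarz inequality, the $L^2$-boundedness of $\mathcal{M}$ and $\|a\|_{L^2(\R,Y)}\leq|B|^{-1/2}$ yield a bound by $C|2B|^{1/2}|B|^{-1/2}\leq C$. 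On $\R\setminus2B$ one uses the cancellation of $a$ when $r_0\leq\rho(x_0)/2$, and the extra decay furnished by the factors $e^{-c(|x|+|y|)|x-y|}$ and $e^{-c(1-e^{-2s})|x+y|^2}$ in \eqref{2.1} when $\rho(x_0)/2<r_0\leq\rho(x_0)$; this is exactly the Dziuba\'nski--Zienkiewicz localization, the $Y$-valued version only replacing $|a(y)|$ by $\|a(y)\|_Y$.

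The remaining implication $(ii)\Rightarrow(iii)$ is the atomic decomposition and will be the main obstacle. I would run the scalar scheme of \cite{Dz} and \cite{DZ1}. Choose a covering $\{B(x_k,\rho(x_k))\}_k$ of $\R$ with bounded overlap and a subordinate partition of unity $\{\psi_k\}_k$, and write $f=\sum_k f\psi_k$. Each piece is supported in a ball of radius comparable to $\rho(x_k)$; the crux is the \emph{localization estimate} showing that, at scales below the critical radius, the global maximal function in $(ii)$ controls the local grand maximal function of $f\psi_k$, so that $f\psi_k$ belongs to a local Hardy space $h^1(\R,Y)$ with norm summable against $\big\|\sup_{t>0}\|P_t^\mathcal{L}(f)\|_Y\big\|_{L^1(\R)}$. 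This step rests on comparing $W_t^\mathcal{L}$ with the free heat semigroup through the off-diagonal and regularity bounds of the kernel, and it is here that the real work lies; the vector-valued character adds nothing essential because all these bounds are scalar. Finally I would invoke the ($Y$-valued) atomic decomposition of $h^1$: the pieces localized at scales below $\rho(x_k)$ produce atoms satisfying the cancellation condition $(ii)$ of the atom definition, while those at scale $\sim\rho(x_k)$ are, after normalization, bounded multiples of atoms of the non-cancellative type. Reassembling yields $f=\sum_j\lambda_j a_j$ with $\sum_j|\lambda_j|\leq C\big\|\sup_{t>0}\|P_t^\mathcal{L}(f)\|_Y\big\|_{L^1(\R)}$, closing the cycle. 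The hardest point throughout is the localization estimate, since the lack of a Markovian/conservative structure for $\{W_t^\mathcal{L}\}$ forces one to control the $|x+y|^2$-type terms of the kernel precisely at the scale $\rho$.
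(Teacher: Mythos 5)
Your proposal is correct and follows essentially the same route as the paper: $(i)\Rightarrow(ii)$ by observing the subordination measure in \eqref{1.3.1} is a probability measure, and $(ii)\Rightarrow(iii)$ via the Dziuba\'nski--Zienkiewicz localization (partition of unity at the critical radius $\rho$, comparison of $P_t^\mathcal{L}$ with the classical Poisson semigroup, commutator estimates, then the $Y$-valued Goldberg/Stein local $h^1$ theory, which transfers precisely because the kernels are scalar). The only structural difference is that you close the cycle by estimating the heat maximal function directly on the two types of atoms to get $(iii)\Rightarrow(i)$, whereas the paper obtains $(i)\Leftrightarrow(iii)$ by adapting \cite[Theorem 1.5]{DZ1} to the Banach-valued setting; both rest on the same kernel bounds (the Gaussian estimate \eqref{2.1} and its gradient analogue), so this is a presentational rather than substantive difference.
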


    \begin{proof}
        Dziuba\'nski and Zienkiewicz proved in \cite[Theorem 1.5]{DZ1} (see also \cite{Dz}) that
        $(i) \Leftrightarrow (iii)$ for $Y=\mathbb{C}$. In order to show \cite[Theorem 1.5]{DZ1} they use the atomic decomposition
        for the functions in the local Hardy space $h^1(\R)$ established by Goldberg (\cite[Lemma 5]{Go}). By reading carefully
        \cite[Theorem 1, p. 91,  and Theorem 2, p. 107]{Ste1} we can see that the classical Banach valued $H^1(\R,Y)$
        can be defined by using different maximal functions and by atomic representations, that is,
        \cite[Theorem 1, p. 91,  and Theorem 2, p. 107]{Ste1} continue being true when we replace $H^1(\R)$ by
        $H^1(\R,Y)$. Then, if we define the Banach valued local Hardy space $h^1(\R,Y)$ in the natural way, $h^1(\R,Y)$ can be
        described by the corresponding maximal functions and by atomic decompositions (see \cite[Theorem 1 and Lemma 5]{Go}). More precisely,
        the arguments in the proofs of \cite[Theorem 1 and Lemma 5]{Go} allow us to show that if $f \in L^1(\R,Y)$ then $f \in h^1(\R,Y)$
        if and only if $f=\sum_{j\in \mathbb{N}}\lambda_j a_j$, where $\lambda_j \in \mathbb{C}$, $j \in \mathbb{N}$, and $\sum_{j\in \mathbb{N}} |\lambda_j| < \infty$,
        and, for every $j \in \mathbb{N}$, $a_j$ is an $h^1$-atom as in \cite[p. 37]{Go} but taking values in $Y$. With these comments in mind and by proceeding
        as in the proof of \cite[Theorem 1.5]{DZ1} we conclude that $(i) \Leftrightarrow (iii)$.\\

        By the subordination representation \eqref{1.3.1} of $P_t^\mathcal{L}$, $t>0$,  we deduce that $(i) \Rightarrow (ii)$.\\

        To finish the proof we are going to see that $(ii) \Rightarrow (iii)$. In order to show this we can proceed as in the proof of
        \cite[Theorem 1.5]{DZ1}. We present a sketch of the proof. Firstly, by \eqref{1.3.1} and \eqref{2.1} and proceeding as in \eqref{2.0} we deduce that
        \begin{equation}\label{2.12}
            P_t^\mathcal{L}(x,y)
            \leq C e^{-c(|x-y|^2+|x||x-y|)}\frac{t}{\left( t + |x-y| \right)^{n+1}},
            \quad x,y \in \R \text{ and } t>0.
        \end{equation}
        Hence, for every $\ell \in \mathbb{N}$, there exists $C>0$ such that
        \begin{equation}\label{(A)}
            P_t^\mathcal{L}(x,y)
                \leq C \left( 1 + \frac{|x-y|}{\rho(x)}\right)^{-\ell}|x-y|^{-n},
            \quad x,y \in \R \text{ and } t>0.
        \end{equation}
        Moreover, for every $M>0$ we can find $C>0$ for which
        \begin{equation}\label{(B)}
            |P_t^\mathcal{L}(x,y)-P_t(x-y)|
                \leq C \left( \frac{|x-y|}{\rho(x)}\right)^{1/2}|x-y|^{-n},
            \quad x,y \in \R, \ |x-y| \leq M \rho(x) \text{ and } t>0,
        \end{equation}
        where $P_t$ denotes the classical Poisson semigroup.\\

        Indeed, let $M>0$.
        According to \eqref{1.3.1} we can write
        \begin{equation*}
            |P_t^\mathcal{L}(x,y)-P_t(x-y)|
                \leq C t \int_0^\infty \frac{e^{-t^2/4s}}{s^{3/2}} |W_s^\LL(x,y)-W_s(x-y)| ds,
            \quad x,y \in \R \text{ and } t>0,
        \end{equation*}
        where $W_t(x)=e^{-|x|^2/4t}/(4\pi t)^{n/2}$, $x \in \R$ and $t>0$. From \eqref{2.1} it follows that
        \begin{align*}
            & t \int_{\rho(x)^2}^\infty \frac{e^{-t^2/4s}}{s^{3/2}} |W_s^\LL(x,y)-W_s(x-y)| ds
                             \leq C t \int_{\rho(x)^2}^\infty \frac{e^{-c(t^2+|x-y|^2)/s}}{s^{(n+3)/2}} ds
              \leq C \int_{\rho(x)^2}^\infty \frac{ds}{s^{(n+2)/2}} \\
            & \qquad \qquad                 \leq \frac{C}{\rho(x)^n}
             = C \left( \frac{|x-y|}{\rho(x)} \right)^n \frac{1}{|x-y|^n},
            \quad x,y \in \R, \ x \neq y \text{ and } t>0.
        \end{align*}
        On the other hand, we have that
        \begin{align*}
            & t \int_0^{\rho(x)^2} \frac{e^{-t^2/4s}}{s^{3/2}} |W_s^\LL(x,y)-W_s(x-y)| ds
                             \leq C \Big\{ t \int_0^{\rho(x)^2} \frac{e^{-c(t^2+|x-y|^2)/s}}{s^{(n+3)/2}} |e^{-ns}-1| ds \\
            & \qquad \qquad \qquad +  t \int_0^{\rho(x)^2} \frac{e^{-c(t^2+|x-y|^2)/s}}{s^{3/2}}
                                        \left| \frac{1}{(1-e^{-4s})^{n/2}} - \frac{1}{(4s)^{n/2}}\right| ds \\
            & \qquad \qquad \qquad +  t \int_0^{\rho(x)^2} \frac{e^{-c(t^2+|x-y|^2)/s}}{s^{(n+3)/2}}
                                        \left| \exp\left( -\frac{1}{4}\frac{1-e^{-2s}}{1+e^{-2s}}|x+y|^2 \right)  - 1 \right| ds \\
            & \qquad \qquad \qquad +  t \int_0^{\rho(x)^2} \frac{e^{-t^2/4s}}{s^{(n+3)/2}}
                                        \left| \exp\left( -\frac{1}{4}\frac{1+e^{-2s}}{1-e^{-2s}}|x-y|^2 \right)  - e^{-|x-y|^2/4s} \right| ds \Big\} \\
            & \qquad \qquad = C \sum_{j=1}^4 I_j(x,y,t)
            \quad x,y \in \R \text{ and } t>0.
        \end{align*}
        Since $|e^{-ns}-1| \leq C s$, $s>0$, and
        $$ \left| \frac{1}{(1-e^{-4s})^{n/2}} - \frac{1}{(4s)^{n/2}}\right| \leq \frac{C}{s^{n/2-1}}, \quad 0<s<1,$$
       we deduce that
        \begin{align*}
            I_j(x,y,t)
                \leq & C  t \int_0^{\rho(x)^2} \frac{e^{-c(t^2+|x-y|^2)/s}}{s^{(n+1)/2}} ds
                \leq   C  \int_0^{1} \frac{e^{-c(t^2+|x-y|^2)/s}}{s^{n/2}} ds \leq   C  \frac{1}{(t^2+|x-y|^2)^{(n/2-1/4)}}\int_0^{1} \frac{ds}{s^{1/4}} \\
                \leq & \frac{C}{|x-y|^{n-1/2}}
                \leq C \left( \frac{|x-y|}{\rho(x)} \right)^{1/2} \frac{1}{|x-y|^n},
                \quad x,y \in \R, \ x \neq y, \ t>0 \text{ and } j=1,2.
        \end{align*}
        Also, we have that, for every $x,y \in \R$ and $s>0$,
        $$ \left| \exp\left( -\frac{1}{4}\frac{1+e^{-2s}}{1-e^{-2s}}|x-y|^2 \right)  - e^{-|x-y|^2/4s} \right|
            \leq C e^{-|x-y|^2/4s} |x-y|^2
            \leq C s e^{-c|x-y|^2/s}.$$
        Then, by proceeding as above we get
        $$I_4(x,y,t)
            \leq C \left( \frac{|x-y|}{\rho(x)} \right)^{1/2} \frac{1}{|x-y|^n},
                \quad x,y \in \R, \ x \neq y \text{ and } t >0.$$
        Finally, we analyze $I_3$. We have that
        $$\left| \exp\left( -\frac{1}{4}\frac{1-e^{-2s}}{1+e^{-2s}}|x+y|^2 \right)  - 1 \right|
            \leq C s |x+y|^2
            \leq C \frac{s}{\rho(x)^2},
            \quad |x-y| \leq M \rho(x) \text{ and } s>0.$$
     Hence, it follows that
        $$I_3(x,y,t)
            \leq \frac{C}{\rho(x)^2} \int_0^{\rho(x)^2} \frac{e^{-c(t^2+|x-y|^2)/s}}{s^{n/2}} ds\leq \frac{C}{\rho(x)^2|x-y|^{n-1/2}}\int_0^{\rho(x)^2} \frac{ds}{s^{1/4}}=C \left( \frac{|x-y|}{\rho(x)} \right)^{1/2} \frac{1}{|x-y|^n},
           $$
            provided that $|x-y| \leq M \rho(x)$, $x \neq y $ and $t>0$.

        By combining the above estimates we obtain \eqref{(B)}.\\

        Estimations \eqref{(A)} and \eqref{(B)} can be also obtained when $n \geq 3$ as special cases of
        \cite[Lemma 3.0]{DZ1}.\\

According to \cite[p. 517, line 5]{Shen}
$$
\rho (x)\sim \frac{1}{M(x)}=\sup\left\{r>0:\frac{1}{r^{n-2}}\int_{B(x,r)}|y|^2dy\leq 1\right\}.
$$
Since $\rho(x)\leq 1/2$, there exists $m_0\in \mathbb{Z}$ such that the set $\mathcal{B}_m=\{x\in \mathbb{R}^n: 2^{m/2}\leq M(x)<2^{\frac{m+1}{2}}\}$ is empty, provided that $m<m_0$.
Then, for every $m \in \mathbb{Z}$, $m\geq m_0$, and $k \in \mathbb{N}$ we can consider $x_{(m,k)} \in \R$
        as in \cite[Lemma 2.3]{DZ1} and choose, according to
        \cite[Lemma 2.5]{DZ1}, a function
        $\psi_{(m,k)} \in C_c^\infty(B(x_{(m,k)},2^{(2-m)/2}))$ such that
        $\| \nabla \psi_{(m,k)} \|_{L^\infty(\R)} \leq C 2^{m/2}$ and
        $\sum_{(m,k)} \psi_{(m,k)} = 1$, $x \in \R$. Here $C>0$ does not depend on $(m,k)$.
We can assume $m_0=0$ to make the reading easier.

For every $m, k\in \mathbb{N}$,
let us define $B_{(m,k)}=B(x_{(m,k)},2^{(4-m)/2})$
        and $\widehat{B}_{(m,k)}=B(x_{(m,k)},(\sqrt{n}+1)2^{(4-m)/2})$ and consider the maximal operators
        $$\widetilde{\mathcal{M}}_m (f) = \sup_{0<t\leq 2^{-m}} \| P_t (f) - P_t^\mathcal{L}(f) \|_Y, \quad
        \mathcal{M}_m^\mathcal{L}(f) = \sup_{0<t\leq 2^{-m}} \| P_t^\mathcal{L}(f) \|_Y, \quad
        \mathcal{M}_m (f) = \sup_{0<t\leq 2^{-m}} \| P_t (f) \|_Y, $$
        and the maximal commutator operator
        $$\mathcal{M}_{(m,k)}^\LL (f)
             = \sup_{0<t\leq 2^{-m}} \| P_t^\mathcal{L}(\psi_{(m,k)} f) - \psi_{(m,k)}P_t^\mathcal{L}(f) \|_Y.$$
        Let $m, k \in \mathbb{N}$.
        By using \eqref{2.12} we deduce that, for a certain $C>0$ independent of $m$ and $k$,
        $$\sup_{y \in B_{(m,k)}} \int_{\R \setminus \widehat{B}_{(m,k)}}
                \sup_{0<t\leq 2^{-m}}  |P_t^\mathcal{L} (x,y) -P_t(x,y)|dx
            \leq C. $$
        Indeed, if $x,y \in \R$, $x \neq y$, the function $w(t)=t/\left(t^2 + |x-y|^2\right)^{(n+1)/2}$,
        $t>0$, is increasing in the interval $(0,|x-y|/\sqrt{n})$ and it is decreasing in the interval
        $(|x-y|/\sqrt{n},\infty)$. If $x \in \R \setminus \widehat{B}_{(m,k)}$
        and $y \in B_{(m,k)}$, $|x-y| \geq \sqrt{n}2^{(4-m)/2}$. Hence, from \eqref{2.12} it follows that
        \begin{align*}
            & \sup_{y \in B_{(m,k)}} \int_{\R \setminus \widehat{B}_{(m,k)}} \sup_{0<t\leq 2^{-m}}  |P_t^\mathcal{L} (x,y)-P_t(x,y)| dx
                \leq C 2^{-m} \sup_{y \in B_{(m,k)}} \int_{\R \setminus \widehat{B}_{(m,k)}}
                    \frac{1}{\left( 2^{-2m} + |x-y|^2 \right)^{(n+1)/2}} dx \\
            & \qquad \qquad \leq C 2^{-m} \int_{\R \setminus B\left(0,\sqrt{n}2^{(4-m)/2}\right)}
                    \frac{1}{\left( 2^{-2m} + |u|^2 \right)^{(n+1)/2}} du
            \leq C \frac{2^{-m}}{2^{-m}+\sqrt{n}2^{(4-m)/2}}
            \leq C.
        \end{align*}

        By \eqref{(B)} and arguing as in \cite[Lemma 3.9]{DZ1}, we conclude that, for a certain $C>0$,
        $$\|\widetilde{\mathcal{M}}_m(\psi_{(m,k)} f)\|_{L^1(\R)}
            \leq C \| \psi_{(m,k)} f \|_{L^1(\R,Y)},
            \quad f \in L^1(\R,Y).$$

        Also, by proceeding as in the proof of \cite[Lemma 3.11]{DZ1} we can find $C>0$ such that
        $$\sum_{(m,k)} \|\mathcal{M}_{(m,k)}^\LL(f)\|_{L^1(\R)}
            \leq C \|  f \|_{L^1(\R,Y)},
            \quad f \in L^1(\R,Y).$$
        By combining the above estimates we deduce that
        $$\sum_{(m,k)} \|\mathcal{M}_m(\psi_{(m,k)}f)\|_{L^1(\R)}
            \leq C \left(\|  f \|_{L^1(\R,Y)} + \left\|\sup_{t>0} \| P_t^\mathcal{L} f \|_Y\right\|_{L^1(\R)} \right)
            < \infty,$$
        provided that $(ii)$ holds.\\

        Now the proof of $(ii) \Rightarrow (iii)$ can be finished as in \cite[Section 4]{DZ1}.
    \end{proof}

    In the next result we complete the last proposition characterizing the Hardy space by the maximal operator
    associated with the semigroup $\{P_t^{\mathcal{L} + \alpha}\}_{t>0}$.

    \begin{Prop}\label{Prop2.4}
        Let $Y$ be a Banach space and $\alpha > -n$. Suppose that $f \in L^1(\R,Y)$. Then $f \in H_\LL^1(\R,Y)$
        if, and only if, $\displaystyle \sup_{t>0} \|P_t^{\mathcal{L} + \alpha} (f)\|_Y \in L^1(\R)$.
    \end{Prop}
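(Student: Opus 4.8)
The plan is to reduce everything to Proposition~\ref{Prop2.3}. By part $(ii)$ of that proposition, a function $f\in L^1(\R,Y)$ belongs to $H^1_\mathcal{L}(\R,Y)$ if and only if $\sup_{t>0}\|P_t^\mathcal{L}(f)\|_Y\in L^1(\R)$, so it suffices to compare the two Poisson maximal functions. Since $f\in L^1(\R,Y)$ is given, the statement follows once we prove that the maximal operator of the difference
$$\mathcal{D}(f)(x)=\sup_{t>0}\left\|\left(P_t^{\mathcal{L}+\alpha}-P_t^\mathcal{L}\right)(f)(x)\right\|_Y,\quad x\in\R,$$
is bounded from $L^1(\R,Y)$ into $L^1(\R)$. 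Indeed, writing $P_t^{\mathcal{L}+\alpha}(f)=P_t^\mathcal{L}(f)+(P_t^{\mathcal{L}+\alpha}-P_t^\mathcal{L})(f)$ and applying the triangle inequality in both directions yields
$$\Big|\sup_{t>0}\|P_t^{\mathcal{L}+\alpha}(f)\|_Y-\sup_{t>0}\|P_t^\mathcal{L}(f)\|_Y\Big|\leq\mathcal{D}(f),$$
so that once $\mathcal{D}(f)\in L^1(\R)$ the two maximal functions are simultaneously integrable, and the equivalence follows.

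Since the kernels are nonnegative, $\mathcal{D}(f)(x)\leq\int_\R\big(\sup_{t>0}|P_t^{\mathcal{L}+\alpha}(x,y)-P_t^\mathcal{L}(x,y)|\big)\|f(y)\|_Y\,dy$, and by Fubini's theorem the boundedness of $\mathcal{D}$ reduces to the kernel estimate
$$\sup_{y\in\R}\int_\R\sup_{t>0}\big|P_t^{\mathcal{L}+\alpha}(x,y)-P_t^\mathcal{L}(x,y)\big|\,dx<\infty.$$
By subordination the difference kernel equals $\frac{t}{\sqrt{4\pi}}\int_0^\infty s^{-3/2}e^{-t^2/(4s)}(e^{-\alpha s}-1)W_s^\mathcal{L}(x,y)\,ds$. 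I would estimate this using \eqref{2.1}, splitting the $s$-integral at $s=1$. The essential point is the cancellation factor $e^{-\alpha s}-1$, which satisfies $|e^{-\alpha s}-1|\leq Cs$ for $0<s\leq1$; this extra power of $s$ improves the small-$s$ behaviour and, after carrying out the subordination integral, upgrades the Poisson-type bound $t/(t+|x-y|)^{n+1}$ to one of the order $e^{-c|x-y|^2}|x-y|^{2-n}$, whose $x$-integral converges uniformly in $y$. For $s>1$ one pulls out the $s$-independent factor $e^{-c|x-y|^2}$ from \eqref{2.1} and uses that $\alpha+n>0$, together with $e^{-ns}|e^{-\alpha s}-1|\leq C(e^{-(\alpha+n)s}+e^{-ns})$ and $te^{-t^2/(4s)}\leq C\sqrt{s}$, to bound the remaining integral by a constant.

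The main obstacle is precisely this difference-kernel estimate: a naive term-by-term comparison fails, because by \eqref{2.12} one has $\sup_{t>0}P_t^\mathcal{L}(x,y)\sim e^{-c|x-y|^2}|x-y|^{-n}$, which is not locally integrable, so $\int_\R\sup_{t>0}P_t^\mathcal{L}(x,y)\,dx$ diverges. One therefore genuinely needs the gain of two orders of homogeneity produced by the factor $e^{-\alpha s}-1$ near $s=0$, while the hypothesis $\alpha>-n$ is exactly what guarantees the convergence of the large-$s$ contribution. Once this estimate is established, $\mathcal{D}$ is bounded from $L^1(\R,Y)$ into $L^1(\R)$, and combined with Proposition~\ref{Prop2.3}$(ii)$ this concludes the proof.
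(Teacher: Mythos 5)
Your proposal is correct and takes essentially the same route as the paper: the paper likewise reduces to Proposition~\ref{Prop2.3} by proving that the difference maximal operator $L_\alpha(g)=\sup_{t>0}\|P_t^{\mathcal{L}+\alpha}(g)-P_t^{\mathcal{L}}(g)\|_Y$ is bounded from $L^1(\R,Y)$ into $L^1(\R)$, exploiting through the subordination formula the same cancellation you identify, in the form $|e^{-(\alpha+n)u}-e^{-nu}|\leq Cue^{-cu}$ (valid precisely because $\alpha+n>0$), to obtain a $t$-uniform kernel bound and then conclude by Fubini. The only discrepancy is cosmetic: the paper settles for the bound $Ce^{-c|x-y|^2}|x-y|^{-(n-1/2)}$, a gain of $1/2$ rather than your claimed gain of $2$ (which, as stated, acquires a logarithmic factor when $n=2$), but any locally integrable gain suffices for the argument.
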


    \begin{proof}
        We consider the operator $L_\alpha$ defined by
        $$L_\alpha(g)
            = \sup_{t>0} \left\| P_t^{\mathcal{L} + \alpha}(g) - P_t^{\mathcal{L}}(g) \right\|_Y,
            \quad g \in L^1(\R,Y).$$
        We can write
        $$L_\alpha(g)(x)
            = \sup_{t>0} \left\| \int_\R L_\alpha(x,y;t) g(y) dy \right\|_Y,
           \quad x \in \R, $$
        where
        $$L_\alpha(x,y;t)
            = \frac{t}{\sqrt{4 \pi}} \int_0^\infty \frac{e^{-t^2/4u}}{u^{3/2}} (e^{-\alpha u}-1) W_u^\mathcal{L}(x,y) du,
            \quad x,y \in \R \text{ and } t>0.$$
     From \eqref{2.1} and by taking into account that  $|e^{-(\alpha +n)u}-e^{-nu}|\leq Cue^{-cu}$, $u\in (0,\infty )$, we obtain that
      \begin{align*}
            |L_\alpha(x,y;t)|
                \leq & C t e^{-c|x-y|^2}\int_0^\infty \frac{e^{-c(t^2+|x-y|^2)/u}}{u^{3/2}} \frac{|e^{-(\alpha +n)u}-e^{-nu}|}{(1-e^{-4u})^{n/2}}du \\
                \leq & C t e^{-c|x-y|^2}\int_0^\infty \frac{e^{-c (|x-y|^2+t^2)/u}e^{-cu}}{u^{1/2}(1-e^{-4u})^{n/2}} du\\
                \leq & C te^{-c|x-y|^2} \int_0^\infty \frac{e^{-c (|x-y|^2+t^2)/u}}{u^{n/2+5/4}}du\leq  C  \frac{e^{-c|x-y|^2}}{|x-y|^{n-1/2}},
                \quad x,y \in \R, \ x \neq y \text{ and } t>0.
        \end{align*}
        Hence, for every $g \in L^1(\R,Y)$,
        $$\int_\R |L_\alpha(g)(x)| dx
            \leq C  \int_\R \int_\R  \frac{e^{-c|x-y|^2}}{|x-y|^{n-1/2}} \|g(y)\|_Y dy dx
            \leq C \|g\|_{L^1(\R,Y)}. $$
        This shows that $L_\alpha$ is a bounded (sublinear) operator from $L^1(\R,Y)$ into $L^1(\R)$.\\

        The proof of this property can be finished by using Proposition~\ref{Prop2.3}.
    \end{proof}

    As usual by $H^1(\R,\B)$ we denote the classical $\B$-valued Hardy space.

    \begin{Prop}\label{Prop2.5}
        Let $Y$ be a UMD Banach space and $\alpha >-n$. The (sublinear) operator $T_\alpha^\mathcal{L}$ defined by
        $$T_\alpha^\mathcal{L}(f)(x)
            = \sup_{s >0} \left\| P_s^{\mathcal{L}+\alpha} \mathcal{G}_{\mathcal{L}+\alpha,Y}(f)(x,\cdot) \right\|_{\gamma(H,Y)},$$
        is bounded from $H^1(\R,Y)$ into $L^1(\R)$ and from $L^1(\R,Y)$ into $L^{1,\infty}(\R)$.
    \end{Prop}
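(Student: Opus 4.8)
The plan is to exploit the semigroup property so as to recognise $T_\alpha^\LL$ as a \emph{maximal} Calder\'on-Zygmund operator with operator-valued kernel, and then to invoke the vector-valued Calder\'on-Zygmund theory. Since $P_s^{\LL+\alpha}$ acts on the spatial variable and commutes with $\partial_t$, for $f\in L^2(\R,Y)$ one has
$$ P_s^{\LL+\alpha}\mathcal{G}_{\LL+\alpha,Y}(f)(x,t)=t\partial_t P_{t+s}^{\LL+\alpha}(f)(x),\quad x\in\R,\ s,t>0. $$
Writing $F_s(x,y;t)=t\partial_t P_{t+s}^{\LL+\alpha}(x,y)$ and, exactly as in Subsection~\ref{subsec:2.1}, letting $\tau_s(x,y)\in L(Y,\gamma(H,Y))$ be the operator induced by $F_s(x,y;\cdot)\in H$ (so that $\|\tau_s(x,y)(b)\|_{\gamma(H,Y)}=\|F_s(x,y;\cdot)\|_H\|b\|_Y$ as in \eqref{f1}), I would realise $T_\alpha^\LL(f)(x)=\|U(f)(x)\|_{\mathbb{X}}$, where $\mathbb{X}=L^\infty((0,\infty),\gamma(H,Y))$, the operator $U$ is linear, and $U$ has the operator-valued kernel $\mathbb{K}(x,y)b=(s\mapsto\tau_s(x,y)b)\in L(Y,\mathbb{X})$. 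In this way the maximal structure is absorbed into the target space $\mathbb{X}$.

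First I would establish $L^2$-boundedness. Proceeding as in \eqref{2.0} (using \eqref{2.1} and $\alpha+n>0$) one obtains, analogously to \eqref{2.12}, $|P_s^{\LL+\alpha}(x,z)|\le C\,s(s+|x-z|)^{-(n+1)}$, which is controlled by a dilation of a fixed radially decreasing integrable kernel. Hence, for $g=\mathcal{G}_{\LL+\alpha,Y}(f)$,
$$ \sup_{s>0}\|P_s^{\LL+\alpha}(g)(x)\|_{\gamma(H,Y)}\le C\,M\big(\|g\|_{\gamma(H,Y)}\big)(x), $$
$M$ being the Hardy-Littlewood maximal operator. Combining the $L^2$-boundedness of $M$ with the boundedness of $\mathcal{G}_{\LL+\alpha,Y}$ from $L^2(\R,Y)$ into $L^2(\R,\gamma(H,Y))$ (\cite[Theorem 1]{BCCFR}, valid since $Y$ is UMD) gives $\|T_\alpha^\LL f\|_{L^2(\R)}\le C\|f\|_{L^2(\R,Y)}$, i.e. $U$ is bounded from $L^2(\R,Y)$ into $L^2(\R,\mathbb{X})$.

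Next come the kernel estimates, which must be uniform in $s$. From \eqref{2.0} (with $u$ in place of $t$) one reads off $|\partial_u P_u^{\LL+\alpha}(x,y)|\le Ce^{-c(|x-y|^2+(|x|+|y|)|x-y|)}(u+|x-y|)^{-(n+1)}$, so the choice $u=t+s$ yields $|F_s(x,y;t)|\le Ce^{-c|x-y|^2}\,t\,(t+s+|x-y|)^{-(n+1)}$, whence, taking the $H$-norm in $t$,
$$ \sup_{s>0}\|\tau_s(x,y)\|_{L(Y,\gamma(H,Y))}=\sup_{s>0}\|F_s(x,y;\cdot)\|_H\le C\,\frac{e^{-c|x-y|^2}}{|x-y|^n},\quad x\ne y. $$
Differentiating in $x_j$ and $y_j$ and using \eqref{derivW} as in Subsection~\ref{subsec:2.1}, the extra spatial derivative produces one further factor $(t+s+|x-y|)^{-1}$, so that
$$ \sup_{s>0}\|\partial_{x_j}\tau_s(x,y)\|_{L(Y,\gamma(H,Y))}+\sup_{s>0}\|\partial_{y_j}\tau_s(x,y)\|_{L(Y,\gamma(H,Y))}\le\frac{C}{|x-y|^{n+1}},\quad x\ne y. $$
Thus $\mathbb{K}$ satisfies the size bound $\|\mathbb{K}(x,y)\|_{L(Y,\mathbb{X})}\le C|x-y|^{-n}$ and the H\"ormander smoothness conditions in both variables.

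Finally, both conclusions follow. Since $U$ is bounded on $L^2$ and $\mathbb{K}$ is a standard operator-valued kernel, the vector-valued Calder\'on-Zygmund theory gives that $U$ maps $L^1(\R,Y)$ into $L^{1,\infty}(\R,\mathbb{X})$, which is precisely the weak-type bound for $T_\alpha^\LL$. For the $H^1(\R,Y)\to L^1(\R)$ bound I would use the atomic decomposition of the classical $Y$-valued Hardy space and check $\|T_\alpha^\LL a\|_{L^1(\R)}\le C$ uniformly over atoms $a$ supported in a ball $B=B(x_0,r)$ with $\int_\R a=0$: on a fixed dilate $2B$ one combines Cauchy-Schwarz with the $L^2$-bound, while on $\R\setminus 2B$ one inserts the cancellation $\int_\R a=0$ and uses the $y$-smoothness of the kernel through $\int_{\R\setminus 2B}\sup_{s>0}\|\tau_s(x,y)-\tau_s(x,x_0)\|_{L(Y,\gamma(H,Y))}\,dx\le C$. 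I expect the principal technical point to be the uniform-in-$s$ bookkeeping of the kernel estimates, namely verifying that replacing $t$ by $t+s$ inside the subordinated Mehler kernel spoils neither the $H$-integrability in $t$ nor the Gaussian spatial decay, and that each spatial derivative only improves the decay; once this is secured, the maximal operator is handled by the standard theory on the space $\mathbb{X}$.
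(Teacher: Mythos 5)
Your proposal follows the paper's proof in all of its structural choices: the identity $P_s^{\LL+\alpha}\mathcal{G}_{\LL+\alpha,Y}(f)(x,t)=t\partial_t P_{t+s}^{\LL+\alpha}(f)(x)$, the kernel $\Omega_\alpha(x,y;s,t)=t\partial_t P_{t+s}^{\LL+\alpha}(x,y)$ with size and gradient estimates uniform in $s$ (the paper's \eqref{2.16}--\eqref{2.20}), the $L^2$ bound obtained by dominating $\sup_{s>0}\|P_s^{\LL+\alpha}(g)\|_{\gamma(H,Y)}$ by a maximal function and invoking \cite[Theorem 1]{BCCFR}, and the appeal to operator-valued Calder\'on--Zygmund theory \cite{RRT}. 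However, your final step --- running the Calder\'on--Zygmund theory directly with target $\mathbb{X}=L^\infty((0,\infty),\gamma(H,Y))$ --- skips what is in fact the main technical content of the paper's proof, and this is a genuine gap. The space $L^\infty((0,\infty),\gamma(H,Y))$ is not separable, so neither the strong ($\mathbb{X}$-Bochner) measurability needed to make sense of $U(f)$ and of the kernel representation $\int_\R \mathbb{K}(x,y)f(y)\,dy$ in $\mathbb{X}$, nor --- more seriously --- the identification of the operator that the CZ theory abstractly extends to $L^1(\R,Y)$ with the concretely, pointwise defined maximal function $T_\alpha^\LL$, is automatic. The weak $(1,1)$ inequality produced by the theory is an estimate for the extension; it says nothing about $\sup_{s>0}\|P_s^{\LL+\alpha}\mathcal{G}_{\LL+\alpha,Y}(f)(x,\cdot)\|_{\gamma(H,Y)}$ for a general $f\in L^1(\R,Y)$ until one proves that extension and pointwise operator agree a.e. This is precisely why the paper works instead in the separable spaces $C([1/N,N],\gamma(H,Y))$, checks that the CZ constants are independent of $N$, identifies $\widetilde{\T}_{\alpha,N}^\LL(g)(x,s,\cdot)=\T_\alpha^\LL(g)(x,s,\cdot)$ a.e.\ for $s\in[1/N,N]\cap\mathbb{Q}$ by extracting two successive a.e.-convergent subsequences (one using the weak type of $\mathcal{G}_{\LL+\alpha,Y}$ composed with $P_s^{\LL+\alpha}$ at fixed $s$, one using the uniform bound for $\widetilde{\T}_{\alpha,N}^\LL$), and only then lets $N\to\infty$ through the monotone union of the level sets. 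Without a substitute for this truncation-and-identification argument your weak type conclusion is not proved.

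Concerning the $H^1(\R,Y)\to L^1(\R)$ bound, your atomic argument (Cauchy--Schwarz with the $L^2$ bound on $2B$, cancellation plus the H\"ormander condition $\int_{\R\setminus 2B}\sup_{s>0}\|\tau_s(x,y)-\tau_s(x,x_0)\|_{L(Y,\gamma(H,Y))}\,dx\leq C$ off $2B$) is sound in outline and is a mild deviation from the paper, which derives $H^1\to L^1$ from the same CZ machinery on $C([1/N,N],\gamma(H,Y))$ with the same limiting procedure. But note that the atomic route does not bypass the gap above: to conclude for an arbitrary $f=\sum_j\lambda_j a_j\in H^1(\R,Y)$ that $T_\alpha^\LL(f)\leq\sum_j|\lambda_j|\,T_\alpha^\LL(a_j)$ a.e., you must again commute the sum with $P_s^{\LL+\alpha}\mathcal{G}_{\LL+\alpha,Y}$ and with $\sup_{s>0}$, which requires the same kind of fixed-$s$ identification and passage to a countable dense set of $s$ that the paper carries out. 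So the estimates you list are all correct and match the paper's, but the measurability/identification step in the non-separable target is missing and must be supplied.
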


    \begin{proof}
        In order to show this property we use Banach valued Calder\'on-Zygmund theory (\cite{RRT}).\\

        As in \eqref{2.0} we can see that
        $$P_t^{\mathcal{L}+\alpha}(x,y)
                \leq C \frac{t}{\left( t + |x-y| \right)^{n+1}},
            \quad x,y \in \R \text{ and } t>0.$$
        Hence, it follows that
        $$\sup_{t>0} \| P_t^{\mathcal{L}+\alpha}(g) \|_{Y}
            \leq C \sup_{t>0}  P_t(\|g\|_{Y}),
            \quad g \in L^p(\R,Y), \ 1 \leq p < \infty,$$
        and from well-known results we deduce that the maximal operator
        $$P_*^{\mathcal{L}+\alpha}(g)
            = \sup_{t>0} \| P_t^{\mathcal{L}+\alpha}(g) \|_{Y},$$
        is bounded from $L^p(\R,Y)$ into $L^p(\R)$, for every $1<p<\infty$, and from $L^1(\R,Y)$ into $L^{1,\infty}(\R)$.\\

        Moreover, according to \cite[Theorem 1]{BCCFR}
        the operator $\mathcal{G}_{\mathcal{L}+\alpha,Y}$ is bounded from
        \begin{itemize}
            \item $L^p(\R,Y)$ into $L^p(\R,\gamma(H,Y))$, $1<p<\infty$,
            \item $L^1(\R,Y)$ into $L^{1,\infty}(\R,\gamma(H,Y))$, and
            \item $H^1(\R,Y)$ into $L^1(\R,\gamma(H,Y))$.
        \end{itemize}
        Hence, if we define the operator $\T_\alpha^\mathcal{L}$ by
        $$\T_\alpha^\mathcal{L}(f)(x,s,t)
            = P_s^{\mathcal{L}+\alpha} \mathcal{G}_{\mathcal{L}+\alpha,Y}(f)(x,t), \quad x \in \R, \ s,t >0,$$
        it is bounded from $L^p(\R,Y)$ into $L^p(\R,L^\infty((0,\infty),\gamma(H,Y)))$, $1<p<\infty$, and from
        $H^1(\R,Y)$ into $L^{1,\infty}(\R,L^\infty((0,\infty),\gamma(H,Y)))$.\\

        We are going to show that $\T^\mathcal{L}_\alpha$ is bounded from $L^1(\R,Y)$ into
        $L^{1,\infty}(\R,L^\infty((0,\infty),\gamma(H,Y)))$ and from $H^1(\R,Y)$ into $L^1(\R,L^\infty((0,\infty),\gamma(H,Y)))$.\\

       We consider the function
        \begin{equation}\label{omega}
            \Omega_\alpha(x,y;s,t)
                = t \partial_t P_{t+s}^{\mathcal{L}+\alpha }(x,y),\quad x,y \in \R \text{ and } s,t>0.
        \end{equation}
        It follows from \eqref{2.0} that
        \begin{equation}\label{2.16}
            |\Omega_\alpha(x,y;s,t)|
                \leq C \frac{t}{\left( s+t + |x-y| \right)^{n+1}},
                \quad x,y \in \R \text{ and } s,t>0.
        \end{equation}
        Let $j=1, \dots, n$. By (\ref{derivW}) we get
       \begin{align}\label{2.17}
            |\partial_{x_j} \Omega_\alpha(x,y;s,t)|
                \leq & C t \int_0^\infty \frac{1}{u^{(n+4)/2}} e^{-c(|x-y|^2+(s+t)^2)/u} du \nonumber\\
                \leq & C \frac{t}{\left( s+t + |x-y| \right)^{n+2}},
                \quad x,y \in \R \text{ and } s,t>0.
        \end{align}
        By taking into account the symmetries we also have that
        \begin{align}\label{2.18}
            |\partial_{y_j} \Omega_\alpha(x,y;s,t)|
                \leq & C \frac{t}{\left( s+t + |x-y| \right)^{n+2}},
                \quad x,y \in \R \text{ and } s,t>0.
        \end{align}

        Let $N \in \mathbb{N}$ and $C([1/N,N],Y)$ be the space of continuous functions over the interval $[1/N,N]$ which take values in the Banach space $Y$.
        The function $\Omega_\alpha(x,y;s,t)$ satisfies the following Calder\'on-Zygmund type estimates
        \begin{equation}\label{2.19}
            \|\Omega_\alpha(x,y;\cdot,\cdot)\|_{C([1/N,N],H)}
                \leq \|\Omega_\alpha(x,y;\cdot,\cdot)\|_{L^\infty((0,\infty),H)}
                \leq \frac{C}{|x-y|^n},
                \quad x,y \in \R, \ x \neq y,
        \end{equation}
        and
        \begin{align}\label{2.20}
            & \|\nabla_x \Omega_\alpha(x,y;\cdot,\cdot)\|_{C([1/N,N],H)} + \|\nabla_y \Omega_\alpha(x,y;\cdot,\cdot)\|_{C([1/N,N],H)} \nonumber \\
            & \qquad \qquad \leq \|\nabla_x \Omega_\alpha(x,y;\cdot,\cdot)\|_{L^\infty((0,\infty),H)} + \|\nabla_y \Omega_\alpha(x,y;\cdot,\cdot)\|_{L^\infty((0,\infty),H)} \nonumber \\
            & \qquad \qquad \leq \frac{C}{|x-y|^{n+1}},
                \quad x,y \in \R, \ x \neq y.
        \end{align}
        Note that the constant $C$ does not depend on $N$. Indeed, by \eqref{2.16} we get
        \begin{align*}
            \|\Omega_\alpha(x,y;\cdot,\cdot)\|_{L^\infty((0,\infty),H)}
                \leq & C \sup_{s>0} \left( \int_0^\infty \frac{t}{\left( (s+t)^2 + |x-y|^2 \right)^{n+1}} dt \right)^{1/2} \\
                \leq & C \left( \int_0^\infty \frac{dt}{\left( t + |x-y| \right)^{2n+1}} \right)^{1/2}
                \leq \frac{C}{|x-y|^n},
                \quad x,y \in \R, \ x \neq y.
        \end{align*}
        and \eqref{2.19} is established. In a similar way we can deduce \eqref{2.20} from \eqref{2.17} and \eqref{2.18}.\\

        Suppose now that $g \in L^\infty_c(\R)$. By \eqref{2.19} it is clear that
        $$\int_\R \|\Omega_\alpha(x,y;\cdot,\cdot)\|_{C([1/N,N],H)} |g(y)| dy
            < \infty,
            \quad x \notin \supp(g).$$
        We define
        $$S_\alpha(g)(x)
            = \int_\R \Omega_\alpha(x,y;\cdot,\cdot) g(y) dy,
            \quad x \notin \supp(g), $$
        where the integral is understood in the $C([1/N,N],H)$-Bochner sense. We have that
        $$\left[S_\alpha(g)(x)\right](s,\cdot)
            = \int_\R \Omega_\alpha(x,y;s,\cdot) g(y) dy,
            \quad x \notin \supp(g) \text{ and } s \in [1/N,N]. $$
        Here the equality and the integral are understood in $H$ and in the $H$-Bochner sense, respectively.\\

        For every $h \in H$, we can write
        \begin{align*}
            \left\langle h , \int_\R \Omega_\alpha(x,y;s,\cdot) g(y) dy \right\rangle_{H,H}
                = & \int_\R \int_0^\infty \Omega_\alpha(x,y;s,t) h(t) \frac{dt}{t} g(y) dy \\
                = & \int_0^\infty \int_\R  \Omega_\alpha(x,y;s,t) g(y) dy h(t) \frac{dt}{t},
                \quad x \notin \supp(g) \text{ and } s \in [1/N,N]. \\
        \end{align*}
        Hence, for every $x \notin \supp(g)$ and $s>0$,
        $$\int_\R  \Omega_\alpha(x,y;s,t) g(y) dy
            = \left(\int_\R  \Omega_\alpha(x,y;s,\cdot) g(y) dy \right)(t),$$
        as elements of $H$.\\

        We have proved that
        $$P_s^{\mathcal{L}+\alpha} \mathcal{G}_{\mathcal{L}+\alpha,\mathbb{C}}(g)(x,\cdot)
            = \left[S_\alpha(g)(x)\right](s,\cdot),
            \quad x \notin \supp(g)\mbox{ and }s\in [1/N,N], $$
        in the sense of equality in $H$.\\

        Assume that $g=\sum_{j=1}^m b_j g_j$, where $b_j \in Y$ and $g_j \in L_c^\infty(\R)$, $j=1, \dots, m \in \mathbb{N}$.
        Then,
        \begin{align*}
            P_s^{\mathcal{L}+\alpha} \mathcal{G}_{\mathcal{L}+\alpha,Y}(g)(x,\cdot)
                = & \sum_{j=1}^m b_j P_s^{\mathcal{L}+\alpha} \mathcal{G}_{\mathcal{L}+\alpha,\mathbb{C}}(g_j)(x,\cdot)
                =  \sum_{j=1}^m b_j \left[S_\alpha(g_j)(x)\right](s,\cdot) \\
                = & \left(\int_\R  \Omega_\alpha(x,y;\cdot,\cdot) g(y) dy \right)(s,\cdot),
                \quad x \notin \supp(g)\mbox{ and }s\in [1/N,N],
        \end{align*}
        where the last integral is understood in the $C\left([1/N,N],\gamma(H,Y)\right)$-Bochner sense.\\

        According to Banach valued Calder\'on-Zygmund theory (see \cite{RRT}) we deduce that the operator $\T_\alpha^\LL$
        can be extended from
        \begin{itemize}
            \item $L^2(\R,Y) \cap L^1(\R,Y)$ to $L^1(\R,Y)$ as a bounded operator from  $L^1(\R,Y)$ into \\
            $L^{1,\infty}(\R,C([1/N,N],\gamma(H,Y)))$, and as
            \item  a bounded operator from  $H^1(\R,Y)$ into $L^{1}(\R,C([1/N,N],\gamma(H,Y)))$.
        \end{itemize}
        Moreover, if we denote by $\widetilde{\T}_{\alpha,N}^\LL$ the extension of $\T_\alpha^\LL$
        to $L^1(\R,Y)$ there exists $C>0$ independent of $N$ such that
        $$\|\widetilde{\T}_{\alpha,N}^\LL\|_{L^1(\R,Y) \to L^{1,\infty}(\R,C([1/N,N],\gamma(H,Y)))} \leq C$$
        and
        $$\|\widetilde{\T}_{\alpha,N}^\LL\|_{H^1(\R,Y) \to L^{1}(\R,C([1/N,N],\gamma(H,Y)))} \leq C.$$

        Let $g \in L^1(\R,Y)$ and let $(g_k)_{k \in \mathbb{N}}$ be a sequence in $L^1(\R,Y)\cap L^2(\R,Y)$ such that
        $$g_k \longrightarrow g, \quad  \text{as } k \to \infty,  \text{ in } L^1(\R,Y).$$
        It is not difficult to see that
        $$ \T_\alpha^\LL(g)(x,s,t)
            =  \mathcal{G}_{\mathcal{L}+\alpha,Y} \left(P_s^{\mathcal{L}+\alpha} (g) \right)(x,t),
            \quad x \in \R \text{ and } s,t>0,$$
        and
        $$ \T_\alpha^\LL(g_k)(x,s,t)
            =  \mathcal{G}_{\mathcal{L}+\alpha,Y} \left(P_s^{\mathcal{L}+\alpha} (g_k) \right)(x,t),
            \quad x \in \R, \ s,t>0 \text{ and } k \in \mathbb{N}.$$
        Hence, since $P_s^{\mathcal{L}+\alpha}$ is bounded from $L^1(\R,Y)$ into itself,
        for every $s>0$, and $\mathcal{G}_{\mathcal{L}+\alpha,Y}$
        is bounded from $L^1(\R,Y)$ into $L^{1,\infty}(\R,\gamma(H,Y))$ (\cite[Theorem 1]{BCCFR}),
        $$ \T_\alpha^\LL(g_k)(\cdot,s,\cdot)
            \longrightarrow \T_\alpha^\LL(g)(\cdot,s,\cdot),
            \quad \text{ as } k \to \infty, \text{ in } L^{1,\infty}(\R,\gamma(H,Y)),$$
        for every $s>0$. Moreover, we can find a subsequence $(g_{k_{\ell}})_{\ell \in \mathbb{N}}$
        of $(g_{k})_{k \in \mathbb{N}}$ verifying that for every $s \in \mathbb{Q}$,
        $$\T_\alpha^\LL(g_{k_\ell})(x,s,\cdot)
            \longrightarrow \T_{\alpha}^\LL(g)(x,s,\cdot),
            \quad \text{ as } \ell \to \infty, \text{ in } \gamma(H,Y),$$
        a.e. $x \in \R$. On the other hand,
        $$\T_\alpha^\LL(g_{k_\ell}) = \widetilde{\T}_{\alpha,N}^\LL(g_{k_\ell})
            \longrightarrow \widetilde{\T}_{\alpha,N}^\LL(g),
            \quad \text{ as } \ell \to \infty, \text{ in } L^{1,\infty}(\R, C([1/N,N],\gamma(H,Y))),$$
        and then, there exists a subsequence $(g_{k_{\ell_j}})_{j \in \mathbb{N}}$
        of $(g_{k_{\ell}})_{\ell \in \mathbb{N}}$ such that, for every $s \in [1/N,N]$,
        $$\T_\alpha^\LL(g_{k_{\ell_j}})(x,s,\cdot)
            \longrightarrow \widetilde{\T}_{\alpha,N}^\LL(g)(x,s,\cdot),
            \quad \text{ as } j \to \infty, \text{ in } \gamma(H,Y),$$
        a.e. $x \in \R$. Thus, for every $s \in [1/N,N] \cap \mathbb{Q}$,
        $$\widetilde{\T}_{\alpha,N}^\LL(g)(x,s,\cdot)
            = \T_{\alpha}^\LL(g)(x,s,\cdot),
            \quad \text{a.e. } x \in \R, \text{ in }\gamma(H,Y).$$

        Finally,
        \begin{align*}
           & \left| \left\{ x \in \R : \sup_{s>0} \|\T_{\alpha}^\LL(g)(x,s,\cdot)\|_{\gamma(H,Y)} > \lambda \right\} \right|
                \leq  \left| \bigcup_{N \in \mathbb{N}} \left\{ x \in \R : \sup_{s \in [1/N,N]} \|\T_{\alpha}^\LL(g)(x,s,\cdot)\|_{\gamma(H,Y)} > \lambda \right\} \right| \\
           & \qquad \qquad = \lim_{N \to \infty} \left| \left\{ x \in \R : \sup_{s \in [1/N,N] \cap \mathbb{Q}} \|\T_{\alpha}^\LL(g)(x,s,\cdot)\|_{\gamma(H,Y)} > \lambda \right\} \right| \\
           & \qquad \qquad = \lim_{N \to \infty} \left| \left\{ x \in \R : \sup_{s \in [1/N,N] \cap \mathbb{Q}} \|\widetilde{\T}_{\alpha ,N}^\LL(g)(x,s,\cdot)\|_{\gamma(H,Y)} > \lambda \right\} \right| \\
           & \qquad \qquad \leq \frac{C}{\lambda} \|g\|_{L^1(\R,Y)} , \quad \lambda >0,
        \end{align*}
        and we conclude that $\T_\alpha^\LL$ is bounded from $L^1(\R,Y)$ into $L^{1,\infty}(\R,L^\infty((0,\infty),\gamma(H,Y)))$.\\

        By proceeding in a similar way we can show that $\T_\alpha^\LL$ is also bounded from $H^1(\R,Y)$ into
        $L^{1}(\R,L^\infty((0,\infty),\gamma(H,Y)))$.
    \end{proof}

    We now establish that $\mathcal{G}_{\mathcal{L}+\alpha,\B}$ is bounded from $H^1_\LL(\R,\B)$ into  $H^1_\LL(\R,\gamma(H,\B))$.
    According to Proposition~\ref{Prop2.4} it is sufficient to show that
    $\mathcal{G}_{\mathcal{L}+\alpha,\B}(f) \in L^1(\R,\gamma(H,\B))$, for every $f \in H^1_\LL(\R,\B)$, and that the operator
    $$T_\alpha^\mathcal{L}(f)(x)
            = \sup_{s >0} \left\| P_s^{\mathcal{L}+\alpha} \mathcal{G}_{\mathcal{L}+\alpha,\B}(f)(x,\cdot) \right\|_{\gamma(H,\B)},$$
    is bounded from $H^1_\LL(\R,\B)$ into $L^1(\R)$.\\

    First of all, we are going to see that $\mathcal{G}_{\mathcal{L}+\alpha,\B}$ is a bounded operator from
    $H^1_\LL(\R,\B)$ into $L^1(\R,$ $\gamma(H,\B))$. By \cite[Theorem 1]{BCCFR},
    $\mathcal{G}_{\mathcal{L}+\alpha,\B}$
    is bounded from $H^1(\R,\B)$ into $L^1(\R,\gamma(H,\B))$. Hence, if $a$ is an atom for $H^1_\LL(\R,\B)$
    such that $\int_\R a(x)dx=0$, then
    $$\|\mathcal{G}_{\mathcal{L}+\alpha,\B}(a)\|_{L^1(\R,\gamma(H,\B))}
        \leq C \|a\|_{L^1(\R,\B)}
        \leq C,$$
    where $C>0$ does not depend on the atom $a$.\\

    Suppose now that $a$ is an atom for $H^1_\LL(\R,\B)$ such that $\supp(a) \subset B=B(x_0,r_0)$, where $x_0 \in  \R$ and
    $\rho(x_0)/2 \leq r_0 \leq \rho(x_0)$, and that $\|a\|_{L^\infty(\R,\B)} \leq |B|^{-1} $.
    Since $\mathcal{G}_{\mathcal{L}+\alpha,\B}$ is a bounded operator from $L^2(\R,\B)$ into $L^2(\R,\gamma(H,\B))$
    (\cite[Theorem 1]{BCCFR}),
    we have that
    \begin{align}\label{2.21}
        \int_{B^*} \|\mathcal{G}_{\mathcal{L}+\alpha,\B}(a)(x,\cdot)\|_{\gamma(H,\B)} dx
            \leq & |B^*|^{1/2} \left( \int_{B^*} \|\mathcal{G}_{\mathcal{L}+\alpha,\B}(a)(x,\cdot)\|_{\gamma(H,\B)}^2 dx\right)^{1/2} \nonumber \\
            \leq & C |B|^{1/2} \left( \int_{B} \|a(x)\|_\B^2 dx\right)^{1/2}
            \leq C,
    \end{align}
    being $B^*=B(x_0,2r_0)$.\\

    Moreover, if $y \in B$ and $x \notin B^*$, it follows that
    $|x-y|\geq r_0 \geq \rho(x_0)/2$ and $\rho(y) \sim \rho (x_0)$.
    Then, by taking into account \eqref{2.0.1}, \eqref{f1} and \eqref{A}  we get
    \begin{align}\label{2.22}
        \int_{\R \setminus B^*} \|\mathcal{G}_{\mathcal{L}+\alpha,\B}(a)(x,\cdot)\|_{\gamma(H,\B)} dx
            \leq & \int_{\R \setminus B^*} \int_B \|t \partial_t P_t^{\LL + \alpha}(x,y)\|_H \|a(y)\|_\B dy dx \nonumber \\
            \leq & C \int_{\R \setminus B^*} \int_B \frac{e^{-c(|x-y|^2+|y||x-y|)}}{|x-y|^n} \|a(y)\|_\B dy dx \nonumber \\
            \leq & C \int_B \|a(y)\|_\B  \sum_{j=0}^\infty \int_{2^{j-1}\rho(x_0) \leq |x-y| < 2^{j}\rho(x_0)}
                \frac{dx dy}{|x-y|^{n+1/2}\rho(x_0)^{-1/2}}   \nonumber \\
            \leq & C   \sum_{j=0}^\infty \frac{1}{(2^{j}\rho(x_0))^{1/2}\rho(x_0)^{-1/2}}
            \leq C \sum_{j=0}^\infty \frac{1}{2^{j/2}}
            \leq C.
    \end{align}
    From \eqref{2.21} and \eqref{2.22} we infer that
    $$\|\mathcal{G}_{\mathcal{L}+\alpha,\B}(a)\|_{L^1(\R,\gamma(H,\B))}
        \leq C,$$
    where $C>0$ does not depend on $a$.\\

    We consider $f=\sum_{j=1}^\infty \lambda_j a_j$, where $a_j$ is an atom for $H^1_\LL(\R,\B)$ and $\lambda_j \in  \mathbb{C}$,
    $j \in \mathbb{N}$, being $\sum_{j=1}^\infty |\lambda_j|<\infty$. The series converges in $L^1(\R,\B)$.
    Hence, as a consequence of \cite[Theorem 1]{BCCFR}, we have that
    $$\mathcal{G}_{\mathcal{L}+\alpha,\B}(f)
        =  \sum_{j=1}^\infty \lambda_j \mathcal{G}_{\mathcal{L}+\alpha,\B}(a_j),$$
    as elements of $L^{1,\infty}(\R,\gamma(H,\B))$. Also,
    \begin{align*}
        \left\| \mathcal{G}_{\mathcal{L}+\alpha,\B}(f) \right\|_{L^1(\R,\gamma(H,\B))}
        \leq & \sum_{j=1}^\infty |\lambda_j| \left\| \mathcal{G}_{\mathcal{L}+\alpha,\B}(a_j) \right\|_{L^1(\R,\gamma(H,\B))}
        \leq  C \sum_{j=1}^\infty |\lambda_j|,
    \end{align*}
    where $C>0$ does not depend on $f$. Thus,
    $$\| \mathcal{G}_{\mathcal{L}+\alpha,\B}(f)\|_{L^1(\R,\gamma(H,\B))}
        \leq C \|f\|_{H^1_\LL(\R,\B)}.$$

    Finally, to show that $T_\alpha^\mathcal{L}$ is bounded from $H^1_\LL(\R,\B)$ into $L^1(\R)$ we can proceed as above
    by considering the action of the operator on the two types of atoms of $H^1_\LL(\R,\B)$, and taking in mind the following facts,
    which can be deduced from the proof of Proposition~\ref{Prop2.5}:
    \begin{itemize}
        \item $T_\alpha^\mathcal{L}$ is bounded from $H^1(\R,\B)$ into $L^1(\R)$,
        \item $T_\alpha^\mathcal{L}$ is bounded from $L^2(\R,\B)$ into $L^2(\R)$,
        \item $P_s^{\mathcal{L}+\alpha} \mathcal{G}_{\mathcal{L}+\alpha,\B}$ can be associated to an integral operator with kernel $\Omega_\alpha$ (see \eqref{omega}) verifying that
        $$\sup_{s>0} \| \Omega_\alpha(x,y,s,\cdot)\|_H \leq C \frac{e^{-c(|x-y|^2+|y| |x-y|)}}{|x-y|^n}, \quad x, y \in \R, \ x \neq y,$$
        \item $T_\alpha^\mathcal{L}$ is bounded from $L^1(\R,\B)$ into $L^{1,\infty}(\R)$.
    \end{itemize}

    \subsection{}\label{subsec:2.3}

    Our next objective is to see that there exists $C>0$ such that
    \begin{equation}\label{2.B.0}
        \|f\|_{BMO_\mathcal{L}(\R,\B)}
            \leq C \|\mathcal{G}_{\mathcal{L}+\alpha,\B}(f)\|_{BMO_\mathcal{L}(\R,\gamma(H,\B))},
            \quad f \in BMO_\mathcal{L}(\R,\B).
    \end{equation}
    In order to prove this we need to establish the following polarization equality.

    \begin{Prop}\label{Prop2.1}
        Let $\B$ be a Banach space. If $a \in L^\infty_c(\R) \otimes \B^*$ and
        $f \in BMO_\mathcal{L}(\R,\B)$,  then
        $$\int_0^\infty \int_{\R}
                \langle \mathcal{G}_{\mathcal{L}+\alpha,\B^*}(a)(x,t) , \mathcal{G}_{\mathcal{L}+\alpha,\B}(f)(x,t) \rangle_{\B^*,\B} \frac{dx dt}{t}
            = \frac{1}{4} \int_{\R} \langle a(x) , f(x) \rangle_{\B^*,\B}dx.$$
    \end{Prop}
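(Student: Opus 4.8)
The plan is to reduce the statement to a scalar $L^2$ polarization identity and then pass to the $BMO_\LL$ setting by truncation. Since $a\in L^\infty_c(\R)\otimes\B^*$, we may write $a=\sum_{i=1}^m\phi_i\,b_i^*$ with $\phi_i\in L^\infty_c(\R)$ and $b_i^*\in\B^*$. Because the Poisson semigroup acts through the scalar kernel $t\partial_tP_t^{\LL+\alpha}(x,y)$, we have $\mathcal{G}_{\LL+\alpha,\B^*}(a)(x,t)=\sum_{i=1}^m\mathcal{G}_{\LL+\alpha,\mathbb{C}}(\phi_i)(x,t)\,b_i^*$, and since bounded functionals commute with the Bochner integral defining $\mathcal{G}_{\LL+\alpha,\B}(f)$, we get $\langle b_i^*,\mathcal{G}_{\LL+\alpha,\B}(f)(x,t)\rangle_{\B^*,\B}=\mathcal{G}_{\LL+\alpha,\mathbb{C}}(g_i)(x,t)$, where $g_i:=\langle b_i^*,f(\cdot)\rangle_{\B^*,\B}$. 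A direct check on the two defining inequalities shows $g_i\in BMO_\LL(\R)$ whenever $f\in BMO_\LL(\R,\B)$, since $(g_i)_{B}=\langle b_i^*,f_B\rangle$ and $|g_i(z)-(g_i)_B|\le\|b_i^*\|_{\B^*}\|f(z)-f_B\|_\B$. Thus the left-hand side equals $\sum_{i=1}^m\int_0^\infty\int_\R\mathcal{G}_{\LL+\alpha,\mathbb{C}}(\phi_i)\,\mathcal{G}_{\LL+\alpha,\mathbb{C}}(g_i)\,\frac{dx\,dt}{t}$, and, using linearity of the pairing, it suffices to prove, for $\phi\in L^\infty_c(\R)$ and $g\in BMO_\LL(\R)$,
\begin{equation*}
\int_0^\infty\int_\R\mathcal{G}_{\LL+\alpha,\mathbb{C}}(\phi)(x,t)\,\mathcal{G}_{\LL+\alpha,\mathbb{C}}(g)(x,t)\,\frac{dx\,dt}{t}=\frac14\int_\R\phi(x)g(x)\,dx.
\end{equation*}

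First I would settle the case $\phi,\psi\in L^2(\R)$. Expanding in the Hermite basis, $\phi=\sum_k c_kh_k$ and $\psi=\sum_k d_kh_k$, and writing $\lambda_k=\sqrt{2|k|+n+\alpha}$ (which is positive because $\alpha>-n$), one has $\mathcal{G}_{\LL+\alpha,\mathbb{C}}(\phi)(x,t)=-\sum_k c_k\,t\lambda_ke^{-t\lambda_k}h_k(x)$ and similarly for $\psi$. Since the square function is bounded on $L^2(\R)$, both $\mathcal{G}_{\LL+\alpha,\mathbb{C}}(\phi)$ and $\mathcal{G}_{\LL+\alpha,\mathbb{C}}(\psi)$ lie in $L^2(\R\times(0,\infty),\frac{dx\,dt}{t})$, and the pairing there is computed by orthonormality of $\{h_k\}$ and the elementary identity $\int_0^\infty t\,e^{-2t\lambda_k}\,dt=\tfrac{1}{4\lambda_k^2}$, yielding $\sum_k c_kd_k\lambda_k^2\cdot\tfrac{1}{4\lambda_k^2}=\tfrac14\sum_k c_kd_k=\tfrac14\int_\R\phi\psi\,dx$. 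This is the desired identity on $L^2(\R)$.

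To pass from $L^2$ to $BMO_\LL$ I would truncate the $BMO$ factor. By John--Nirenberg, $g$ is locally square integrable, so $g_R:=g\,\chi_{B(0,R)}\in L^2(\R)$; choosing $R$ so large that $\supp\phi\subset B(0,R)$, the $L^2$ identity gives $\int_0^\infty\int_\R\mathcal{G}_{\LL+\alpha,\mathbb{C}}(\phi)\mathcal{G}_{\LL+\alpha,\mathbb{C}}(g_R)\frac{dx\,dt}{t}=\frac14\int_\R\phi g_R=\frac14\int_\R\phi g$. It then remains to show that the tail $\int_0^\infty\int_\R\mathcal{G}_{\LL+\alpha,\mathbb{C}}(\phi)\,\mathcal{G}_{\LL+\alpha,\mathbb{C}}(g-g_R)\frac{dx\,dt}{t}$ tends to $0$ as $R\to\infty$. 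Here I would bound the pairing pointwise in $x$ by $\|\mathcal{G}_{\LL+\alpha,\mathbb{C}}(\phi)(x,\cdot)\|_H\,\|\mathcal{G}_{\LL+\alpha,\mathbb{C}}(g-g_R)(x,\cdot)\|_H$ via the Cauchy--Schwarz inequality in $H$, use the kernel estimate \eqref{2.0.1} together with the compact support of $\phi$ to obtain the Gaussian decay $\|\mathcal{G}_{\LL+\alpha,\mathbb{C}}(\phi)(x,\cdot)\|_H\le Ce^{-c|x|^2}$ for $|x|$ large, and estimate $\|\mathcal{G}_{\LL+\alpha,\mathbb{C}}(g-g_R)(x,\cdot)\|_H\le C\int_{|y|>R}\frac{e^{-c(|x-y|^2+|y||x-y|)}}{|x-y|^n}|g(y)|\,dy$. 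Combining these with the $BMO_\LL$ growth bound $\int_\R|g(y)|/(1+|y|)^{n+1}\,dy<\infty$ forces the tail to vanish. Summing over $i$ and recalling $\sum_i\phi_i b_i^*=a$ gives the proposition.

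The main obstacle is precisely this last step: the pairing is \emph{not} absolutely convergent by a naive Cauchy--Schwarz argument in $L^2(\frac{dx\,dt}{t})$, since $\mathcal{G}_{\LL+\alpha,\mathbb{C}}(g)$ need not belong to that space when $g$ lies only in $BMO_\LL(\R)$. The saving feature is that $\phi$ is compactly supported, so the Hermite--Gaussian kernel makes $\mathcal{G}_{\LL+\alpha,\mathbb{C}}(\phi)(x,\cdot)$ decay like $e^{-c|x|^2}$ in the spatial variable; matched against the controlled growth of $g$ encoded in the $BMO_\LL$ condition, this is what renders both the full integral absolutely convergent and the truncation error negligible.
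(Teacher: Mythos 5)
Your reduction to the scalar case is correct and coincides with the paper's own tensor argument, and your $L^2\times L^2$ polarization identity is also correct: the spectral computation $\int_0^\infty t\,e^{-2t\lambda_k}dt=1/(4\lambda_k^2)$ with $\lambda_k=\sqrt{2|k|+n+\alpha}$ is exactly the identity the paper extracts, though the paper reaches it by a different route (it truncates the \emph{time} integral to $\int_{1/N}^N$, composes two copies of $\mathcal{G}_{\LL+\alpha,\mathbb{C}}$ on the compactly supported factor $a$, shows the result converges to $a/4$ in $L^2(\R)$ by Plancherel, and controls everything by uniform bounds of size $C(1+|y|)^{-(n+1)}$ so that dominated convergence applies against $f\in BMO_\LL(\R)$; absolute convergence of the double integral is established beforehand via tent-space duality, i.e.\ $\|S_\alpha(a)\|_{L^1(\R)}\|I_\alpha(f)\|_{L^\infty(\R)}$, with $S_\alpha$ bounded from $H^1_\LL(\R)$ into $L^1(\R)$ and the Carleson estimate $I_\alpha(f)\in L^\infty(\R)$). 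Your alternative — truncating the $BMO_\LL$ function in \emph{space} and passing to the limit $R\to\infty$ — is a genuinely different and attractive plan, but it is not carried through.

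The gap is in your tail estimate. The bound $\|\mathcal{G}_{\LL+\alpha,\mathbb{C}}(g-g_R)(x,\cdot)\|_H\le C\int_{|y|>R}e^{-c(|x-y|^2+|y||x-y|)}|x-y|^{-n}|g(y)|\,dy$, obtained from \eqref{2.0.1} by Minkowski's inequality, has a majorant with the non-locally-integrable singularity $|x-y|^{-n}$ (note $\int_{|x-y|<\delta}|x-y|^{-n}dy=\infty$), so its right-hand side is generically $+\infty$ for a.e.\ $x$ with $|x|>R$ lying where $g$ does not vanish — which is precisely the region you must handle, since for $|x|\lesssim R$ the integrand of your majorant is supported at distance $\gtrsim R$ and everything is fine. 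Multiplying by the Gaussian factor $e^{-c|x|^2}$ does not help ($\infty$ times small is still $\infty$), and that Gaussian decay of $\|\mathcal{G}_{\LL+\alpha,\mathbb{C}}(\phi)(x,\cdot)\|_H$ is itself only valid for $x$ off $\supp\phi$, for the same local-singularity reason (a minor, fixable point). No pointwise bound can close this: the square function of a $BMO$ function need not be pointwise finite, and what is actually true is an averaged, Carleson-type bound $\frac{1}{|B|}\int_0^{r(B)}\int_B|\mathcal{G}_{\LL+\alpha,\mathbb{C}}(g)(y,t)|^2\frac{dy\,dt}{t}\le C\|g\|_{BMO_\LL(\R)}^2$, together with a complementary decay estimate in $t$ for the large-$t$ part. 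With such local $L^2$ control of polynomial growth in $R$, Cauchy--Schwarz in $x$ over dyadic annuli against $e^{-c|x|^2}$ would close your argument; but this local $L^2$ estimate is exactly the statement $I_\alpha(f)\in L^\infty(\R)$ that the paper proves (via the decomposition $f=f_1+f_2+f_3$ and the bound $|\mathcal{G}_{\LL+\alpha,\mathbb{C}}(1)(x,t)|\le C(t/\rho(x))^{1/2}$), so repairing your proof necessarily re-imports the Carleson machinery your sketch was trying to avoid.
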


    \begin{proof}
        Firstly we consider $a \in L^\infty_c(\R)$ and $f \in BMO_\mathcal{L}(\R)$.
        In order to prove that
        \begin{equation}\label{2.B}
            \int_0^\infty \int_{\R}
                    \mathcal{G}_{\mathcal{L}+\alpha,\mathbb{C}}(a)(x,t) \mathcal{G}_{\mathcal{L}+\alpha,\mathbb{C}}(f)(x,t) \frac{dx dt}{t}
                = \frac{1}{4} \int_{\R} a(x) f(x) dx,
        \end{equation}
        we use the ideas developed in the proof of \cite[Lemma 4]{DGMTZ}.\\

        According to \cite[Lemma 5]{DGMTZ} we can write
        \begin{equation}\label{2.1.3}
            \int_0^\infty \int_{\R}
                    \left| \mathcal{G}_{\mathcal{L}+\alpha,\mathbb{C}}(a)(x,t)\right| \left|\mathcal{G}_{\mathcal{L}+\alpha,\mathbb{C}}(f)(x,t)\right| \frac{dx dt}{t}
                \leq C \|S_\alpha(a)\|_{L^1(\R)} \|I_\alpha(f)\|_{L^\infty(\R)} ,
        \end{equation}
        where
        $$S_\alpha(a)(x)
            = \left( \int_0^\infty \int_{|x-y|<t} \left| \mathcal{G}_{\mathcal{L}+\alpha,\mathbb{C}}(a)(y,t)\right|^2 \frac{dydt}{t^{n+1}} \right)^{1/2},
            \quad x \in \R, $$
        and
        $$I_\alpha(f)(x)
            = \sup_{B \ni x}\left( \frac{1}{|B|} \int_0^{r(B)} \int_B \left| \mathcal{G}_{\mathcal{L}+\alpha,\mathbb{C}}(f)(y,t)\right|^2 \frac{dydt}{t} \right)^{1/2},
            \quad x \in \R.$$
        Here $B$ represents a ball in $\R$ and $r(B)$ is its radius.\\

        We are going to show that the area integral operator $S_\alpha$ is bounded from
        $H^1_\mathcal{L}(\R)$ into $L^1(\R)$. According to \cite[Theorem 8.2]{HLMMY}
        $S_0$ is bounded from $H^1_\mathcal{L}(\R)$ into $L^1(\R)$. Then, it is
        sufficient to see that $S_\alpha - S_0$ is bounded from $L^1(\R)$ into itself.\\

        By using Minkowski's inequality we obtain
        \begin{align*}
             &\left( \int_0^\infty \int_{|x-y|<t}
                    \left| \mathcal{G}_{\mathcal{L}+\alpha,\mathbb{C}}(g)(y,t) -  \mathcal{G}_{\mathcal{L},\mathbb{C}}(g)(y,t)\right|^2 \frac{dydt}{t^{n+1}} \right)^{1/2} \\
             & \qquad \qquad \leq \int_{\R} |g(z)|  \left( \int_0^\infty \int_{|x-y|<t}
                    \left| t \partial_t \left[P_t^{\mathcal{L}+\alpha}(y,z) - P_t^{\mathcal{L}}(y,z)\right]\right|^2 \frac{dydt}{t^{n+1}} \right)^{1/2}dz,
                \quad g \in L^1(\R).
        \end{align*}
        Since,
        $$t \partial_t \left[P_t^{\mathcal{L}+\alpha}(y,z) - P_t^{\mathcal{L}}(y,z)\right]
            = \frac{t}{\sqrt{4 \pi}} \int_0^\infty  \frac{e^{-t^2/4s}}{s^{3/2}}
                \left( 1-\frac{t^2}{2s}\right) (e^{-\alpha s}-1) W_s^\mathcal{L}(y,z) ds,
            \quad y,z \in \R, \ t>0,$$
        by employing Minkowski's inequality and (\ref{2.1}) it follows that
        \begin{align*}
            & \left( \int_0^\infty \int_{|x-y|<t}
                    \left| t \partial_t \left[P_t^{\mathcal{L}+\alpha}(y,z) - P_t^{\mathcal{L}}(y,z)\right]\right|^2 \frac{dydt}{t^{n+1}} \right)^{1/2}\\
            & \qquad \qquad \leq C \int_0^\infty  \frac{|e^{-\alpha s}-1|}{s^{3/2}}
                    \left( \int_0^\infty \int_{|x-y|<t}
                    \left| t e^{-t^2/8s} W_s^\mathcal{L}(y,z) \right|^2 \frac{dydt}{t^{n+1}} \right)^{1/2} ds\\
    & \qquad \qquad \leq C \int_0^\infty  \frac{|e^{-(\alpha +n)s}-e^{-ns}|}{s^{3/2}(1-e^{-4s})^{n/2}}
                    \left( \int_0^\infty \int_{|x-y|<t}
                     e^{-c(t^2+|y-z|^2)/s} \frac{dydt}{t^{n-1}} \right)^{1/2} ds,
            \quad x,z \in \R.
        \end{align*}By taking into account again that $|e^{-(\alpha +n)s}-e^{-ns}|\leq Cse^{-cs}$, $s\in (0,\infty )$, and that $t^2+|z-y|^2\geq (t^2+|z-x|^2)/4$, when $|x-y|<t$, we can write
     \begin{align*}
            & \left( \int_0^\infty \int_{|x-y|<t}
                    \left| t \partial_t \left[P_t^{\mathcal{L}+\alpha}(y,z) - P_t^{\mathcal{L}}(y,z)\right]\right|^2 \frac{dydt}{t^{n+1}} \right)^{1/2}\\
            & \qquad \qquad \leq C \int_0^\infty  \frac{e^{-cs}e^{-c|x-z|^2/s}}{s(1-e^{-4s})^{n/2}}
                    \left( \int_0^\infty \int_{|x-y|<t}
                     e^{-ct^2/s} \frac{dydt}{t^{n-1}} \right)^{1/2} ds\\
    & \qquad \qquad \leq C \int_0^\infty  \frac{e^{-cs}e^{-c|x-z|^2/s}}{s^{(n+1)/2}}ds,\quad x,z\in \mathbb{R}^n.
         \end{align*}
       Then,
        $$\int_{\R}  \left( \int_0^\infty \int_{|x-y|<t}
                \left| t \partial_t \left[P_t^{\mathcal{L}+\alpha}(y,z) - P_t^{\mathcal{L}}(y,z)\right]\right|^2 \frac{dydt}{t^{n+1}} \right)^{1/2}dx
            \leq C \int_0^\infty  \frac{e^{-cs}}{s^{1/2}}ds\leq C, \quad z \in \R.$$
        Hence, the operator $S_\alpha-S_0$ is bounded from $L^1(\R)$ into itself.\\

        Our next objective is to see that $I_\alpha(f) \in L^\infty(\R)$. Let $x_0 \in \R$ and
        $r_0>0$. We denote by $B$ the ball $B(x_0,r_0)$ and we decompose $f$ as follows
        $$f
            = (f-f_{B^*}) \chi_{B^*} + (f-f_{B^*}) \chi_{\R \setminus B^*} + f_{B^*}
            = f_1 + f_2 + f_3, $$
        where $B^*=B(x_0,2r_0)$.\\

        According to \cite[(4)]{BCCFR}, since $\gamma(H,\mathbb{C})=H$, we can write
        \begin{align}\label{2.4}
            \frac{1}{|B|} \int_0^{r_0} \int_B  \left| \mathcal{G}_{\mathcal{L}+\alpha,\mathbb{C}}(f_1)(y,t)\right|^2 \frac{dydt}{t}
                \leq & \frac{1}{|B|} \int_{\R} \int_0^\infty  \left| \mathcal{G}_{\mathcal{L}+\alpha,\mathbb{C}}(f_1)(y,t)\right|^2 \frac{dtdy}{t} \nonumber \\
                \leq & \frac{C}{|B|} \int_{B^*} \left| f(x) - f_{B^*}\right|^2 dx
                \leq C \|f\|^2_{BMO_\mathcal{L}(\R)}.
        \end{align}
        By using \eqref{2.0}  we can proceed as in \cite[p. 338]{DGMTZ} to obtain
        \begin{align}\label{2.5}
            \frac{1}{|B|} \int_0^{r_0} \int_B  \left| \mathcal{G}_{\mathcal{L}+\alpha,\mathbb{C}}(f_2)(y,t)\right|^2 \frac{dydt}{t}
                \leq C \|f\|^2_{BMO_\mathcal{L}(\R)}.
        \end{align}
        If $r_0 \geq \rho(x_0)$, since $\mathcal{G}_{\mathcal{L}+\alpha,\mathbb{C}}(1) \in L^\infty(\R,H)$
        (see Subsection~\ref{subsec:2.1}), then
        \begin{align}\label{f3}
            \frac{1}{|B|} \int_0^{r_0} \int_B  \left| \mathcal{G}_{\mathcal{L}+\alpha,\mathbb{C}}(f_3)(y,t)\right|^2 \frac{dydt}{t}
                \leq &  \frac{|f_{B^*}|^2}{|B|} \int_B  \left\| \mathcal{G}_{\mathcal{L}+\alpha,\mathbb{C}}(1)(y,\cdot)\right\|_{H}^2 dy
                \leq  C |f_{B^*}|^2
                \leq C \|f\|^2_{BMO_\mathcal{L}(\R)}.
        \end{align}

        Suppose now that $r_0 < \rho(x_0)$. According to \eqref{2.1.2}, we have that
        $$\mathcal{G}_{\mathcal{L}+\alpha,\mathbb{C}}(1)(x,t)
            = \frac{t^2}{\sqrt{4\pi}} \int_0^\infty \frac{e^{-u}}{u^{3/2}}
                \partial_z W_z^{\LL + \alpha}(1)(x)_{|{ z=t^2/4u}} du,
            \quad x \in \R \text{ and } t>0.$$
     By \eqref{2.1.1} it follows that, for every $x \in \R \text{ and } z>0$,
        $$|\partial_z W_z^{\mathcal{L}+\alpha }(1)(x)|
            \leq C \frac{e^{-(\alpha +n)z}e^{-c(1-e^{-4z})|x|^2}}{(\rho (x))^2}\leq  C \frac{e^{-cz}\max\{e^{-cz/(\rho (x)^2)}, e^{-c/(\rho (x))^2}\}}{(\rho (x))^2}\leq C\frac{1}{(\rho(x))^{1/2}z^{3/4}}.
 $$
        Then, we conclude that
        \begin{align*}
            |\mathcal{G}_{\mathcal{L}+\alpha,\mathbb{C}}(1)(x,t)|
                \leq & C \left(\frac{t}{\rho(x)} \right)^{1/2},
                \quad x \in \R \text{ and } t>0.
        \end{align*}
        The arguments developed in \cite[p. 339]{DGMTZ} allow us to obtain
        \begin{equation}\label{2.6}
            \frac{1}{|B|} \int_0^{r_0} \int_B  \left| \mathcal{G}_{\mathcal{L}+\alpha,\mathbb{C}}(f_3)(y,t)\right|^2 \frac{dydt}{t}
                \leq C \|f\|^2_{BMO_\mathcal{L}(\R)}.
        \end{equation}
        Putting together \eqref{2.4}, \eqref{2.5},\eqref{f3} and \eqref{2.6} we get
        $$\frac{1}{|B|} \int_0^{r_0} \int_B  \left| \mathcal{G}_{\mathcal{L}+\alpha,\mathbb{C}}(f)(y,t)\right|^2 \frac{dydt}{t}
                \leq C \|f\|^2_{BMO_\mathcal{L}(\R)},$$
        where $C$ does not depend on $B$, and we prove that $I_\alpha(f) \in L^\infty(\R)$.\\

        Since $a \in H^1_\LL(\R)$, from \eqref{2.1.3} we deduce that
        \begin{equation}\label{2.6.1}
            \int_0^\infty \int_{\R}
                    \left| \mathcal{G}_{\mathcal{L}+\alpha,\mathbb{C}}(a)(x,t)\right| \left|\mathcal{G}_{\mathcal{L}+\alpha,\mathbb{C}}(f)(x,t)\right| \frac{dx dt}{t}
                < \infty.
        \end{equation}
        Then,
        $$\int_0^\infty \int_{\R}
                    \mathcal{G}_{\mathcal{L}+\alpha,\mathbb{C}}(a)(x,t) \mathcal{G}_{\mathcal{L}+\alpha,\mathbb{C}}(f)(x,t) \frac{dx dt}{t}
                = \lim_{N \to \infty} \int_{1/N}^N \int_{\R}
                    \mathcal{G}_{\mathcal{L}+\alpha,\mathbb{C}}(a)(x,t) \mathcal{G}_{\mathcal{L}+\alpha,\mathbb{C}}(f)(x,t) \frac{dx dt}{t}.$$
        Let $N \in \mathbb{N}$. By interchanging the order of integration we obtain
        $$\int_{1/N}^N \int_{\R}
                    \mathcal{G}_{\mathcal{L}+\alpha,\mathbb{C}}(a)(x,t) \mathcal{G}_{\mathcal{L}+\alpha,\mathbb{C}}(f)(x,t) \frac{dx dt}{t}
            = \int_{\R} f(y) \int_{1/N}^N
                    \mathcal{G}_{\mathcal{L}+\alpha,\mathbb{C}} \left(\mathcal{G}_{\mathcal{L}+\alpha,\mathbb{C}}(a)(\cdot,t_1) \right)(y,t)_{|{t_1=t}} \frac{dt dy}{t}. $$
        We are going to justify this interchange in the order of integration. In order to do it we will see that
        \begin{equation}\label{2.7}
            \int_{1/N}^N \int_{\R} |f(y)| \int_{\R}
                    \left|t \partial_t P_t^{\LL + \alpha}(x,y)\mathcal{G}_{\mathcal{L}+\alpha,\mathbb{C}}(a)(x,t) \right| \frac{dx dy dt}{t}
                < \infty.
        \end{equation}
        By using \eqref{2.0} it follows that
        \begin{align*}
            & \int_{\R} \left|t \partial_t P_t^{\LL + \alpha}(x,y) \right|
                    \int_{\R} \left|t \partial_t P_t^{\LL + \alpha}(x,z) \right| |a(z)| dz dx \\
            & \qquad \qquad \leq C \int_{\R} |a(z)| \int_{\R}
                    \frac{t}{\left( |x-z|^2+t^2\right)^{(n+1)/2}} \frac{t}{\left( |x-y|^2+t^2\right)^{(n+1)/2}}  dx dz\\
            & \qquad \qquad \leq C \int_{\R} |a(z)| \frac{t}{\left(t+ |z-y| \right)^{n+1}} dz,
            \quad x,y \in \R \text{ and } t>0.
        \end{align*}
        Suppose that $\supp(a) \subset B=B(0,R)$. We have that
        \begin{align}\label{2.8}
            & \int_{\R} \left|t \partial_t P_t^{\LL + \alpha}(x,y) \right|
                    \int_{\R} \left|t \partial_t P_t^{\LL + \alpha}(x,z) \right| |a(z)| dz dx
            \leq C \|a\|_{L^\infty(\R)}
            \leq \frac{C}{(1+|y|)^{n+1}},
            \ y \in B^* \text{ and } t>0.
        \end{align}
        On the other hand, if $y \notin B^*=B(0,2R)$ and $z \in B$, then $|z-y| \geq |y|/2$. Hence, we get
        \begin{align}\label{2.9}
            & \int_{\R} \left|t \partial_t P_t^{\LL + \alpha}(x,y) \right|
                    \int_{\R} \left|t \partial_t P_t^{\LL + \alpha}(x,z) \right| |a(z)| dz dx
             \leq C R^n \|a\|_{L^\infty(\R)} \frac{t}{(t+|y|)^{n+1}},
            \ y \notin B^* \text{ and } t>0.
        \end{align}
        Since $f \in BMO_\LL(\R)$, \eqref{2.8} and \eqref{2.9} imply \eqref{2.7}.\\

        By taking into account that $a \in L^2(\R)$ we can write, for every $x \in \R$ and $t>0$,
        \begin{align*}
            \mathcal{G}_{\mathcal{L}+\alpha,\mathbb{C}} \left(\mathcal{G}_{\mathcal{L}+\alpha,\mathbb{C}}(a)(\cdot,t_1) \right)(x,t)_{|{t_1=t}}
                = & - \mathcal{G}_{\mathcal{L}+\alpha,\mathbb{C}} \left(
                    \sum_{k \in \mathbb{N}^n} t_1 \sqrt{2|k|+n+\alpha} e^{- t_1 \sqrt{2|k|+n+\alpha}}
                    \langle a , h_k \rangle h_k \right) (x,t)_{|{t_1=t}} \\
                = & t^2 \sum_{k \in \mathbb{N}^n} (2|k|+n+\alpha) e^{- 2 t \sqrt{2|k|+n+\alpha}}
                    \langle a , h_k \rangle h_k(x).
        \end{align*}
        Note that the last series converges uniformly in $(x,t) \in  \R\times  [a,b] $, for every $0<a<b<\infty$.
        We have that
        \begin{align*}
            & \int_{1/N}^N \mathcal{G}_{\mathcal{L}+\alpha,\mathbb{C}} \left(\mathcal{G}_{\mathcal{L}+\alpha,\mathbb{C}}(a)(\cdot,t_1) \right)(y,t)_{|{t_1=t}} \frac{dt}{t}
                =  \sum_{k \in \mathbb{N}^n} \langle a , h_k \rangle h_k(y)
                    (2|k|+n+\alpha) \int_{1/N}^N t e^{- 2 t \sqrt{2|k|+n+\alpha}} dt \\
            & \qquad \qquad = \sum_{k \in \mathbb{N}^n} \langle a , h_k \rangle h_k(y)
                    \Big[ -\frac{1}{2} \sqrt{2|k|+n+\alpha}
                            \left( N e^{- 2 N\sqrt{2|k|+n+\alpha}} - \frac{1}{N} e^{- \frac{2}{N} \sqrt{2|k|+n+\alpha}} \right) \\
            & \qquad \qquad \qquad  -\frac{1}{4}\left( e^{- 2 N\sqrt{2|k|+n+\alpha}} - e^{- \frac{2}{N} \sqrt{2|k|+n+\alpha}} \right)\Big]\\
            & \qquad \qquad = -\frac{1}{4} \left[ P_{2N}^{\LL + \alpha}(a)(y) -  P_{2/N}^{\LL + \alpha}(a)(y) \right]
                              -\frac{1}{4} \left[ \mathcal{G}_{\LL + \alpha,\mathbb{C}}(a)(y,2N) - \mathcal{G}_{\LL + \alpha,\mathbb{C}}(a)(y,2/N)\right],
            \quad y \in \R.
        \end{align*}
        According to \eqref{2.0} it follows that
        \begin{align*}
            \sup_{t>0} \left| \mathcal{G}_{\LL + \alpha,\mathbb{C}}(a)(y,t) \right|
                \leq & C  \sup_{t>0} \int_{\R}  \frac{t|a(z)|}{\left(t+ |z-y| \right)^{n+1}} dz
                \leq  C  \|a\|_{L^\infty(\R)}
                \leq \frac{C}{(1+|y|)^{n+1}},
                \ y \in B^*,
        \end{align*}
        and by proceeding as in \eqref{2.0} and using \eqref{2.1}, we get
        \begin{align*}
            \sup_{t>0} \left| \mathcal{G}_{\LL + \alpha,\mathbb{C}}(a)(y,t) \right|
                \leq & C  \sup_{t>0} \int_{B} |a(z)| \frac{t e^{-c|y||z-y|}}{\left(t+ |z-y| \right)^{n+1}} dz \\
                \leq & C  \|a\|_{L^\infty(\R)} e^{-c|y|^2} \int_{\R} \frac{t}{\left(t+ |z-y| \right)^{n+1}} dz
                \leq \frac{C}{(1+|y|)^{n+1}},
                \quad y \notin B^*,
        \end{align*}
        In a similar way we can prove that
        $$\sup_{t>0} \left| P_{t}^{\LL + \alpha}(a)(y)\right|
            \leq \frac{C}{(1+|y|)^{n+1}},  \quad y \in \R.$$

        We conclude that
        $$\sup_{N \in \mathbb{N}} \left| \int_{1/N}^N \mathcal{G}_{\mathcal{L}+\alpha,\mathbb{C}} \left(\mathcal{G}_{\mathcal{L}+\alpha,\mathbb{C}}(a)(\cdot,t_1) \right)(y,t)_{|{t_1=t}} \frac{dt}{t} \right|
            \leq \frac{C}{(1+|y|)^{n+1}},  \quad y \in \R.$$
        Hence, for every increasing sequence $(N_m)_{m \in \mathbb{N}} \subset \mathbb{N}$, we have that
        $$\int_0^\infty \int_{\R}
                    \mathcal{G}_{\mathcal{L}+\alpha,\mathbb{C}}(a)(x,t) \mathcal{G}_{\mathcal{L}+\alpha,\mathbb{C}}(f)(x,t) \frac{dx dt}{t}
            = \int_{\R} f(y) \lim_{m \to \infty} \int_{1/N_m}^{N_m}
                    \mathcal{G}_{\mathcal{L}+\alpha,\mathbb{C}} \left(\mathcal{G}_{\mathcal{L}+\alpha,\mathbb{C}}(a)(\cdot,t_1) \right)(y,t)_{|{t_1=t}} \frac{dt dy}{t}. $$
        because $f \in BMO_\LL(\R)$.\\

        Then, \eqref{2.B} will be proved when we show that
        \begin{equation}\label{2.10}
            \lim_{N \to \infty} \int_{1/N}^{N}
                    \mathcal{G}_{\mathcal{L}+\alpha,\mathbb{C}} \left(\mathcal{G}_{\mathcal{L}+\alpha,\mathbb{C}}(a)(\cdot,t_1) \right)(y,t)_{|{t_1=t}} \frac{dt}{t}
                = \frac{a(y)}{4} ,
                \quad \text{in } L^2(\R).
        \end{equation}
        In order to see that \eqref{2.10} holds we use Plancherel equality to get
        \begin{align*}
            & \left\| \int_{1/N}^{N} \mathcal{G}_{\mathcal{L}+\alpha,\mathbb{C}} \left(\mathcal{G}_{\mathcal{L}+\alpha,\mathbb{C}}(a)(\cdot,t_1) \right)(y,t)_{|{t_1=t}} \frac{dt}{t}
                    - \frac{a(y)}{4}  \right\|_{L^2(\R)}^2 \\
            & \qquad \qquad = \sum_{k \in \mathbb{N}^n} |\langle a , h_k \rangle |^2
                    \Big| \frac{\sqrt{2|k|+n+\alpha}}{2}
                            \left(- N e^{- 2 N\sqrt{2|k|+n+\alpha}} + \frac{1}{N} e^{- \frac{2}{N} \sqrt{2|k|+n+\alpha}} \right) \\
            & \qquad \qquad \qquad  -\frac{1}{4}\left( e^{- 2 N\sqrt{2|k|+n+\alpha}} - e^{- \frac{2}{N} \sqrt{2|k|+n+\alpha}} \right) - \frac{1}{4}\Big|^2.
        \end{align*}
        The dominated convergence Theorem leads to
        $$\lim_{N \to \infty} \left\| \int_{1/N}^{N} \mathcal{G}_{\mathcal{L}+\alpha,\mathbb{C}} \left(\mathcal{G}_{\mathcal{L}+\alpha,\mathbb{C}}(a)(\cdot,t_1) \right)(y,t)_{|{t_1=t}} \frac{dt}{t}
                    - \frac{a(y)}{4}  \right\|_{L^2(\R)}^2
            = 0.$$
        Thus, the proof of \eqref{2.B} is finished.\\

        Suppose now that $f \in BMO_\LL(\R,\B)$ and $a=\sum_{j=1}^m a_j b_j$, where $a_j \in L_c^\infty(\R)$
        and $b_j \in \B^*$, $j=1, \dots, m \in \mathbb{N}$. We have that
        \begin{align*}
            & \int_0^\infty \int_{\R}
                    \langle \mathcal{G}_{\mathcal{L}+\alpha,\B^*}(a)(x,t) , \mathcal{G}_{\mathcal{L}+\alpha,\B}(f)(x,t) \rangle_{\B^*,\B}\frac{dx dt}{t} \\
            & \qquad \qquad = \sum_{j=1}^m \int_0^\infty \int_{\R} \mathcal{G}_{\mathcal{L}+\alpha,\mathbb{C}}(a_j)(x,t)
                     \langle b_j , \mathcal{G}_{\mathcal{L}+\alpha,\B}\left(f\right)(x,t) \rangle_{\B^*,\B}\frac{dx dt}{t} \\
            & \qquad \qquad = \sum_{j=1}^m \int_0^\infty \int_{\R} \mathcal{G}_{\mathcal{L}+\alpha,\mathbb{C}}(a_j)(x,t)
                     \mathcal{G}_{\mathcal{L}+\alpha,\mathbb{C}}\left(\langle b_j ,f\rangle_{\B^*,\B}\right)(x,t) \frac{dx dt}{t}.
        \end{align*}
        Since, $\langle b_j ,f\rangle_{\B^*,\B} \in BMO_\LL(\R)$, $j=1, \dots, m$, the proof can be completed by
        using \eqref{2.B}.
    \end{proof}

    We now prove \eqref{2.B.0}.  Let $f \in BMO_\LL(\R, \B)$. We denote by $\mathcal{A}$ the following linear space
    $$\mathcal{A}
        = \spann \{ a : a \text{ is a atom in } H_\LL^1(\R) \}.$$
    We have that
    $$\|f\|_{BMO_\LL(\R, \B)}
        =  \sup_{\substack{ a \in \mathcal{A} \otimes \B^*  \\  \|a\|_{H^1_\LL(\R,\B^*)} \leq 1}}
        \left| \int_\R \langle f(x), a(x) \rangle_{\B,\B^*} dx \right|.$$
    Note that, according to \cite[Lemma 2.4]{Hy2} $\mathcal{A} \otimes \B^*$ is a dense subspace of $H^1_\LL(\R,\B^*)$.
    Moreover, since $\B$ is UMD, $\B$ is reflexive and $\B^*$ is also a UMD space. Hence $\left(H^1_\LL(\R,\B^*)\right)^*=BMO_\LL(\R, \B)$.\\

    By Proposition~\ref{Prop2.1} we deduce that
    $$\int_{\R} \langle a(x) , f(x) \rangle_{\B^*,\B}dx
        = 4 \int_0^\infty \int_{\R}
                \langle \mathcal{G}_{\mathcal{L}+\alpha,\B^*}(a)(x,t) , \mathcal{G}_{\mathcal{L}+\alpha,\B}(f)(x,t) \rangle_{\B^*,\B} \frac{dx dt}{t},
        \quad a \in \mathcal{A} \otimes \B^*.$$

    \begin{Prop}\label{Prop2.2}
        Let $Y$ be a Banach space. Suppose that $g \in BMO_\LL(\R, Y)$ and $h \in H^1_\LL(\R,Y^*)$ such that
        $$\int_\R \left| \langle h(x),g(x) \rangle_{Y^*,Y}\right| dx < \infty.$$
        Then,
        $$\left| \int_\R  \langle h(x),g(x) \rangle_{Y^*,Y} dx \right|
            \leq C \|h\|_{H^1_\LL(\R,Y^*)} \|g\|_{BMO_\LL(\R, Y)} .$$
    \end{Prop}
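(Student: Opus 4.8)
The plan is to reduce the pairing estimate to a uniform bound on atoms and then to sum. Since $h \in H^1_\LL(\R,Y^*)$, I would apply Proposition~\ref{Prop2.3} to the Banach space $Y^*$ to obtain a decomposition $h=\sum_j \lambda_j a_j$, where each $a_j$ is an atom for $H^1_\LL(\R,Y^*)$, $\lambda_j\in\mathbb{C}$, the series converges in $L^1(\R,Y^*)$, and $\sum_j|\lambda_j|\le C\|h\|_{H^1_\LL(\R,Y^*)}$. Thus it suffices to establish a constant $C>0$, independent of the atom, with
\begin{equation}\label{peratom}
    \left| \int_\R \langle a(x), g(x) \rangle_{Y^*,Y}\, dx \right| \le C \|g\|_{BMO_\LL(\R,Y)}
\end{equation}
for every atom $a$, and afterwards to justify passing the pairing through the series.

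Fix an atom $a$ supported in $B=B(x_0,r_0)$ with $0<r_0\le\rho(x_0)$ and $\|a\|_{L^\infty(\R,Y^*)}\le|B|^{-1}$, and write $g_B=\frac{1}{|B|}\int_B g(x)\,dx$. I distinguish two cases. If $r_0\le\rho(x_0)/2$, then $\int_\R a\,dx=0$, so $\int_B\langle a(x),g_B\rangle_{Y^*,Y}\,dx=0$, and since $r_0<\rho(x_0)$ property $(i)$ of $BMO_\LL$ gives
$$\left| \int_\R \langle a, g \rangle_{Y^*,Y}\, dx \right| = \left| \int_B \langle a(x), g(x)-g_B \rangle_{Y^*,Y}\, dx \right| \le \frac{1}{|B|}\int_B \|g(x)-g_B\|_Y\, dx \le \|g\|_{BMO_\LL(\R,Y)}.$$
If $\rho(x_0)/2<r_0\le\rho(x_0)$ there is no cancellation, but now $B\subset B(x_0,\rho(x_0))$ with comparable measures, and property $(ii)$ applied to the critical ball yields $\frac{1}{|B(x_0,\rho(x_0))|}\int_{B(x_0,\rho(x_0))}\|g\|_Y\le\|g\|_{BMO_\LL(\R,Y)}$, so
$$\left| \int_\R \langle a, g \rangle_{Y^*,Y}\, dx \right| \le \frac{1}{|B|}\int_B \|g(x)\|_Y\, dx \le \left( \frac{\rho(x_0)}{r_0}\right)^{n} \frac{1}{|B(x_0,\rho(x_0))|}\int_{B(x_0,\rho(x_0))} \|g\|_Y\, dx \le 2^{n}\|g\|_{BMO_\LL(\R,Y)}.$$
In both cases \eqref{peratom} holds with a constant independent of $a$.

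Summing, $\sum_j|\lambda_j|\,\big|\int_\R\langle a_j,g\rangle_{Y^*,Y}\,dx\big|\le C\|g\|_{BMO_\LL(\R,Y)}\sum_j|\lambda_j|\le C\|h\|_{H^1_\LL(\R,Y^*)}\|g\|_{BMO_\LL(\R,Y)}$, so the series $\sum_j\lambda_j\int_\R\langle a_j,g\rangle_{Y^*,Y}\,dx$ converges absolutely with the desired bound, and it remains only to identify its sum with $\int_\R\langle h,g\rangle_{Y^*,Y}\,dx$. This identification is exactly where the hypothesis $\int_\R|\langle h,g\rangle_{Y^*,Y}|\,dx<\infty$ enters: because $\sum_j|\lambda_j|\,\|a_j(x)\|_{Y^*}$ belongs to $L^1(\R)$ (each atom satisfies $\|a_j\|_{L^1(\R,Y^*)}\le 1$), the series $\sum_j\lambda_j a_j(x)$ converges absolutely to $h(x)$ for a.e.\ $x\in\R$, whence $\langle h(x),g(x)\rangle_{Y^*,Y}=\sum_j\lambda_j\langle a_j(x),g(x)\rangle_{Y^*,Y}$ a.e., and the assumed global integrability of $\langle h,g\rangle_{Y^*,Y}$ is used to pass the integral through this a.e.\ convergent series, giving $\int_\R\langle h,g\rangle_{Y^*,Y}\,dx=\sum_j\lambda_j\int_\R\langle a_j,g\rangle_{Y^*,Y}\,dx$. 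I expect this last interchange to be the main obstacle: the \emph{absolute} local integrals $\int_B|\langle a_j,g\rangle_{Y^*,Y}|$ are not uniformly controlled by $\|g\|_{BMO_\LL}$ for atoms on small balls (taking absolute values destroys the cancellation, and $\|g_B\|_Y$ may grow logarithmically as $r_0\to 0$), so a termwise absolute bound is unavailable and the passage to the limit must genuinely exploit the finiteness hypothesis rather than a dominating function built from the individual atoms.
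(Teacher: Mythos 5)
Your atomic reduction is sound as far as it goes: the per-atom bound $\bigl|\int_\R \langle a,g\rangle_{Y^*,Y}\,dx\bigr|\le C\|g\|_{BMO_\LL(\R,Y)}$ (cancellation for $r_0\le\rho(x_0)/2$, the critical ball and property $(ii)$ otherwise) is correct, as is the absolute convergence of $\sum_j\lambda_j\int_\R\langle a_j,g\rangle_{Y^*,Y}\,dx$ with the right majorant. The genuine gap is the final identification $\int_\R\langle h,g\rangle_{Y^*,Y}\,dx=\sum_j\lambda_j\int_\R\langle a_j,g\rangle_{Y^*,Y}\,dx$, and you essentially concede it in your last sentence without supplying an argument. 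Almost everywhere convergence of the partial sums $S_N(x)=\sum_{j\le N}\lambda_j\langle a_j(x),g(x)\rangle_{Y^*,Y}$ to an \emph{integrable} limit does not imply convergence of the integrals (moving-bump examples); the hypothesis $\int_\R|\langle h,g\rangle_{Y^*,Y}|\,dx<\infty$ dominates the limit, not the partial sums, and provides neither a dominating function for $(S_N)_N$ nor their uniform integrability. As you yourself observe, $\sum_j|\lambda_j|\int_\R\|a_j\|_{Y^*}\|g\|_Y\,dx$ may diverge because $\frac{1}{|B_j|}\int_{B_j}\|g\|_Y\,dx$ can grow like $\log\bigl(\rho(x_j)/r_j\bigr)$ relative to $\|g\|_{BMO_\LL(\R,Y)}$. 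So the assertion that the finiteness hypothesis ``is used to pass the integral through the series'' is exactly the unproved step; this subtlety is classical already in scalar $H^1$--$BMO$ duality.

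The paper closes precisely this hole by truncating $g$ instead of manipulating the series for $h$: it sets $\Phi_\ell(b)=b$ for $\|b\|_Y<\ell$ and $\Phi_\ell(b)=\ell b/\|b\|_Y$ otherwise, checks that $\Phi_\ell$ is Lipschitz with constant $2$, so that $g_\ell=\Phi_\ell\circ g\in BMO_\LL(\R,Y)$ with $\|g_\ell\|_{BMO_\LL(\R,Y)}\le C\|g\|_{BMO_\LL(\R,Y)}$ uniformly in $\ell$. For the \emph{bounded} function $g_\ell$ the pairing with $h$ does coincide with the dual action on all of $H^1_\LL(\R,Y^*)$ --- there your interchange is legitimate, since $\sum_j|\lambda_j|\,\|a_j\|_{L^1(\R,Y^*)}\|g_\ell\|_{L^\infty(\R,Y)}<\infty$ gives an honest dominating function --- whence $\bigl|\int_\R\langle h,g_\ell\rangle_{Y^*,Y}\,dx\bigr|\le C\|h\|_{H^1_\LL(\R,Y^*)}\|g\|_{BMO_\LL(\R,Y)}$. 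The decisive observation is that $g_\ell(x)$ is a scalar multiple $\lambda(x)g(x)$ with $0\le\lambda(x)\le1$, so $|\langle h(x),g_\ell(x)\rangle_{Y^*,Y}|\le|\langle h(x),g(x)\rangle_{Y^*,Y}|$ pointwise: this is how the integrability hypothesis enters in a usable form, as the dominating function for letting $\ell\to\infty$ by dominated convergence. Your argument becomes complete if you insert this truncation layer (run your atomic computation for $g_\ell$, then pass to the limit); as written, the last interchange is a gap, not a technicality.
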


    \begin{proof}
        Note firstly that $g$ defines an element $T_g$ of $\left(H^1_\LL(\R,Y^*)\right)^*$ such that
        $$T_g(a)
            = \int_\R  \langle a(x), g(x) \rangle_{Y^*,Y} dx,$$
        and
        $$\left| \int_\R  \langle a(x), g(x) \rangle_{Y^*,Y} dx \right|
            \leq C \|a\|_{H^1_\LL(\R,Y^*)} \|g\|_{BMO_\LL(\R, Y)} ,$$
        provided that $a$ is a linear combination of atoms in $H^1_\LL(\R,Y^*)$. Moreover, it is well-known that the function
        $F(x)=\langle a(x), g(x) \rangle_{Y^*,Y}$, $x \in \R$, might not be integrable on $\R$
        when $a \in H^1_\LL(\R,Y^*)$. On the other hand, if $\tilde{g} \in L^\infty(\R,Y)$,
        then $\tilde{g} \in BMO_\LL(\R,Y)$,
        $$T_{\tilde{g}}(a)
            = \int_\R  \langle a(x) ,  \tilde{g}(x) \rangle_{Y^*,Y} dx,
            \quad a \in H^1_\LL(\R,Y^*),$$
        and
        $$\left| \int_\R  \langle a(x) ,  \tilde{g}(x) \rangle_{Y^*,Y} dx \right|
            \leq C \|a\|_{H^1_\LL(\R,Y^*)} \|\tilde{g}\|_{BMO_\LL(\R, Y)} ,
             \quad a \in H^1_\LL(\R,Y^*).$$

        Let $\ell \in \mathbb{N}$. We define the function $\Phi: Y \longrightarrow Y$ by
        $$\Phi_\ell(b)
            = \left\{
            \begin{array}{ll}
                \dfrac{\ell b}{\|b\|_Y}, & \|b\|_Y \geq \ell, \\
                &\\
                b, & \|b\|_Y < \ell.
            \end{array}\right.$$
        $\Phi$ is a Lipschitz function. Indeed, let $b_1$, $b_2 \in Y$. If $\|b_1\|_Y \geq \ell$ and $\|b_2\|_Y \geq \ell$, then
        \begin{align*}
            \left\| \Phi_\ell(b_1) - \Phi_\ell(b_2)  \right\|_Y
                = & \left\| \dfrac{\ell b_1}{\|b_1\|_Y} - \dfrac{\ell b_2}{\|b_2\|_Y}  \right\|_Y
                \leq  \left\| b_1 - b_2 \dfrac{\|b_1\|_Y}{\|b_2\|_Y}  \right\|_Y \\
                \leq & \left\| b_1 - b_2 \right\|_Y
                     + \|b_2\|_Y \left| 1 - \dfrac{\|b_1\|_Y}{\|b_2\|_Y}  \right|
                \leq 2 \|b_1-b_2\|_Y.
        \end{align*}
        Moreover, if $\|b_1\|_Y < \ell$ and $\|b_2\|_Y \geq \ell$, it follows that
        \begin{align*}
            \left\| \Phi_\ell(b_1) - \Phi_\ell(b_2)  \right\|_Y
                = & \left\| b_1 -  \dfrac{\ell b_2}{\|b_2\|_Y}  \right\|_Y
                \leq  \left\| b_1 - b_2 \right\|_Y + \left\| b_2 -  \dfrac{\ell b_2}{\|b_2\|_Y}  \right\|_Y \\
                \leq & \left\| b_1 - b_2 \right\|_Y + \left| \|b_2\|_Y - \ell \right|
                \leq \left\| b_1 - b_2 \right\|_Y +  \|b_2\|_Y - \|b_1\|_Y
                \leq 2 \|b_1-b_2\|_Y.
        \end{align*}
        We define the function $g_\ell(x)=\Phi_\ell(g(x))$, $x \in \R$. We have that $g_\ell \in BMO_\LL(\R,Y)$ and
        $\|g_\ell\|_{BMO_\LL(\R,Y)} \leq C \|g\|_{BMO_\LL(\R,Y)} $. Moreover,
        $$\left| \langle h(x), g_\ell(x) \rangle_{Y^*,Y}\right|
            \leq \left| \langle h(x),g(x) \rangle_{Y^*,Y}\right|,
            \quad \text{a.e. } x \in \R.$$

        By using convergence dominated Theorem, since
        $\displaystyle \lim_{\ell \to \infty} \langle h(x), g_\ell(x) \rangle_{Y^*,Y} = \langle h(x),g(x) \rangle_{Y^*,Y}$
        a.e. $x \in \R$, we deduce that
        \begin{align*}
            \left| \int_\R  \langle h(x),g(x) \rangle_{Y^*,Y} dx \right|
                = & \lim_{\ell \to \infty} \left| \int_\R  \langle h(x) , g_\ell(x) \rangle_{Y^*,Y} dx \right|
                \leq  C \underset{\ell \to \infty}{\overline{\lim}}  \|h\|_{H^1_\LL(\R,Y^*)} \|g_\ell\|_{BMO_\LL(\R, Y)}\\
                \leq & C \|h\|_{H^1_\LL(\R,Y^*)} \|g\|_{BMO_\LL(\R, Y)} .
        \end{align*}
    \end{proof}

    Suppose that $a=\sum_{j=1}^m a_j b_j$, where $a_j$ is an atom for $H^1_\LL(\R)$ and $b_j \in \B^*$, $j=1, \dots, m \in \mathbb{N}$. Then,
    according to Theorem~\ref{Th1.1} for $H^1_\LL(\R,\B^*)$, we have that
    $$\mathcal{G}_{\mathcal{L}+\alpha,\B^*}(a)
        = \sum_{j=1}^m b_j \mathcal{G}_{\mathcal{L}+\alpha,\mathbb{C}}(a_j) \in H^1_\LL(\R,\gamma(H,\B^*)). $$

    If $(e_\ell)_{\ell=1}^\infty$ is an orthonormal basis in $H$ by taking into account that $\gamma(H,\B)^*=\gamma(H,\B^*)$ via trace duality
    we can write
    \begin{align*}
        & \left\langle  \mathcal{G}_{\mathcal{L}+\alpha,\B^*}(a)(x,\cdot), \mathcal{G}_{\mathcal{L}+\alpha,\B}(f)(x,\cdot)  \right\rangle_{\gamma(H,\B^*),\gamma(H,\B)} \\
        & \qquad \qquad = \sum_{j=1}^m  \left\langle b_j \mathcal{G}_{\mathcal{L}+\alpha,\mathbb{C}}(a_j)(x,\cdot), \mathcal{G}_{\mathcal{L}+\alpha,\B}(f)(x,\cdot)  \right\rangle_{\gamma(H,\B^*),\gamma(H,\B)} \\
        & \qquad \qquad = \sum_{j=1}^m \sum_{\ell =1}^\infty \int_0^\infty e_\ell(t) \int_0^\infty
                \left\langle b_j\mathcal{G}_{\mathcal{L}+\alpha,\mathbb{C}}(a_j)(x,u) , \mathcal{G}_{\mathcal{L}+\alpha,\B}(f)(x,t) \right\rangle_{\B^*,\B} e_\ell(u) \frac{du}{u } \frac{ dt}{t}
    \end{align*}
    \begin{align*}
        & \qquad \qquad = \sum_{j=1}^m \sum_{\ell =1}^\infty  \int_0^\infty e_\ell(t) \int_0^\infty
                 \mathcal{G}_{\mathcal{L}+\alpha,\mathbb{C}}(a_j)(x,u) \mathcal{G}_{\mathcal{L}+\alpha,\mathbb{C}}\left(\left\langle b_j, f \right\rangle_{\B^*,\B}\right)(x,t)  e_\ell(u) \frac{du}{u } \frac{ dt}{t} \\
        & \qquad \qquad = \sum_{j=1}^m \int_0^\infty
                 \mathcal{G}_{\mathcal{L}+\alpha,\mathbb{C}}(a_j)(x,t) \mathcal{G}_{\mathcal{L}+\alpha,\mathbb{C}}\left(\left\langle b_j, f \right\rangle_{\B^*,\B}\right)(x,t) \frac{ dt}{t} \\
        & \qquad \qquad = \sum_{j=1}^m \int_0^\infty \left\langle
                 \mathcal{G}_{\mathcal{L}+\alpha,\mathbb{B}^*}(a_jb_j)(x,t) , \mathcal{G}_{\mathcal{L}+\alpha,\mathbb{B}}\left( f\right)(x,t) \right\rangle_{\B^*,\B} \frac{ dt}{t} \\
        & \qquad \qquad = \int_0^\infty \left\langle
                 \mathcal{G}_{\mathcal{L}+\alpha,\mathbb{B}^*}(a)(x,t) , \mathcal{G}_{\mathcal{L}+\alpha,\mathbb{B}}\left( f\right)(x,t) \right\rangle_{\B^*,\B} \frac{ dt}{t},
        \quad \text{a.e. } x \in \R.
    \end{align*}
    Moreover, since $\left\langle b_j, f \right\rangle_{\B^*,\B} \in BMO_\LL(\R)$, $j=1, \dots, m$, from \eqref{2.6.1} we deduce that
    \begin{align*}
        & \int_\R \left| \left\langle  \mathcal{G}_{\mathcal{L}+\alpha,\B^*}(a)(x,\cdot), \mathcal{G}_{\mathcal{L}+\alpha,\B}(f)(x,\cdot)  \right\rangle_{\gamma(H,\B^*),\gamma(H,\B)} \right| dx \\
        & \qquad \qquad \leq \sum_{j=1}^m \int_\R \int_0^\infty
                 \left| \mathcal{G}_{\mathcal{L}+\alpha,\mathbb{C}}(a_j)(x,t) \right|
                 \left| \mathcal{G}_{\mathcal{L}+\alpha,\mathbb{C}}\left(\left\langle b_j, f \right\rangle_{\B^*,\B}\right)(x,t) \right|
                  \frac{dt dx}{t}
        < \infty.
    \end{align*}
    Hence, according to Proposition~\ref{Prop2.2} and the results proved in \eqref{2.1.2} we get
    \begin{align*}
        \left| \int_{\R} \langle a(x) , f(x) \rangle_{\B^*,\B}dx \right|
            = & 4 \left| \int_{\R}
                \langle \mathcal{G}_{\mathcal{L}+\alpha,\B^*}(a)(x,\cdot) , \mathcal{G}_{\mathcal{L}+\alpha,\B}(f)(x,\cdot) \rangle_{\gamma(H,\B^*),\gamma(H,\B)} dx \right| \\
            \leq & C  \|\mathcal{G}_{\mathcal{L}+\alpha,\B^*}(a)\|_{H^1_\LL(\R,\gamma(H,\B^*))} \|\mathcal{G}_{\mathcal{L}+\alpha,\B}(f)\|_{BMO_\LL(\R,\gamma(H,\B))}\\
            \leq & C  \|a\|_{H^1_\LL(\R,\B^*)} \|\mathcal{G}_{\mathcal{L}+\alpha,\B}(f)\|_{BMO_\LL(\R,\gamma(H,\B))}.
    \end{align*}
    We conclude that
    $$\|f\|_{BMO_\LL(\R,\B)}
        \leq C \|\mathcal{G}_{\mathcal{L}+\alpha,\B}(f)\|_{BMO_\LL(\R,\gamma(H,\B))}.$$

    \subsection{}\label{subsec:2.4}

    We are going to show that, for every $g \in H^1_\LL(\R,\B)$,
    \begin{equation}\label{2.11}
        \|g\|_{H^1_\LL(\R,\B)}
            \leq C \|\mathcal{G}_{\mathcal{L}+\alpha,\B}(g)\|_{H^1_\LL(\R,\gamma(H,\B))}.
    \end{equation}
    Suppose that $a \in \mathcal{A} \otimes \B$, where $\mathcal{A}$ is defined in Section~\ref{subsec:2.3}. Since $\B$ is UMD,
    $\left( H^1_\LL(\R,\B) \right)^*=BMO_\LL(\R,\B^*)$, and we have that
    $$\|a\|_{H^1_\LL(\R, \B)}
        =  \sup_{\substack{ f \in BMO_\LL(\R,\B^*)  \\  \|f\|_{BMO_\LL(\R,\B^*)} \leq 1}}
        \left| \int_\R \langle f(x), a(x) \rangle_{\B^*,\B} dx \right|.$$
    Moreover, for every $f \in BMO_\LL(\R,\B^*)$, since $\mathcal{G}_{\mathcal{L}+\alpha,\B^*}$ is bounded from $BMO_\LL(\R,\B^*)$
    into $BMO_\LL(\R,\gamma(H,\B^*))$ (see Subsection~\ref{subsec:2.1}), again by Proposition~\ref{Prop2.2} it follows that
    \begin{align*}
        \left| \int_{\R} \langle f(x) , a(x) \rangle_{\B^*,\B}dx \right|
            \leq & C \|\mathcal{G}_{\mathcal{L}+\alpha,\B^*}(f)\|_{BMO_\LL(\R,\gamma(H,\B^*))} \|\mathcal{G}_{\mathcal{L}+\alpha,\B}(a)\|_{H^1_\LL(\R,\gamma(H,\B))} \\
            \leq & C \|f\|_{BMO_\LL(\R,\B^*)} \|\mathcal{G}_{\mathcal{L}+\alpha,\B}(a)\|_{H^1_\LL(\R,\gamma(H,\B))}.
    \end{align*}
    Hence,
    $$\|a\|_{H^1_\LL(\R,\B)}
            \leq C \|\mathcal{G}_{\mathcal{L}+\alpha,\B}(a)\|_{H^1_\LL(\R,\gamma(H,\B))}.$$
    Since $\mathcal{A} \otimes \B$ is a dense subspace in $H^1_\LL(\R,\B)$ and $\mathcal{G}_{\mathcal{L}+\alpha,\B}$ is bounded
    from $H^1_\LL(\R,\B)$ into $H^1_\LL(\R,\gamma(H,\B))$ (see Subsection~\ref{subsec:2.2}) we conclude that \eqref{2.11} holds for every $g \in H^1_\LL(\R,\B)$.

    \section{Proof of Theorem~\ref{Th1.2}}  \label{sec:Th1.2}

    \subsection{}\label{subsec:3.1}
    We are going to prove that  the operator $T_{j,+}^\LL$ is bounded from
    $BMO_\LL(\R,\B)$ into $BMO_\LL(\R,\gamma(H,\B))$. The corresponding property
    for  $T_{j,-}^\LL$ when $n \geq 3$ can be shown in a similar way.\\

    We consider the function $\Omega$ defined by
    $$\Omega(x,y,t)
        = \frac{t^2}{\sqrt{4 \pi}} \int_0^\infty \frac{e^{-t^2/4s}}{s^{3/2}} ( \partial_{x_j} + x_j) W_s^\LL (x,y) ds,
        \quad x,y \in \R \text{ and } t>0.$$
    We have that
    $$ ( \partial_{x_j} + x_j) W_s^\LL (x,y)
        = \left( x_j - \frac{1}{2} \frac{1+e^{-2s}}{1-e^{-2s}}(x_j-y_j) - \frac{1}{2} \frac{1-e^{-2s}}{1+e^{-2s}}(x_j+y_j)  \right) W_s^\LL (x,y) ,
        \quad x,y \in \R \text{ and } s>0.$$
    Note that $|a| \leq |a+b|+|a-b|$, $a,b \in \mathbb{R}$. Then, it follows that, for every $x,y \in \R$ and $s>0$,
    \begin{align*}
        \left| ( \partial_{x_j} + x_j) W_s^\LL (x,y) \right|
            \leq C \frac{1}{\sqrt{1-e^{-2s}}} \left( \frac{e^{-2s}}{1-e^{-4s}} \right)^{n/2}
                \exp \left( -\frac{1}{8} \left( \frac{1+e^{-2s}}{1-e^{-2s}}|x-y|^2 + \frac{1-e^{-2s}}{1+e^{-2s}}|x+y|^2  \right) \right).
    \end{align*}
   As in \eqref{2.1} we obtain, for every $x,y \in \R$ and $t>0$,
 \begin{align}\label{Omega}
|\Omega(x,y,t)|
        \leq &C t^2 e^{-c(|x-y|^2+|y| |x-y|)}
            \int_0^\infty \frac{e^{-c(t^2+|x-y|^2)/s}e^{-ns}}{s^{3/2} (1-e^{-4s})^{(n+1)/2}}ds\nonumber\\
    \leq &C t^2 e^{-c(|x-y|^2+|y| |x-y|)}
            \int_0^\infty \frac{e^{-c(t^2+|x-y|^2)/s}}{s^{(n+4)/2} } ds\leq C\frac{t^2e^{-c(|x-y|^2+|y| |x-y|)}}{(t+|x-y|)^{n+2}}.
\end{align}
    Hence, it follows that
    \begin{align}\label{N2.0}
        \|\Omega(x,y,\cdot)\|_H
            \leq & C e^{-c(|x-y|^2+|y| |x-y|)}
                \left(\int_0^\infty \frac{t^3}{ (t+|x-y|)^{2n+4}}dt\right)^{1/2}\nonumber\\
    \leq &C\frac{e^{-c(|x-y|^2+|y| |x-y|)}}{|x-y|^n},
            \quad x,y \in \R, \ x \neq y.
    \end{align}

    Let $i=1, \dots, n$. We can write, if $i \neq j$,
    \begin{align*}
        \partial_{x_i}( \partial_{x_j} + x_j) W_s^\LL (x,y)
            = & - \left( x_j - \frac{1}{2} \frac{1+e^{-2s}}{1-e^{-2s}}(x_j-y_j) - \frac{1}{2} \frac{1-e^{-2s}}{1+e^{-2s}}(x_j+y_j)  \right) \\
            & \times      \left(\frac{1}{2} \frac{1+e^{-2s}}{1-e^{-2s}}(x_i-y_i) + \frac{1}{2} \frac{1-e^{-2s}}{1+e^{-2s}}(x_i+y_i)  \right)W_s^\LL (x,y) ,
        \quad x,y \in \R \text{ and } s>0,
    \end{align*}
    and
    \begin{align*}
        \partial_{x_j}( \partial_{x_j} + x_j) W_s^\LL (x,y)
            = &  -\Big\{ \frac{2 e^{-4s}}{1-e^{-4s}} + \left( x_j- \frac{1}{2} \frac{1+e^{-2s}}{1-e^{-2s}}(x_j-y_j) - \frac{1}{2} \frac{1-e^{-2s}}{1+e^{-2s}}(x_j+y_j) \right) \\
            & \times \left(\frac{1}{2} \frac{1+e^{-2s}}{1-e^{-2s}}(x_j-y_j) + \frac{1}{2} \frac{1-e^{-2s}}{1+e^{-2s}}(x_j+y_j)  \right)\Big\}W_s^\LL (x,y) ,
        \quad x,y \in \R \text{ and } s>0.
    \end{align*}
    Then, we get, for each $x,y \in \R$ and $s>0$,
    $$|\partial_{x_i}( \partial_{x_j} + x_j) W_s^\LL (x,y)|
        \leq C \frac{1}{1-e^{-2s}} \left( \frac{e^{-2s}}{1-e^{-4s}} \right)^{n/2}
                \exp \left( -\frac{1}{8} \left( \frac{1+e^{-2s}}{1-e^{-2s}}|x-y|^2 + \frac{1-e^{-2s}}{1+e^{-2s}}|x+y|^2  \right) \right).$$
    By proceeding as above we obtain
    \begin{align}\label{N3.1}
        \|\partial_{x_i} \Omega(x,y,\cdot)\|_H
            \leq &  \frac{C}{|x-y|^{n+1}},
            \quad x,y \in \R, \ x \neq y.
    \end{align}
    In a similar way we can see that
    \begin{align}\label{N3.2}
        \|\partial_{y_i} \Omega(x,y,\cdot)\|_H
            \leq &  \frac{C}{|x-y|^{n+1}},
            \quad x,y \in \R, \ x \neq y.
    \end{align}
    Putting together \eqref{N3.1} and \eqref{N3.2} we conclude that
    \begin{align*} 
        \| \nabla_x \Omega(x,y,\cdot)\|_H +  \| \nabla_y \Omega(x,y,\cdot)\|_H
            \leq &  \frac{C}{|x-y|^{n+1}},
            \quad x,y \in \R, \ x \neq y.
    \end{align*}

    According to \cite[Theorem 2]{BCCFR} the operator $T^\LL_{j,+}$ is bounded from $L^2(\R,\B)$ into
    $L^2(\R, \gamma(H,\B))$. Moreover, the same argument we have used in Subsection~\ref{subsec:2.1} allows us to show that, for every
    $f \in L^\infty_c(\R,\B)$,
    $$T^\LL_{j,+}(f)(x,t)
        = \left( \int_\R \Omega (x,y, \cdot) f(y) dy \right)(t),
        \quad \text{a.e. } x \notin \supp(f). $$
    By taking into account \eqref{2.1.0}, for each $x \in \R$ and $s>0$,  we obtain that
    $$ (\partial_{x_j} + x_j) W_s^\mathcal{L}(1)(x)
            = \frac{1}{\pi^{n/2}} \left( \frac{e^{-2s}}{1+e^{-4s}} \right)^{n/2} \left( 1-\frac{1-e^{-4s}}{1+e^{-4s}} \right)x_j
                \exp\left(-\frac{1-e^{-4s}}{2(1+e^{-4s})}|x|^2\right). $$
    Hence, Minkowski's inequality leads to
    \begin{align*}
        \| T^\LL_{j,+}(1)(x,\cdot) \|_H
            \leq & C \int_0^\infty \frac{e^{-s}}{\sqrt{s}} \left\| t (\partial_{x_j} + x_j) W_{t^2/4s}^\mathcal{L}(1)(x)  \right\|_H ds \\
            \leq & C \int_0^\infty e^{-s}\left\| \sqrt{u} (\partial_{x_j} + x_j) W_{u}^\mathcal{L}(1)(x)  \right\|_H ds
            \leq C, \quad  x \in \R.
    \end{align*}
    In a similar way we can see that $\nabla_x T^\LL_{j,+}(1) \in L^\infty(\R,H)$.\\

    By using Theorem~\ref{Th1.0} we conclude that $T^\LL_{j,+}$  is bounded from $BMO_\LL(\R,\B)$ into $BMO_\LL(\R,\gamma(H,\B))$.

    \subsection{}\label{subsec:3.2}

    We are going to see that $T^\LL_{j,+}$ is a bounded operator from $H^1_\LL(\R,\B)$ into $H^1_\LL(\R,\gamma(H,\B))$.
    The boundedness property of $T^\LL_{j,-}$ can be proved  in a similar way, for $n \geq 3$.\\

    In Subsection~\ref{subsec:3.1} we saw that $T^\LL_{j,+}$ is a Calder\'on-Zygmund operator. Hence, it follows that
    $T^\LL_{j,+}$ can be extended from $L^2(\R,\B)\cap L^1(\R,\B)$ to $L^1(\R,\B)$ as a bounded operator from
    $H^1(\R,\B)$ into $L^1(\R,\gamma(H,\B))$ and from $L^1(\R,\B)$ into $L^{1,\infty}(\R,\gamma(H,\B))$.
    Moreover, according to \cite[Theorem 2]{BCCFR}, $T^\LL_{j,+}$ is a bounded operator from
    $H^1(\R,\B)$ into $L^1(\R,\gamma(H,\B))$ and from $L^1(\R,\B)$ into $L^{1,\infty}(\R,\gamma(H,\B))$.
    By using \eqref{N2.0}, the procedure developed in Subsection~\ref{subsec:2.2} allows us to see that the operator $T^\LL_{j,+}$
    is bounded from $H^1_\LL(\R,\B)$ into $L^1(\R,\gamma(H,\B))$.\\

    We consider the maximal operator $S$ defined by
    $$S(f)(x)
        = \sup_{s>0} \left\| P_s^{\LL+2} \left( T_{j,+}^\LL (f) \right)(x,\cdot)\right\|_{\gamma(H,\B)}.$$
    According to Proposition~\ref{Prop2.4} the proof of our objective will be finished when we establish that
    the operator $S$ is bounded from $H^1_\LL(\R,\B)$ into $L^1(\R)$.\\

    The maximal operator $\mathcal{M}_*$ given by
    $$\mathcal{M}_*(g)
        = \sup_{s>0} \| P_s^{\LL+2}(g) \|_{\gamma(H,\B)}$$
    is known to be bounded from $L^p(\R,\gamma(H,\B))$ into $L^p(\R)$, for every $1<p<\infty$, and from $L^1(\R,\gamma(H,\B))$ into $L^{1,\infty}(\R)$.
    Since $T_{j,+}^\LL$ is bounded from $L^p(\R,\B)$ into $L^p(\R,\gamma(H,\B))$, $1 < p <\infty$, from
    $L^1(\R,\B)$ into $L^{1,\infty}(\R,\gamma(H,\B))$, and from $H^1(\R,\B)$ into $L^{1}(\R,\gamma(H,\B))$,
    the operator $\mathbb{S}$ defined by
    $$\mathbb{S}(f)(x,s,t)
        = P_s^{\LL+2} \left( T_{j,+}^\LL (f)(\cdot, t) \right) (x) $$
    is bounded from $L^p(\R,\B)$ into $L^p(\R,L^\infty((0,\infty),\gamma(H,\B)))$, $1 < p <\infty$
    and from $H^1(\R,\B)$ into $L^{1,\infty}(\R,L^\infty((0,\infty),\gamma(H,\B)))$.\\

    According to \cite[Lemmas 4.1 and 4.2]{ST1} we have that, for every $f \in L^\infty_c(\R) \otimes \B$,
    $$\mathbb{S}(f)(x,s,t)
        = t (\partial_{x_j} + x_j) P_{s+t}^{\LL} (f)(x),
        \quad x \in \R \text{ and } s,t >0. $$

    We consider the function
    $$\Y(x,y,s,t)
        = t \frac{s+t}{\sqrt{4 \pi}} \int_0^\infty \frac{e^{-(s+t)^2/4u}}{u^{3/2}} (\partial_{x_j} + x_j) W_u^\LL(x,y) du,
        \quad x,y \in \R, \ x \neq y \text{ and } s,t>0. $$
    By proceeding as in \eqref{N2.0} we can see that
    \begin{align} \label{N3.4}
        \| \Y(x,y,\cdot,\cdot)\|_{L^\infty((0,\infty),H)}
            \leq & C \frac{e^{-c(|x-y|^2+|y| |x-y|)}}{|x-y|^{n}},
            \quad x,y \in \R, \ x \neq y,
    \end{align}
    and
    \begin{align*}
        \| \nabla_x\Y(x,y,\cdot,\cdot)\|_{L^\infty((0,\infty),H)} + \| \nabla_y \Y(x,y,\cdot,\cdot)\|_{L^\infty((0,\infty),H)}
            \leq &  \frac{C}{|x-y|^{n+1}},
            \quad x,y \in \R, \ x \neq y.
    \end{align*}

    Moreover, as in Subsection~\ref{subsec:2.2} we can see that, for every $g \in L^\infty_c(\R) \otimes \B$,
    $$\mathbb{S}(g)(x,s,t)
        = \left( \int_\R \mathcal{Y}(x,y,\cdot,\cdot) g(y) dy \right)(s,t),
        \quad x \notin \supp(g),$$
    being the integral understood in the $L^\infty((0,\infty),\gamma(H,\B))$-Bochner sense.\\

    Vector valued Calder\'on-Zygmund theory implies that the operator $\mathbb{S}$ can be extended from
    $L^2(\R,\B) \cap L^1(\R,\B)$ to $L^1(\R,\B)$ as a bounded operator from
    $L^1(\R,\B)$ to $L^{1,\infty}(\R,L^\infty((0,\infty),$ $\gamma(H,\B)))$ and from
    $H^1(\R,\B)$ into $L^{1}(\R,L^\infty((0,\infty),\gamma(H,\B)))$. In order to see that
    $\mathbb{S}$ is in fact bounded from $L^1(\R,\B)$ into $L^{1,\infty}(\R,L^\infty((0,\infty),\gamma(H,\B)))$
    and from $H^1(\R,\B)$ into $L^{1}(\R,$ $L^\infty((0,\infty),\gamma(H,\B)))$,
    we can proceed as in the end of the proof of Proposition~\ref{Prop2.5}.\\

    By taking into account that
    \begin{itemize}
        \item \eqref{N3.4} holds,
        \item $\mathbb{S}$ is bounded from $L^1(\R,\B)$ into $L^{1,\infty}(\R,L^\infty((0,\infty),\gamma(H,\B)))$,
        \item $\mathbb{S}$ is bounded from $H^1(\R,\B)$ into $L^{1}(\R,L^\infty((0,\infty),\gamma(H,\B)))$,
    \end{itemize}
    we can prove, by using the procedure employed in the final part of Subsection~\ref{subsec:2.2},
    that $\mathbb{S}$ is bounded from $H^1_\LL(\R,\B)$ into $L^{1}(\R,L^\infty((0,\infty),\gamma(H,\B)))$.\\

    Thus the proof of Theorem~\ref{Th1.2} for $T_{j,+}^\LL$ is finished.

    \section{Proof of Theorem~\ref{Th1.3}}  \label{sec:Th1.3}

    Theorems \ref{Th1.1} and \ref{Th1.2} show that $(i)$ implies  $(ii)$ and $(i)$ implies  $(iii)$.\\

    Suppose that  $(ii)$ is true for some $j=1,\dots,n.$ Let $f=\sum_{i=1}^m f_i b_i,$ where
    $f_i\in H^1_\LL(\R)$ and $b_i\in \B,$ $i=1,\dots,m\in \mathbb{N}.$ We denote by $R_{j,+}^\LL$ the $j$-th Riesz transform in the
    Hermite setting (see Appendix for definitions).  According to Proposition \ref{Pr3.2},
    $$R_{j,+}^\LL(f)=\sum_{i=1}^m b_i R_{j,+}^\LL (f_i) \in H_\LL^1(\R)\otimes\B.$$

    By applying \cite[Lemmas 4.1 and 4.2]{ST1} we get, for every atom $a$ for $H^1_\LL(\R)$,
    $$T_{j,+}^\LL(a)=-\mathcal{G}_{\LL+2,\mathbb{C}} R_{j,+}^\LL(a).$$
    Moreover, $T_{j,+}^\LL$ and $\mathcal{G}_{\LL+2,\mathbb{C}} \circ R_{j,+}^\LL$ are bounded operators from $H^1_\LL(\R)$ into $H^1_\LL(\R,H)$
    (see Theorem~\ref{Th1.2}, Proposition~\ref{Pr3.2} and Theorem~\ref{Th1.1}). Then, we have that
    $$T_{j,+}^\LL(g)=-\mathcal{G}_{\LL+2,\mathbb{C}} R_{j,+}^\LL(g), \quad g \in H^1_\LL(\R),$$
    and this implies
    $$T_{j,+}^\LL(f)=-\mathcal{G}_{\LL+2,\B} R_{j,+}^\LL(f).$$

    We can write
    \begin{align*}
        \|R_{j,+}^\LL(f)\|_{H^1_\LL(\R,\B)}
            \leq &C\|\mathcal{G}_{\LL+2,\B} R_{j,+}^\LL (f)\|_{H^1_\LL(\R,\gamma(H,B))}
            =  C\|T_{j,+}^\LL f \|_{H^1_\LL(\R, \gamma(H,\B))}
            \leq C\|f\|_{H^1_\LL(\R,\B)}.
    \end{align*}
    Since $H^1_\LL(\R)\otimes \B$ is a dense subspace of $H^1_\LL(\R,\B)$ (\cite[Lemma 2.4]{Hy2}), $H^1(\R,\B) \subset H^1_\LL(\R,\B)$
    and $H^1_\LL(\R,\B) \subset L^1(\R,\B)$, \cite[Theorem 4.1]{MTX}
    implies that $R_{j,+}^\LL$ can be extended to $L^2(\R,\B)$ as a bounded operator from $L^2(\R,\B)$ into itself.
    Then, from \cite[Theorem 2.3]{AT} we deduce that $\B$ is UMD.\\

    Assume now $(iii)$ holds for some $j=1,\dots, n.$ By proceeding as above, this time applying Proposition~\ref{Pr3.1},
    we can see that, for every $f\in L^\infty_c(\R)\otimes \B,$
    \begin{equation}\label{r3.8}
    \|R_{j,+}^\LL (f) \|_{BMO_\LL(\R, \B)}\leq C\|f\|_{BMO_\LL(\R,\B)}.
    \end{equation}

    Let $\E$ be a finite dimensional subspace of $\B.$ By taking into account that
    $L^\infty_c(\R)\otimes \E= L^\infty_c(\R,\E) \subset BMO_\LL(\R,\E)$ and $BMO_\LL(\R,\E) \subset BMO(\R,\E)$, from \eqref{r3.8} and
    \cite[Theorem 4.1]{MTX} we deduce that $R_{j,+}^\LL$ can be extended to $L^2(\R,\E)$ as a bounded operator from $L^2(\R, \E)$ into itself and
    $$ \| R_{j,+}^\LL(f) \|_{L^2(\R,\E)}\leq C\|f\|_{L^2(\R,\E)}, \quad f\in L^2(\R,\E),$$
    where $C>0$ does not depend on $\E.$ Hence,
    $$ \|R_{j,+}^\LL(f)\|_{L^2(\R,\B)}\leq C\|f\|_{L^2(\R,\B)}, \quad f\in L^2(\R)\otimes \B.$$
    From \cite[Theorem 2.3]{AT} it follows that $\B$ is UMD.\\

    The proof of the result when $T_{j,+}^\LL$ and $\mathcal{G}_{\LL + 2,\B}$ are replaced by $T_{j,-}^\LL$
    and $\mathcal{G}_{\LL-2, \B}$, respectively, can be made similarly, for every $n \geq 3$.

    \section{Appendix}  \label{sec:app}

    The Hermite operator $L$ can be written as follows
    $$ L=-\frac{1}{2}[(\nabla + x)(\nabla - x)+(\nabla -x)(\nabla + x)].$$
    This decomposition suggests to call Riesz transforms in the Hermite setting to the operators formally defined by
    \begin{equation}\label{3.1}
    R_{j,\pm}^\LL= \left( \partial_{x_j} \pm x_j \right)\LL^{-1/2}, \quad j=1,\dots, n.
    \end{equation}
    (see \cite{ST1} and \cite{Th2}).\\

    Let $j=1, \dots,n.$ We denote by $e_j$ the $j-$th coordinate vector in $\R.$ It is well known that
    \begin{equation}\label{3.2}
    \left(\partial_{x_j}+x_j\right)h_k=(2k_j)^{1/2}h_{k-e_j}, \quad \left( \partial_{x_j}-x_j\right)h_k= -(2k_j+2)^{1/2}h_{k+e_j},
    \end{equation}
    for every $k=(k_1,\dots, k_n)\in \mathbb{N}^n$.\\

    The negative square root $\LL^{-1/2}$ of $\LL$ is defined by
    $$ \LL^{-1/2}(f)(x)
        =\int_0^\infty P_t^\LL(f)(x) dt, \quad f\in L^2(\R).$$
    We have that
    \begin{equation}\label{3.3}
    \LL^{-1/2}(f)= \sum_{k\in \mathbb{N}^n}\frac{1}{\sqrt{2|k|+n}}\langle f, h_k \rangle h_k, \quad f\in L^2(\R).
    \end{equation}
    Equalities \eqref{3.1}, \eqref{3.2} and \eqref{3.3} lead to define the Riesz transforms $R_{j,\pm}^\LL$ by
    $$R_{j,+}^\LL(f)=\sum_{k\in \mathbb{N}^n} \sqrt{\frac{2k_j}{2|k|+n}} \langle f, h_k \rangle h_{k-e_j}, \quad f\in L^2(\R),$$
    and
    $$R_{j,-}^\LL(f)=-\sum_{k\in \mathbb{N}^n} \sqrt{\frac{2k_j+2}{2|k|+n}} \langle f, h_k \rangle h_{k+e_j}, \quad f\in L^2(\R).$$
    Plancherel equality imply that $R_{j,\pm}^\LL$ is bounded from $L^2(\R)$ into itself. $L^p-$boundedness properties of $R_{j,\pm}^\LL$ were
    established by Stempak and Torrea in \cite{ST1} (see also \cite{Th1}). They use Calder\'on-Zygmund theory and show that $R_{j,\pm}^\LL$
    are singular integrals associated to the Calder\'on-Zygmund kernels
    \begin{equation}\label{Rkern}
        R_{j,\pm}^\LL(x,y)= \frac{1}{\sqrt{\pi}}\int_0^\infty \left( \partial_{x_j}\pm x_j \right) P_t^\LL(x,y)  dt, \quad x,y\in \R, \ x\neq y.
    \end{equation}
    $R_{j,\pm}^\LL$ can be extended from $L^2(\R)\cap L^p(\R)$ to $L^p(\R)$ as a bounded operator from
    $L^p(\R)$ into itself, $1<p<\infty,$ and from $L^1(\R)$ into $L^{1,\infty}(\R)$ (\cite[Corollary 3.4]{ST1}).
    We continue denoting by $R_{j,\pm}^\LL$ the extended operators.\\

    In the following propositions we analyze the behavior of $R_{j,\pm}^\LL$, $j=1, \dots, n$ in the spaces $BMO_\LL(\R)$ and
    $H^1_\LL(\R)$.

    \begin{Prop}\label{Pr3.1}
        Let $j=1,\dots, n.$ Then, the Riesz transforms $R_{j,\pm}^\LL$ are bounded from $BMO_\LL(\R)$ into itself.
    \end{Prop}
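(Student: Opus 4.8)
The plan is to derive Proposition~\ref{Pr3.1} from the $T1$-type criterion of Theorem~\ref{Th1.0} applied in the scalar case $\B_1=\B_2=\mathbb{C}$. Since $R_{j,\pm}^\LL$ acts on constants by $R_{j,\pm}^\LL(b)=b\,R_{j,\pm}^\LL(1)$, $b\in\mathbb{C}$, the remark following Theorem~\ref{Th1.0} (with $L(\mathbb{C},\mathbb{C})=\mathbb{C}$) reduces the verification of its conditions $(i)$ and $(ii)$ to showing that $R_{j,\pm}^\LL(1)\in L^\infty(\R)$ and $\nabla R_{j,\pm}^\LL(1)\in L^\infty(\R)$. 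Thus the proof splits into two tasks: checking that $R_{j,\pm}^\LL$ is a $(\mathbb{C},\mathbb{C})$-Hermite-Calder\'on-Zygmund operator, and estimating its action on the constant function $1$. The $L^2(\R)$-boundedness needed in the first task is immediate from Plancherel's identity applied to the multiplier expressions defining $R_{j,\pm}^\LL$ (see also \cite{ST1}).

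For the kernel estimates I would rewrite \eqref{Rkern} by means of the heat-semigroup subordination $\LL^{-1/2}=\frac{1}{\sqrt\pi}\int_0^\infty W_s^\LL\,s^{-1/2}\,ds$, so that
\begin{equation*}
R_{j,\pm}^\LL(x,y)=\frac{1}{\sqrt\pi}\int_0^\infty (\partial_{x_j}\pm x_j)W_s^\LL(x,y)\,\frac{ds}{\sqrt s},\quad x,y\in\R,\ x\neq y.
\end{equation*}
The pointwise bounds for $(\partial_{x_j}\pm x_j)W_s^\LL(x,y)$ and for its first spatial derivatives were already obtained in Subsection~\ref{subsec:3.1} (cf.\ the estimates leading to \eqref{Omega}, \eqref{N3.1} and \eqref{N3.2}). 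Inserting them and integrating against $s^{-1/2}\,ds$ exactly as in the passage from \eqref{2.1} to \eqref{2.0} yields the size estimate
\begin{equation*}
|R_{j,\pm}^\LL(x,y)|\leq C\,\frac{e^{-c(|x-y|^2+|x||x-y|)}}{|x-y|^n},\quad x,y\in\R,\ x\neq y,
\end{equation*}
together with $|\nabla_x R_{j,\pm}^\LL(x,y)|+|\nabla_y R_{j,\pm}^\LL(x,y)|\leq C|x-y|^{-n-1}$. The gradient bound produces the Calder\'on-Zygmund smoothness condition of the Hermite-Calder\'on-Zygmund definition by the mean value theorem, so $R_{j,\pm}^\LL$ is indeed such an operator.

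For the action on $1$ I would use \eqref{2.1.0} together with the explicit expression for $(\partial_{x_j}\pm x_j)W_s^\LL(1)$ from Subsection~\ref{subsec:3.1}; for instance $(\partial_{x_j}+x_j)W_s^\LL(1)(x)=\frac{1}{\pi^{n/2}}\left(\frac{e^{-2s}}{1+e^{-4s}}\right)^{n/2}\frac{2e^{-4s}}{1+e^{-4s}}\,x_j\exp\left(-\frac{1-e^{-4s}}{2(1+e^{-4s})}|x|^2\right)$, with the factor $\frac{2e^{-4s}}{1+e^{-4s}}$ replaced by $\frac{2}{1+e^{-4s}}$ in the case of $R_{j,-}^\LL$. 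Writing $R_{j,\pm}^\LL(1)(x)=\frac{1}{\sqrt\pi}\int_0^\infty (\partial_{x_j}\pm x_j)W_s^\LL(1)(x)\,\frac{ds}{\sqrt s}$ and splitting the $s$-integral at $s=1$, the part $s\geq1$ is controlled by $\int_1^\infty s^{-1/2}e^{-ns}\,ds$ times the bounded quantity $|x|e^{-c|x|^2}$, while on $0<s<1$ one uses $|x_j|\leq|x|$ and $\frac{1-e^{-4s}}{2(1+e^{-4s})}\geq cs$ to bound the integrand by $Cs^{-1/2}|x|e^{-cs|x|^2}$; the change of variables $u=s|x|^2$ then gives $\int_0^\infty u^{-1/2}e^{-cu}\,du<\infty$, uniformly in $x$. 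Hence $R_{j,\pm}^\LL(1)\in L^\infty(\R)$, and differentiating under the integral sign and repeating the same splitting shows $\nabla R_{j,\pm}^\LL(1)\in L^\infty(\R)$. Theorem~\ref{Th1.0} then gives the boundedness of $R_{j,\pm}^\LL$ on $BMO_\LL(\R)$.

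The routine part is the $L^2$-boundedness and the Calder\'on-Zygmund kernel estimates, which essentially repeat the computations of Subsection~\ref{subsec:3.1} with the weight $s^{-1/2}$ in place of the factor coming from $t\partial_t$. The delicate point I expect to be the main obstacle is the uniform-in-$x$ control of $R_{j,\pm}^\LL(1)$ and $\nabla R_{j,\pm}^\LL(1)$ near $s=0$: there the heat factor is of size $s^{-(n+1)/2}$, and one must exploit the full Gaussian decay $e^{-cs|x|^2}$ (respectively $e^{-c|x-y|^2/s}$ for the kernel) to absorb this singularity and obtain an $x$-independent bound after the change of variables. Once these $L^\infty$ bounds on $R_{j,\pm}^\LL(1)$ are secured, no dimension restriction is required and the conclusion follows directly from Theorem~\ref{Th1.0}.
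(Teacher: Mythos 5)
Your proof is correct, but it takes a genuinely different route from the paper's. You apply the $T1$ criterion (Theorem~\ref{Th1.0} in its scalar form, i.e. \cite[Theorem 1.1]{BCFST}) directly to the whole operator $R_{j,\pm}^\LL$: Plancherel gives $L^2$-boundedness (note $\sqrt{(2k_j+2)/(2|k|+n)}\leq \sqrt{2}$ for every $n\geq 1$, so no dimension restriction arises in the scalar case), heat subordination with the weight $s^{-1/2}$ gives the Hermite--Calder\'on--Zygmund kernel bounds, and the explicit formula for $(\partial_{x_j}\pm x_j)W_s^\LL(1)$ obtained from \eqref{2.1.0} gives $R_{j,\pm}^\LL(1),\ \nabla R_{j,\pm}^\LL(1)\in L^\infty(\R)$, which by the remark following Theorem~\ref{Th1.0} settles conditions $(i)$ and $(ii)$. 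The paper instead splits $R_{j,+}^\LL=\bigl(R_{j,+}^\LL-x_j\LL^{-1/2}\bigr)+x_j\LL^{-1/2}$: the first summand, $\partial_{x_j}\LL^{-1/2}$, is quoted as bounded on $BMO_\LL(\R)$ from \cite[Section 4.3]{BCFST}, so the $T1$ machinery is deployed only for the multiplication-type operator $T_j=x_j\LL^{-1/2}$, whose kernel $M_j(x,y)=\frac{x_j}{\sqrt{\pi}}\int_0^\infty W_t^\LL(x,y)\,t^{-1/2}\,dt$, $L^2$-boundedness (via \cite[Lemma 3]{BT}) and action on $1$ are estimated very much as you estimate the full operator. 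Your computations check out: the exponent count $s^{-1/2}\cdot s^{-(n+1)/2}$ integrated against $e^{-c|x-y|^2/s}$ does yield $|x-y|^{-n}$ (and $|x-y|^{-n-1}$ for the gradients), your explicit formulas $\frac{2e^{-4s}}{1+e^{-4s}}$ and $\frac{2}{1+e^{-4s}}$ for the two signs are right, and your small-$s$/large-$s$ splitting with the change of variables $u=s|x|^2$ is exactly the kind of argument the paper runs for $T_j(1)$. One small caveat: Subsection~\ref{subsec:3.1} only writes out the pointwise bounds for the ``$+$'' case; for ``$-$'' one should note that the coefficient decomposes as $-\frac{x_j+y_j}{1+e^{-2s}}-\frac{x_j-y_j}{1-e^{-2s}}$, which admits the same Gaussian absorption, so the claimed bounds do hold but are not literally ``already obtained'' there. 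As for what each approach buys: the paper's splitting shortens the argument by outsourcing the derivative part to earlier work, while your version is self-contained, treats both signs uniformly, makes transparent that no restriction $n\geq 3$ is needed here (that hypothesis elsewhere stems only from the vector-valued $L^2$ theory of $T_{j,-}^\LL$ in \cite[Theorem 2]{BCCFR}), and in passing produces the Gaussian-decaying kernel bound of type \eqref{Hbound} that the paper must derive separately in the proof of Proposition~\ref{Pr3.2}.
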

    \begin{proof}
        We only analyze  $R_{j,+}^\LL.$ The operator $R_{j,-}^\LL$ can be studied similarly.
        In \cite[Section 4.3]{BCFST} it was shown that the operator $R_{j,+}^\LL-x_j\LL^{-1/2}$
        is bounded from $BMO_\LL(\R)$ into itself.\\

        We consider now the operator $T_j=x_j \LL^{-1/2}.$ By \eqref{1.3.1} we can write
        $$ T_j(f)(x)=\frac{x_j}{\sqrt{\pi}} \int_0^\infty W_t^\LL (f)(x) \frac{dt}{\sqrt{t}}= \int_0^\infty M_j(x,y)f(y)dy, \quad f\in L^2(\R),$$
        where
        $$ M_j(x,y)= \frac{x_j}{\sqrt{\pi}} \int_0^\infty W_t^\LL(x,y)\frac{dt}{\sqrt{t}}, \quad x,y\in \R, \ x\neq y.$$
        According to \cite[Lemma 3]{BT} the operator $T_j$ is bounded from $L^2(\R)$
        into itself.\\

        We are going to show that
        \begin{equation*}\label{3.7}
        |M_j(x,y)|\leq C\frac{e^{-c(|x-y|^2+|x||x-y|)}}{|x-y|^n}, \quad x,y\in \R \text{ and } x\neq y,
        \end{equation*}
        and
        \begin{equation*}\label{3.8}
        |\nabla_x M_j(x,y)|+|\nabla_y M_j(x,y)| \leq \frac{C}{|x-y|^{n+1}}, \quad x,y\in \R \text{ and } x\neq y.
        \end{equation*}
  By using \eqref{2.1} we deduce
        \begin{align*}
        |M_j(x,y)|&\leq C |x|e^{-c(|x-y|^2+|x||x-y|)}\int_0^\infty \frac{e^{-c(|x-y|^2/t + (1-e^{2t})|x+y|^2)}}{t^{(n+1)/2}}dt \\
        &\leq C (|x+y|+|x-y|)e^{-c(|x-y|^2+|x||x-y|)}\left(\int_0^1 \frac{e^{-c(|x-y|^2/t + t|x+y|^2)}}{t^{(n+1)/2}}dt + e^{-c|x+y|^2} \right)\\
        &\leq C e^{-c(|x-y|^2+|x||x-y|)}\left( \int_0^1 \frac{e^{-c|x-y|^2/t}}{t^{(n+2)/2}}dt +1\right)
        \leq C \frac{e^{-c(|x-y|^2+|x||x-y|)}}{|x-y|^n}, \ x,y\in \R, x\neq y.
        \end{align*}
        Let $i=1,\dots,n.$ For every $x,y\in \R$, $x\neq y$, we have that
        \begin{align*}
            \partial_{x_i}   M_j(x,y)
                =& -\frac{1}{2\sqrt{\pi}}x_j \int_0^\infty \left(\frac{1+e^{-2t}}{1-e^{-2t}}(x_i-y_i)+ \frac{1-e^{-2t}}{1+e^{-2t}}(x_i+y_i)\right) W_t^\LL(x,y) \frac{dt}{\sqrt{t}}, \quad i\neq j,
        \end{align*}
        and
        \begin{align*}
            \partial_{x_j}M_j(x,y)=&\frac{1}{\sqrt{\pi}}\int_0^\infty  \left( 1- x_j \frac{1+e^{-2t}}{2(1-e^{-2t})}(x_j-y_j)- x_j \frac{1-e^{-2t}}{2(1+e^{-2t})}(x_j+y_j) \right) W_t^\LL(x,y) \frac{dt}{\sqrt{t}}.
        \end{align*}
   Hence, by (\ref{2.1}) we get
        \begin{align*}
        \left| \partial_{x_i} M_j(x,y)\right|\leq &C\int_0^\infty \left(1+|x|\frac{|x-y|}{1-e^{-2t}}+(|x+y|+|x-y|)(1-e^{-2t})|x+y|\right) W_t^\LL(x,y)\frac{dt}{\sqrt{t}}\\
        \leq & C\int_0^\infty \frac{e^{-c|x-y|^2/t}e^{-ct}}{(1-e^{-4t})^{(n+2)/2}}\frac{dt}{\sqrt{t}}\leq C\int_0^\infty \frac{e^{-c|x-y|^2/t}}{t^{(n+3)/2}}dt\\
        \leq & \frac{C}{|x-y|^{n+1}}, \quad x,y\in \R, \ x\neq y.
        \end{align*}
        In a similar way we can see that, for every $i=1,\dots, n,$
        $$ \left| \partial_{y_i} M_j(x,y)\right|\leq \frac{C}{|x-y|^{n+1}}, \quad x,y\in \R, \ x\neq y.$$

        According to \eqref{2.1.0} we can write
        $$ T_j(1)(x)=\frac{x_j}{\pi^{(n+1)/2}}\int_0^\infty \left(\frac{e^{-2t}}{1+e^{-4t}} \right)^{n/2} \exp \left( -\frac{1-e^{-4t}}{2(1+e^{-4t})}|x|^2 \right) \frac{dt}{\sqrt{t}}, \quad x\in \R. $$
        It follows that
        $$ |T_j(1)(x)| \leq C|x|\left( \int_0^1 \frac{e^{-ct|x|^2}}{\sqrt{t}}dt+ e^{-c|x|^2}\int_1^\infty e^{-nt} dt \right) \leq C, \quad x\in \R.$$
        Moreover, for every $i=1,\dots,n,$ $ i\neq j,$ we have that
        $$ \partial_{x_i}T_j(1)(x)=-\frac{x_jx_i}{\sqrt{\pi}} \int_0^\infty \frac{1-e^{-4t}}{1+e^{-4t}} W_t^\LL(1)(x) \frac{dt}{\sqrt{t}}, \quad x\in \R,$$
        and
        $$ \partial_{x_j}T_j(1)(x)= \frac{1}{\sqrt{\pi}} \int_0^\infty \left(1-\frac{1-e^{-4t}}{1+e^{4t}}x_j^2\right) W_t^\LL(1)(x) \frac{dt}{\sqrt{t}}, \quad x\in \R.$$
        Then, we can deduce that $\nabla T_j(1)\in L^\infty(\R).$\\

        By \cite[Theorem 1.1]{BCFST} we conclude that $T_j$ can be extended to $BMO_\LL(\R)$ as a bounded operator from $BMO_\LL(\R)$ into itself.
    \end{proof}

    \begin{Prop}\label{Pr3.2}
        Let $j=1,\dots, n.$ Then, the Riesz transforms $R_{j,\pm}^\LL$ can be extended from $L^2(\R) \cap H^1_\LL(\R)$
        to $H^1_\LL(\R)$ as bounded operators from $H^1_\LL(\R)$ into itself.
    \end{Prop}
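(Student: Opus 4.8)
The plan is to combine the atomic characterization of $H^1_\LL(\R)$ from Proposition~\ref{Prop2.3} with the maximal-function description of the same space, following the scheme already used for $\mathcal{G}_{\mathcal{L}+\alpha,\B}$ in Subsection~\ref{subsec:2.2}. Writing $R$ for either $R_{j,+}^\LL$ or $R_{j,-}^\LL$, I would first record the Calder\'on-Zygmund estimates for the kernel $R(x,y)$ of \eqref{Rkern}. Starting from the subordination integral, the identity for $(\partial_{x_j}\pm x_j)W_s^\LL(x,y)$ displayed in Subsection~\ref{subsec:3.1}, and the Mehler bound \eqref{2.1}, one obtains
$$|R(x,y)|\leq C\frac{e^{-c(|x-y|^2+|x||x-y|)}}{|x-y|^n},\qquad |\nabla_x R(x,y)|+|\nabla_y R(x,y)|\leq \frac{C}{|x-y|^{n+1}},\quad x\neq y.$$
These are exactly the bounds exploited in \cite{ST1}, and they are obtained by the same computation carried out for $M_j$ in the proof of Proposition~\ref{Pr3.1} and for $\Omega$ in Subsection~\ref{subsec:3.1}. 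Since $R$ is already bounded on $L^2(\R)$, this makes $R$ a scalar Hermite-Calder\'on-Zygmund operator.

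The first half of the argument is to prove that $R$ maps $H^1_\LL(\R)$ into $L^1(\R)$, for which by Proposition~\ref{Prop2.3} it suffices to bound $\|Ra\|_{L^1(\R)}$ uniformly over the two types of atoms. For a mean-zero atom supported in $B=B(x_0,r_0)$ with $r_0\leq \rho(x_0)/2$, I would split $\R=B^\ast\cup(\R\setminus B^\ast)$, $B^\ast=B(x_0,2r_0)$: on $B^\ast$ I use H\"older's inequality together with the $L^2$-boundedness of $R$, and on $\R\setminus B^\ast$ I use the cancellation of $a$ and the gradient estimate for $R(x,y)$, exactly as in the classical argument. For an atom on a large ball ($\rho(x_0)/2\leq r_0\leq\rho(x_0)$, with no cancellation) I instead exploit the Gaussian factor $e^{-c(|x-y|^2+|x||x-y|)}$ in the size estimate to sum the dyadic annuli around $B$, reproducing the computation \eqref{2.21}--\eqref{2.22}. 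This gives $\|Rf\|_{L^1(\R)}\leq C\|f\|_{H^1_\LL(\R)}$.

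The second half, which is the core of the matter, is to upgrade $L^1$ membership to $H^1_\LL$ membership. By the equivalence $(i)\Leftrightarrow(ii)$ of Proposition~\ref{Prop2.3} it is enough to show that the maximal operator $\mathcal{R}^\ast(f)=\sup_{t>0}|P_t^\LL(Rf)|$ is bounded from $H^1_\LL(\R)$ into $L^1(\R)$. I would treat $\mathcal{R}^\ast$ as a maximal singular integral with kernel family $\Theta(x,y;t)=\int_\R P_t^\LL(x,z)R(z,y)\,dz$ and establish the uniform-in-$t$ estimates
$$\sup_{t>0}|\Theta(x,y;t)|\leq C\frac{e^{-c(|x-y|^2+|x||x-y|)}}{|x-y|^n},\qquad \sup_{t>0}\big(|\nabla_x\Theta(x,y;t)|+|\nabla_y\Theta(x,y;t)|\big)\leq \frac{C}{|x-y|^{n+1}},$$
deducing them from the kernel bounds for $R$, the domination $P_t^\LL(x,z)\leq C\,P_t(x-z)$ recorded in \eqref{2.12}, and the inequality $|x-z|^2+|z-y|^2\geq \tfrac12|x-y|^2$ that propagates the Gaussian decay through the $z$-integration. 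The $L^2$-boundedness of $\mathcal{R}^\ast$ comes for free, since $\mathcal{R}^\ast(f)\leq C\,M(Rf)$ with $M$ the Hardy-Littlewood maximal operator, and $R$ is $L^2$-bounded. With these ingredients in hand the same two-atom computation as in the first half applies, yielding $\|\mathcal{R}^\ast f\|_{L^1(\R)}\leq C\|f\|_{H^1_\LL(\R)}$ on atoms and hence on all of $H^1_\LL(\R)$.

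Combining the two halves through Proposition~\ref{Prop2.3} gives $Rf\in H^1_\LL(\R)$ with $\|Rf\|_{H^1_\LL(\R)}\leq C\|f\|_{H^1_\LL(\R)}$ for every finite linear combination of atoms; since such combinations form a dense subspace of $H^1_\LL(\R)$ contained in $L^2(\R)$ and $R$ already maps $H^1_\LL(\R)$ boundedly into $L^1(\R)$, $R$ extends to a bounded operator from $H^1_\LL(\R)$ into itself, exactly as at the end of Subsection~\ref{subsec:2.2}. The step I expect to be the main obstacle is the uniform-in-$t$ smoothness estimate for the composed kernel $\Theta(x,y;t)$: controlling $\nabla_{x,y}P_t^\LL R(x,y)$ requires pairing the derivative bound \eqref{derivW} for the Mehler kernel with the subordination integral defining $R$ while simultaneously retaining the Gaussian decay and the sharp exponent $n+1$, and this is the one calculation that does not reduce verbatim to an estimate already displayed in the paper.
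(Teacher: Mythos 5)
Your first half (the bound $\|Rf\|_{L^1(\R)}\leq C\|f\|_{H^1_\LL(\R)}$ via the two types of atoms, the $L^2$-boundedness, and the Gaussian decay of the kernel to handle the non-cancellative atoms) is sound and is essentially what the paper does. The genuine gap is in your second half. You propose to control $\mathcal{R}^\ast(f)=\sup_{t>0}|P_t^\LL(Rf)|$ by writing the composed kernel as $\Theta(x,y;t)=\int_\R P_t^\LL(x,z)R(z,y)\,dz$ and deducing uniform-in-$t$ size and smoothness bounds from $P_t^\LL(x,z)\leq C\,t(t+|x-z|)^{-(n+1)}$ together with the pointwise bounds on $R(z,y)$. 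This integral is not absolutely convergent: $|R(z,y)|\sim|z-y|^{-n}$ is \emph{not} locally integrable near $z=y$, so $\int_\R P_t^\LL(x,z)|R(z,y)|\,dz=\infty$ for every $x$, $y$, $t$. The composed kernel exists only through the cancellation of the singular kernel $R(z,y)$ (principal values), and neither the claimed size estimate $\sup_t|\Theta(x,y;t)|\leq Ce^{-c(\cdot)}|x-y|^{-n}$ nor the gradient estimate can be obtained by the majorization you describe. You flag the smoothness estimate as ``the main obstacle,'' but in fact already the size estimate fails by this route; your plan as written does not go through.

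The paper avoids the composition problem entirely by a spectral-shift intertwining, which is the idea missing from your proposal. Since $(\partial_{x_j}+x_j)h_k=\sqrt{2k_j}\,h_{k-e_j}$ and $h_{k-e_j}$ is an eigenfunction of $\LL+2$ with eigenvalue $2|k|+n$, one has the exact identity $P_t^{\LL+2}\bigl(R_{j,+}^\LL f\bigr)=R_{j,+}^\LL\bigl(P_t^\LL f\bigr)$ (this is why Proposition~\ref{Prop2.4} is proved for the shifted semigroups $P_t^{\LL+\alpha}$: the relevant maximal function is taken with $\alpha=2$, not with $P_t^\LL$ as in your use of Proposition~\ref{Prop2.3}). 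The composed operator then has the completely explicit kernel
$$\mathcal{C}(x,y,t)=\frac{1}{\sqrt{\pi}}\int_0^\infty\bigl(\partial_{x_j}+x_j\bigr)P_{t+s}^\LL(x,y)\,ds,$$
in which the shift $t+s$ inside the subordination integral regularizes everything: by \eqref{Omega} one gets the absolutely convergent, uniform-in-$t$ bounds \eqref{3.5} and \eqref{3.6} with values in $L^\infty(0,\infty)$, and Banach-valued Calder\'on--Zygmund theory (with target $C_0([0,\infty))$) plus the atom argument of Subsection~\ref{subsec:2.2} finishes the proof. If you want to salvage your variant with $P_t^\LL$ itself, you would need a Cotlar-type argument for the maximal truncated singular integral rather than naive kernel composition; as proposed, the step is broken.
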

    \begin{proof}
        We study the operator $R_{j,+}^\LL.$ $R_{j,-}^\LL$ can be analyzed in a similar way.\\

        By taking in mind Proposition~\ref{Prop2.4} it is enough to see that
        $R_{j,+}^\LL$ can be extended as a bounded operator from $H_\LL^1(\R)$ into $L^1(\R)$
        and that the operator $G$  defined by
        $$ G(f)(x,t)=P_t^{\LL+2}(R_{j,+}^\LL(f))(x),$$
        is bounded from $H_\LL^1(\R)$ into $L^1(\R,L^\infty(0,\infty))$.\\

        In \cite[Theorem 3.3]{ST1} it was proved that the Riesz transform $R_{j,+}^\LL$ is a Calder\'on-Zygmund operator associated to the kernel
        given in \eqref{Rkern}. Thus, in order to see that $R_{j,+}^\LL$ can be extended as a bounded operator from $H_\LL^1(\R)$ into $L^1(\R)$,
        we only need to show that
        \begin{equation}\label{Hbound}
            | R_{j,+}^\LL(x,y) |
                \leq C \frac{e^{-c(|x-y|^2 + |y||x-y|)}}{|x-y|^n}, \quad x,y \in \R, \ x \neq y.
        \end{equation}
        and then reasoning as in the end of Subsection~\ref{subsec:2.2}. Estimation (\ref{Hbound}) follows from (\ref{Omega}).\\

        Now we establish that $G$ can be extended as a bounded operator from $H_\LL^1(\R)$ into $L^1(\R,L^\infty(0,\infty))$. We observe that
        \begin{align*}
        G(f)(x,t)=&\sum_{k\in \mathbb{N}^n} \sqrt{\frac{2k_j}{2|k|+n}} e^{-t\sqrt{2|k|+n}}\langle f, h_k \rangle h_{k-e_j}(x)
        =R_{j,+}^\LL(P^\LL_t(f))(x), \quad f\in L^2(\R).
        \end{align*}

        We consider the function
        $$ \mathcal{C}(x,y,t)=\frac{1}{\sqrt{\pi}}\int_0^\infty \left( \partial_{x_j}+x_j\right)P_{t+s}^\LL(x,y)ds, \quad x,y\in \R \text{ and } t\in (0,\infty).$$
        This function $\mathcal{C}$ satisfies the following $L^\infty(0,\infty)-$Hermite-Calder\'on-Zygmund conditions:
        \begin{equation}\label{3.5}
        \|\mathcal{C}(x,y,\cdot)\|_{L^\infty(0,\infty)}\leq C \frac{e^{-c( |x-y|^2 +   |y||x-y|)}}{|x-y|^n}, \quad x,y\in \R, \ x\neq y.
        \end{equation}
        and
        \begin{equation}\label{3.6}
        \| \nabla_x \mathcal{C}(x,y,\cdot)\|_{L^\infty(0,\infty)}+\| \nabla_y \mathcal{C}(x,y,\cdot)\|_{L^\infty(0,\infty)}\leq \frac{C}{|x-y|^{n+1}}, \quad x,y\in \R, \ x\neq y.
        \end{equation}
        Indeed, by  \eqref{Omega} it follows that
        \begin{align*}
            \|\mathcal{C}(x,y,\cdot)\|_{L^\infty(0,\infty)}
                \leq & e^{-c(|x-y|^2+|y||x-y|)} \sup_{t>0}\int_0^\infty \frac{t+s}{(t+s+|x-y|)^{n+2}} ds
                \leq  C \sup_{t>0} \frac{e^{-c(|x-y|^2+|y||x-y|)}}{(t+|x-y|)^n} \\
                \leq  & C \frac{e^{-c(|x-y|^2+|y||x-y|)}}{|x-y|^n},
                \quad x,y\in \R, \ x \neq y.
        \end{align*}
        In order to show \eqref{3.6} we can proceed in a similar way.\\

        Suppose now that $f\in C_c^\infty(\R).$ We can write
        $$ G(f)(x,t)=\int_\R \mathcal{C}(x,y,t) f(y) dy, \quad x\not \in \supp f \text{ and } t>0.$$
        Let $x\not \in \supp f.$ Note that, for every $y\in \R,$ the function $g_{x,y}(t)=\mathcal{C}(x,y,t)f(y),$ $t\in(0,\infty)$
        is continuous, $\displaystyle \lim_{t\rightarrow \infty} g_{x,y}(t)=0$,  and there exists the limit $\displaystyle \lim_{t\rightarrow 0^+} g_{x,y}(t)$.\\

        We denote by $C_0([0,\infty))$ the space of continuous functions on $[0,\infty)$ that converge to zero in infinity. $C_0([0,\infty))$
        is endowed with the supremum norm. The dual space of $C_0([0,\infty))$ is the space of complex measures $\mathcal{M}([0,\infty))$ on $[0,\infty).$\\

        By \eqref{3.5} we have that $\int_\R \|\mathcal{C}(x,y,\cdot)\|_{L^\infty(0,\infty)}|f(y)|dy <\infty.$ We define
        $$ L_x(f)=\int_\R\mathcal{C}(x,y,\cdot)f(y)dy,$$
        where the last integral is understood in the $C_0([0,\infty))$-Bochner sense. Let $\mu\in \mathcal{M}([0,\infty)).$ We can write
        \begin{align*}
            \langle \mu ,L_x (f) \rangle_{\mathcal{M}([0,\infty)), C_0([0,\infty))}
                =&\int_{[0,\infty)}L_x(f)(s) d\mu(s)
                = \int_\R \int_{[0,\infty)} \mathcal{C}(x,y,s) d\mu(s) f(y) dy \\
                =& \int_{[0,\infty)} \int_\R \mathcal{C}(x,y,s) f(y)dy d\mu(s).
        \end{align*}
        Then,
        $$ L_x(f)(t)=\int_\R \mathcal{C}(x,y,t)f(y)dy, \quad t\in [0,\infty),$$
        and we conclude that
        $$ G(f)(x,t)=\left( \int_\R \mathcal{C}(x,y,\cdot) f(y)dy\right)(t), \quad t\in(0,\infty),$$
        where the integral is understood in the $C_0([0,\infty))$-Bochner sense.\\

        $R_{j,+}^\LL$ is bounded from $L^2(\R)$ into itself. Moreover, the maximal operator
        $$ P_*^{\LL + 2}(g)=\sup_{t>0} \left|P_t^{\LL+2}(g) \right|$$
        is also bounded from $L^2(\R)$ into  itself. Hence, $G$ is bounded operator from $L^2(\R)$ into $L^2(\R,$ $L^\infty(0,\infty)).$\\

        According to Banach valued Calder\'on-Zygmund theory we deduce that $G$ can be extended to $L^1(\R)$ as a bounded operator from $L^1(\R)$
        into $L^{1,\infty}(\R, L^\infty(0,\infty))$ and from $H^1(\R)$ into $L^1(\R, L^\infty(0,\infty)).$\\

        By proceeding now as in the final part of Subsection~\ref{subsec:2.2}, \eqref{3.5} allows us to conclude that $G$ can be extended to $H^1_\LL(\R)$
        as a bounded operator from $H^1_\LL(\R)$ into $L^1(\R, L^\infty(0,\infty))).$
        We denote by $\widetilde{G}$ to this extension.\\

        Suppose that $f\in H^1_\LL(\R).$ There exist a sequence $(a_j)_{j\in \mathbb{N}}$ of atoms for $H^1_\LL(\R)$ and a
        sequence $(\lambda_j)_{j\in \mathbb{N}}$ of complex numbers such that $\sum_{j=1}^\infty|\lambda_j| <\infty $
        and $f=\sum_{j=1}^\infty \lambda_j a_j.$ Since this series converge in $H^1_\LL(\R),$ we have that
        $$ \widetilde{G}(f)=\sum_{j=1}^\infty \lambda_j G(a_j), \quad \text{ in } L^1(\R, L^\infty(0,\infty)).$$
        Then, for every $t>0,$
        $$ \widetilde{G}(f)(\cdot,t)=\sum_{j=1}^\infty \lambda_j G(a_j)(\cdot,t)= \sum_{i=1}^\infty \lambda_i P_t^{\LL + 2}(R_{j,+}^\LL(a_i)), \quad \text{ in } L^1(\R).$$
        Moreover, for every $t>0,$
        $$ P_t^{\LL+2}R_{j,+}^\LL(f)=\sum_{i=1}^\infty \lambda_i P_t^{\LL +2}(R_{j,+}^\LL(a_i)), \quad \text{ in } L^1(\R).$$
        We conclude that
        $$ \widetilde{G}(f)(\cdot,t)=P_t^{\LL+2}(R_{j,+}^\LL(f)), \quad t>0,$$
        and the proof of this property is finished.
    \end{proof}


\end{document}